\documentclass[a4paper]{amsart}
\usepackage[top=3cm, bottom=3cm, left=3cm, right=3cm]{geometry}

\usepackage{enumerate}
\usepackage{amsmath}
\usepackage{amssymb}
\usepackage{amsthm}

\usepackage{lpic}
\newlength{\figurewidth}
\newlength{\annotationwidth}

\usepackage{math}
\usepackage{thm}
\usepackage{ref}

\title
	[Dynamic Outflow Boundary Conditions]
	{Strong Well-Posedness for a Class of Dynamic Outflow Boundary Conditions for Incompressible Newtonian Flows}

\dedicatory
	{Dedicated to Jan Pr{\"u}ss on the occasion of his $65^{\textrm th}$ birthday.}

\author
	[Dieter Bothe]
	{Dieter Bothe}
	
\address
	{Center of Smart Interfaces \newline\indent
	 Technische Universit{\"a}t Darmstadt, \newline\indent
	 Alarich-Weiss-Str.~10, D-64287 Darmstadt, Germany}
	
\email
	{bothe@csi.tu-darmstadt.de}

\author
	[Takahito Kashiwabara]
	{Takahito Kashiwabara}

\address
	{Department of Mathematics \newline\indent
	Tokyo Institute of Technology \newline\indent
	2-12-1 Ookayama, Meguro, Tokyo 152-8511, Japan}

\email
	{tkashiwa@math.titech.ac.jp}

\author
	[Matthias K{\"o}hne]
	{Matthias K{\"o}hne}

\address
	{Mathematisches Institut \newline\indent
	 Heinrich-Heine-Universit{\"a}t D{\"u}sseldorf, \newline\indent
	 Universit{\"a}tsstr.~1, D-40225 D{\"u}sseldorf, Germany}
	
\email
	{koehne@math.uni-duesseldorf.de}

\keywords
	{artificial boundary condition,
   dynamic boundary condition,
	 incompressible Newtonian flows,
	 Navier-Stokes equations,
	 Stokes equations,
	 local-in-time well-posedness,
	 maximal regularity}

\subjclass
	[2010]
	{Primary: 35Q30; Secondary: 35S30, 76D03, 76D07}

\date
	{\today}

\begin{document}
\begin{abstract}
	Based on energy considerations, we derive a class of dynamic outflow boundary conditions for the incompressible Navier-Stokes equations,
	containing the well-known convective boundary condition but incorporating also the stress at the outlet.
	As a key building block for the analysis of such problems, we consider the Stokes equations with such dynamic outflow boundary conditions
	in a halfspace and prove the existence of a strong solution in the appropriate Sobolev-Slobodeckij-setting with $L_p$ (in time and space) as the base space for the momentum balance.
	For non-vanishing stress contribution in the boundary condition, the problem is actually shown to have $L_p$-maximal regularity under the natural compatibility conditions.
	Aiming at an existence theory for problems in weakly singular domains, where different boundary conditions apply on different parts of the boundary such that these surfaces meet orthogonally,
	we also consider the prototype domain of a wedge with opening angle $\frac{\pi}{2}$ and different combinations of boundary conditions:
	Navier-Slip with Dirichlet and Navier-Slip with the dynamic outflow boundary condition.
	Again, maximal regularity of the problem is obtained in the appropriate functional analytic setting and with the natural compatibility conditions.
\end{abstract}
\setlength{\figurewidth}{10cm}
\setlength{\annotationwidth}{\textwidth-\figurewidth-1cm}
\renewcommand{\baselinestretch}{1.125}
\normalsize
\maketitle

\section*{Introduction}
In the numerical modeling of fluid flows from real world applications it is often not possible to model the complete flow domain up to physical boundaries.
Instead, artificial boundaries usually need to be introduced into the problem description.
In such cases the formulation of sensible boundary conditions, so-called artificial boundary conditions (ABCs, for short),
is a non-trivial task since the flow can enter and, more problematic, leave the domain through open parts of the boundary.
We speak of an ``outflow boundary'' if the mean flow points outwards, while locally a backflow -- with fluid entering the domain -- is allowed.
One important class of ABCs at such outflow boundaries are ``convective'' boundary conditions like
\begin{equation}
	\eqnlabel{convectiveBC}
	\partial_t \phi + (a \cdot \nabla) \phi = 0
\end{equation}
with a prescribed velocity $a$, where $\phi$ denotes a transported quantity, say a velocity component.
Such dynamic ABCs are known since long in the area of hyperbolic problems, also called Sommerfeld radiation condition in this context.
While $a$ usually denotes the phase velocity of the waves, which is hard to be known a priori,
Orlanski used a local velocity $a$ in his numerical studies in \cite{Orlanski:Simple-Boundary-Condition}.
In \cite{Halpern:ABC-Advection-Diffusion}, using Fourier techniques and approximations in the transformed space similar to \cite{Engquist:Absorbing-Boundary-Conditions},
the convective ABC above was derived as an approximation to the non-local exact boundary condition for a linear advection-diffusion equation.
In \cite{Halpern:ABC-Incompressible-Viscous-Flows}, different approximations to the symbol of the exact boundary operator for the linearized incompressible Navier-Stokes equations have been derived,
but these approximations often lead to non-local boundary conditions.
One local condition given there for 2D flow is the combination of \eqnref{convectiveBC} for the normal velocity component
with a homogeneous Neumann condition for the tangential part.
The full incompressible Navier-Stokes equations are also treated in \cite{Jin-Braza:Non-Reflecting-Outlet-Condition},
where the resulting ABC is chosen to contain an additional (viscous) diffusion term acting in the tangential direction.

Since the derivation of local ABCs of convective type are not strictly feasible for the incompressible Navier-Stokes or Stokes equations,
we adopt a different approach based on energy considerations, somewhat in the spirit of \cite{Bothe-Koehne-Pruess:Energy-Preserving-Boundary-Conditions}.
These considerations also motivate the incorporation of additional stress terms and, moreover, lead to several variants of such dynamic outflow boundary conditions.
Since it is very important also for the numerical applications that the chosen boundary conditions lead to wellposed initial-boundary-value problems,
the main focus of the present work is the analysis of the resulting PDE system concerning the local-in-time wellposedness
in appropriate Bessel potential and Sobolev-Slobodeckij spaces.
To our knowledge, at least in the context of strong solutions such an analysis has not been done so far.
But let us note that for other classes of ABCs, also employed at outflow boundaries, some analytical results are known;
cf.\ \cite{Bothe-Koehne-Pruess:Energy-Preserving-Boundary-Conditions} and the references given there.

Let us finally note that in the numerical description of real world flow problems,
the computational domains usually contain edges at which different boundary conditions meet.
Such mixed-type initial-boundary-value problems for the Navier-Stokes or Stokes equations in singular domains are very challenging concerning,
e.g., their rigorous analysis.
In some prototype cases, like a flow in a system of pipes, the flow domain can be chosen such that it is only weakly singular,
meaning that if different boundary parts meet at a common edge, they locally form a right angle there. This is illustrated in Figure~1.
\begin{figure}[t]
	\vspace*{-0.5cm}
	\begin{minipage}[t]{\figurewidth}
		\rotatebox{-90}{
			\begin{lpic}{Y-Tube-Figure(,10cm)}
				\lbl{90,7.5,90;$\Gamma^{\textrm{in}}$}
				\lbl{45,50,90;$\Gamma^{\textrm{wall}}$}
				\lbl{27.5,127.5,90;$\Gamma$}
				\lbl{47.5,162.5,90;$\Gamma$}
			\end{lpic}
		}
	\end{minipage}
	\hspace*{\fill}
	\begin{minipage}[t]{\annotationwidth}
		\vspace*{1cm}
		Figure 1: Example for a weakly singular domain; here: a smooth tube with one inlet $\Gamma^{\textrm{in}}$ and two outlets $\Gamma \ (= \Gamma^{\textrm{out}})$.
		The lateral boundary is denoted by $\Gamma^{\textrm{wall}}$.
		The arrows indicate the principal flow direction.
	\end{minipage}
\end{figure}

There, the flow enters the domain via an inlet, while two outlets are available for the fluid to leave the domain.
All in-/outlets are ``connected'' by an impermeable wall which forms the lateral boundary of the tube.
Such a tube is a typical example of a weakly singular domain $\Omega \subseteq \bR^n$,
whose boundary may be decomposed into several smooth parts that meet each other orthogonally.
For the example in Figure 1, the smooth parts of the boundary are the inlet $\Gamma^{\textrm{in}}$,
the lateral boundary of the tube $\Gamma^{\textrm{wall}}$, and the outlets $\Gamma$.

For the right combinations of boundary conditions, such weakly singular domains can be treated for a variety of admissible boundary conditions
as has been shown in \cite{Koehne:Incompressible-Newtonian-Flows}.
The key model problem required to be treatable for such weakly domains are the corresponding PDE systems in a wedge of opening angle $\frac{\pi}{2}$.
For this reason, the analysis for this prototype geometry is included in the present paper.

\section{Dynamic Outflow Boundary Conditions}
\seclabel{Modeling}
We aim at deriving physically meaningful boundary conditions at outflow boundaries
which render the artificial boundary transparent in the sense that the boundary condition does not introduce unphysical dissipation into the system.
While our motivation mainly stems from so-called non-reflecting boundary conditions developed for partial differential equations
of hyperbolic character such as wave equations or compressible flows, their derivation requires a different approach
because we aim at applications to flow problems for incompressible Newtonian fluids.
The basic idea of our derivation is the preservation of kinetic energy in the following sense:
if an outflow boundary $\Gamma^{\rm out}$ is observed at arbitrary time $t = t_0$,
the (infinitesimally thin) layer of fluid exiting the flow domain $\Omega$ at this time instant should not endure a change of its kinetic energy.
In mathematical terms this means that
\begin{equation}
	\label{outflow-dissipation1}
	\frac{\mbox{d}}{\mbox{d}t} \!\! \int_{\Gamma^{\rm out}} \!\! \rho\,\frac{v(t,x(t;\,t_0,\,x_0))^2}{2}\,\mbox{d}\sigma(x_0)_{\,|t = t_0} = 0,
\end{equation}
where $\rho$ is the constant mass density and $x(\cdot) = x(\cdot;\,t_0,\,x_0)$ denotes the unique solution of
\begin{equation}
	\label{equation-of-motion}
	\dot{x}(t) = v(t,\,x(t)), \quad x(t_0) = x_0.
\end{equation}
Let us note that the rate of change
of kinetic energy given by the left-hand side of \eqref{outflow-dissipation1} is, in general, not the same as
\begin{equation}
	\label{outflow-dissipation2}
	\frac{\mbox{d}}{\mbox{d}t} \!\! \int_{\Sigma(t)} \!\! \rho \,\frac{v(t,\,x)^2}{2}\,\mbox{d}\sigma(x)_{\,|t = t_0} = 0,
\end{equation}
where $\Sigma(t)$ is the surface composed of all fluid particles at time $t$ which exit through $\Gamma^{\rm out}$ at $t_0$.
The reason are the different surface measures in \eqref{outflow-dissipation1}, resp.\ \eqref{outflow-dissipation2}.
To decide which expression is the physically correct one,
notice first that the integral in \eqref{outflow-dissipation1} stands for a thin layer of fluid of a given constant thickness $\delta > 0$, say,
since kinetic energy is stored in the mass of the fluid which requires a volume instead of an area to support for it.
If this sheet of fluid is traced backwards along the flow trajectories, the thickness as well as the local surface area change.
While the different surface measure in \eqref{outflow-dissipation2} accounts for the local area changes,
the effect of a changing thickness is not included.
On the other side, the surface measure in \eqref{outflow-dissipation1} assigns to a fluid parcel with base area
$\mbox{d}\sigma$ and thickness $\delta$ a constant weight during its motion, as it should be due to the conserved volume
corresponding to vanishing divergence of the velocity field.

Notice that \eqref{equation-of-motion} typically is an end value instead of an initial value problem,
since the right-hand side in \eqref{equation-of-motion} is only defined for $t\leq t_0$ if the fluid trajectory is leaving the domain.
But for a bounded and locally Lipschitz velocity field $v$,
say, an extension of $v$ with the same regularity to a neighborhood of $\Omega$ is possible
such that \eqref{equation-of-motion} then has unique solutions at least on $(t_0-\epsilon,t_0+\epsilon)$ for some $\epsilon>0$.
Then the derivative in \eqref{outflow-dissipation1} is also well-defined if the fluid locally enters the domain via $\Gamma^{\rm out}$.
Computing the derivative in \eqref{outflow-dissipation1} yields
\begin{equation}
	\label{outflow-diss2}
	\int_{\Gamma^{\rm out}} \!\! v(t_0,\,x_0) \cdot \rho \big( \partial_t v(t_0,\,x_0) + \nabla_x v(t_0,\,x_0) \cdot v(t_0,\,x_0) \big)\,\mbox{d}\sigma(x_0) = 0.
\end{equation}
Since this should hold for any time and any velocity field, the only appropriate local condition to assure \eqref{outflow-diss2} is the condition
\begin{equation*}
	\label{outflow-BC0}
	v \cdot \big( \partial_t v  + ( v \cdot \nabla) v \big) =0 \quad \mbox{ on } \Gamma^{\rm out}.
\end{equation*}
Evidently, the dynamic boundary condition
\begin{subequations}
\begin{equation}
	\label{outflow-DBC1}
	\partial_t v + (v \cdot \nabla) v = 0 \quad \mbox{ on } \Gamma^{\rm out}
\end{equation}
on the full velocity is sufficient for this to hold.
In cases when it is reasonable to assume the outgoing flow to be perpendicular to the outflow boundary,
the dynamic condition only needs to hold for the normal velocity component, i.e.\ the following variant is also sufficient:
\begin{equation}
	\label{outflow-DBC2}
	P_\Gamma v = 0, \qquad (\partial_t v + (v \cdot \nabla) v) \cdot \nu = 0 \quad \mbox{ on } \Gamma^{\rm out},
\end{equation}
where $\nu: \Gamma \longrightarrow \bR^n$ denotes the outer unit normal field at $\Gamma^{\rm out}$
and $P_\Gamma := 1 - \nu \otimes \nu$ denotes the projection onto the tangent bundle.
Another variant describes the normal (outgoing, say) velocity component and imposes only the tangential part of the dynamic condition, i.e.\
\begin{equation}
\label{outflow-DBC-prescribed-outflow}
	v \cdot \nu = v^{\rm out}(t,\,x) \cdot \nu, \qquad
	P_\Gamma  (\partial_t v + (v \cdot \nabla) v)  = 0 \quad \mbox{ on } \Gamma^{\rm out}
\end{equation}
\end{subequations}
with a given outflow velocity $v^{\rm out}(t,\,x)$.

These dynamic ABCs are nonlinear boundary conditions which, in particular for numerical purpose,
might be approximated by the linearized versions.
For example, the linearized version of \eqref{outflow-DBC1} reads as
\begin{subequations}
\begin{equation}
	\label{outflow-DBC3}
	\partial_t v + (v^{\rm out}(t,\,x) \cdot \nabla) v = 0 \quad \mbox{ on } \Gamma^{\rm out}
\end{equation}
with a given outflow velocity $v^{\rm out}(t,\,x)$.
In practice, the latter velocity will also be unknown,
but certain additional assumptions may be reasonable like perpendicular outflow velocity.
Then \eqref{outflow-DBC3} becomes
\begin{equation}
	\label{outflow-DBC4}
	\partial_t v + V(t,\,x) \partial_\nu v = 0 \quad \mbox{ on } \Gamma^{\rm out}
\end{equation}
\end{subequations}
with a scalar function $V(t,\,x)$ which is assumed to be known.
In case the mean flow across the outflow boundary is known,
it is of special interest to consider \eqref{outflow-DBC4} with $V(t,\,x) \equiv V^{\rm out}$,
where $V^{\rm out}$ is either constant or a known function of time.

At this point it is important to mention that the analysis below will also show that the Stokes problem
in a half-space together with the dynamic ABC \eqref{outflow-DBC3}, or even \eqref{outflow-DBC4},
is not well-posed in the considered Sobolev-Slobodeckij-setting; cf.\ \Remref{Normal-Pressure-Effect}.
Therefore, an appropriate modification of this condition is required.

For this purpose, recall first that the kinetic energy
\begin{equation*}
	\label{Ekin}
	E_{\rm kin}:=\int_\Omega \rho \frac{v^2}{2}\,\mbox{d}x
\end{equation*}
contained in the full domain changes at the rate
\begin{equation*}
	\dot{E}_{\rm kin} = - 2 \eta \int_\Omega D : \nabla v\,\mbox{d}x + \int_\Omega \rho b \cdot v\,\mbox{d}x
		+ \int_{\partial \Omega} v \cdot S \nu\,\,\mbox{d}\sigma - \int_{\partial \Omega} \rho \frac{v^2}{2} v \cdot \nu\,\mbox{d}\sigma,
\end{equation*}
where $\eta$ is the dynamic viscosity of the fluid,
$D = \frac{1}{2}(\nabla v + \nabla v^\sfT)$ denotes the symmetric velocity gradient, $S = 2 \eta D - pI$ denotes the stress tensor,
$p$ is the pressure, and $b$ are the body force densities.
We decompose the full boundary into disjoint parts according to $\partial \Omega = \Gamma^{\rm in} \cup \Gamma^{\rm wall} \cup \Gamma^{\rm out}$,
where we assume that $v \cdot S\nu = 0$ on $\Gamma^{\rm wall}$.
Hence, we obtain
\begin{align}
	\dot{E}_{\rm kin} = & - 2 \eta \int_{\Omega} D : \nabla v\,\mbox{d}x + \int_{\Omega} \rho b \cdot v\,\mbox{d}x \nonumber \\[1ex]
		& - \int_{\Gamma^{\rm in}} \rho \frac{v^2}{2} v \cdot \nu\,\mbox{d}\sigma + \int_{\Gamma^{\rm in}} v \cdot S \nu\,\mbox{d}\sigma
			- \int_{\Gamma^{\rm out}} \rho \frac{v^2}{2} v \cdot \nu\,\mbox{d}\sigma + \int_{\Gamma^{\rm out}} v \cdot  S \nu\,\mbox{d}\sigma \nonumber
\end{align}
as the  rate of change of this energy functional.
On the boundaries, the terms with $\rho v^2/2$ describe convective in- and output to the open domain $\Omega$,
hence are not related to dissipation.
Therefore, the condition for a non-dissipative outflow boundary becomes
\begin{equation*}
	v \cdot S \nu = 0 \quad \mbox{ on } \Gamma^{\rm out},
\end{equation*}
which is satisfied if, e.g., the homogeneous Neumann condition holds, i.e.\ $S\nu=0$ on $\Gamma^{\rm out}$.
Other variants, analogous to the variants above,
are $P_\Gamma v = 0$ and $S\nu \cdot \nu = 0$ or $v \cdot \nu = 0$ and $P_\Gamma S \nu = 0$.
These boundary conditions are natural conditions in the sense that they eliminate the corresponding boundary
term in the variational formulation. In a Finite Element context, the omission of the boundary term is also refered
to as the ``do-nothing condition''; see \cite{Heywood-Rannacher-Turek:Artificial-Boundaries}.
Let us also note that well-posedness as well as $L_p$-maximal regularity are known for the Stokes and for the
Navier-Stokes equations with Neumann boundary condition;
see the remarks and references in \cite{Bothe-Koehne-Pruess:Energy-Preserving-Boundary-Conditions, Koehne:Incompressible-Newtonian-Flows}.

At this point, we have two different sets of artificial boundary conditions, which are all motivated from energy considerations.
Somewhat similar to the Robin boundary condition as a linear combination of a Dirichlet and a
Neumann condition, we consider the following types of dynamics outflow boundary conditions, obtained by linear combination
of a convective-type linearized dynamic condition and the corresponding variant of the Neumann-type condition:
The fully dynamic condition
\begin{subequations}
\begin{equation}
	\label{outflow-DBC5}
	\alpha (\partial_t v + (v^{\rm out}(t,\,x) \cdot \nabla) v) + S \nu = 0 \quad \mbox{ on } \Gamma^{\rm out},
\end{equation}
the normally dynamic variant
\begin{equation}
	\label{outflow-DBC6}
	P_\Gamma v = 0, \qquad \alpha (\partial_t v + (v^{\rm out}(t,\,x) \cdot \nabla) v) \cdot \nu + S \nu \cdot \nu = 0 \quad \mbox{ on } \Gamma^{\rm out}
\end{equation}
and the tangentially dynamic variant
\begin{equation}
	\label{outflow-DBC7}
	v \cdot \nu = 0, \qquad \alpha P_\Gamma  (\partial_t v + (v^{\rm out}(t,\,x) \cdot \nabla) v) + P_\Gamma S \nu = 0 \quad \mbox{ on } \Gamma^{\rm out}.
\end{equation}
\end{subequations}
In all three ABCs above, $\alpha>0$ is a model parameter.
Let us note in passing that the new ABCs \eqref{outflow-DBC5}-\eqref{outflow-DBC7} could also be derived
directly from a combined energy functional. In this case, also the nonlinear variants with $v$ instead of
$v^{\rm out}(t,\,x)$ would be reasonable choices.

\subsection*{A Complete Model}
We now pass to the dimensionless form, writing $u$ for the non-dimensional velocity.
Moreover, in order to economize the notation we write $\Gamma = \Gamma^{\rm out}$ for an outflow boundary as in Figure~1.
This yields
\begin{equation*}
	\eqnlabel{Interior}\tag*{$(\textrm{\upshape NS})^f_{\textrm{\upshape Re}}$}
	\begin{array}{rclll}
		\partial_t u + (u \cdot \nabla u) - \frac{1}{\textrm{Re}} \Delta u + \nabla p & = & f & \quad \mbox{in} & J \times \Omega, \\[0.5em]
		                                                                \mbox{div}\,u & = & 0 & \quad \mbox{in} & J \times \Omega
	\end{array}
\end{equation*}
as the well-known dimensionless form of the Navier-Stokes equation inside the domain.
Here $J := (0,\,a)$ with $a > 0$ denotes the time interval within which the flow is to be modeled,
and ${\rm Re} > 0$ is the Reynolds number.
At the outflow boundary, we first record the full dynamic outflow boundary condition, i.e.
\begin{equation*}
	\alpha (\partial_t u + (v^{\textrm{out}} \cdot \nabla) u) + S \nu = 0 \quad \mbox{on} \ J \times \Gamma.
\end{equation*}
Since the normal and the tangential parts are treated differently below, we also write the full dynamic outflow condition in the form
\begin{equation*}
	\eqnlabel{Fully-NRBC}\tag*{$(\textrm{FDO})^{v^{\textrm{out}}}_{\alpha, \textrm{\upshape Re}}$}
		\begin{array}{rclll}
		       \alpha P_\Gamma (\partial_t u + (v^{\textrm{out}} \cdot \nabla) u) + \frac{2}{\textrm{Re}} P_\Gamma D \nu & = & 0 & \quad \mbox{on} & J \times \Gamma, \\[0.5em]
		\alpha (\partial_t u + (v^{\textrm{out}} \cdot \nabla) u) \cdot \nu + \frac{2}{\textrm{Re}} D \nu \cdot \nu -  p & = & 0 & \quad \mbox{on} & J \times \Gamma.
	\end{array}
\end{equation*}
A variant of this ABC imposes the dynamic condition on the normal part, only, and reads as
\begin{equation*}
	\eqnlabel{Normal-NRBC}\tag*{$(\textrm{NDO})^{v^{\textrm{out}}}_{\alpha, \textrm{\upshape Re}}$}
	\begin{array}{rclll}
		                                                                                                     P_\Gamma u & = & 0 & \quad \mbox{on} & J \times \Gamma, \\[0.5em]
		\alpha (\partial_t u + (v^{\textrm{out}} \cdot \nabla) u) \cdot \nu + \frac{2}{\textrm{Re}} D \nu \cdot \nu - p & = & 0 & \quad \mbox{on} & J \times \Gamma.
	\end{array}
\end{equation*}
Finally, there is a third version which imposes the dynamic condition on the tangential component and reads as
\begin{equation*}
	\eqnlabel{Tangential-NRBC}\tag*{$(\textrm{TDO})^{v^{\textrm{out}}}_{\alpha, \textrm{\upshape Re}}$}
	\begin{array}{rclll}
		\alpha P_\Gamma (\partial_t u + (v^{\textrm{out}} \cdot \nabla) u) + \frac{2}{\textrm{Re}} P_\Gamma D \nu & = & 0 & \quad \mbox{on} & J \times \Gamma, \\[0.5em]
		                                                                                              u \cdot \nu & = & 0 & \quad \mbox{on} & J \times \Gamma.
	\end{array}
\end{equation*}
Note that the homogeneous version of the boundary condition above actually assumes an impermeable boundary,
but the theorems to follow treat the nonhomogeneous case as well.
For the last ABC, this means a prescribed outgoing normal velocity component.
In all boundary conditions above, we assume the velocity $v^{\textrm{out}}$ to be a priori given and to be of the form
\begin{equation*}
	\eqnlabel{Parameter-Compatibility}\tag{CP}
	v^{\textrm{out}} = V \nu, \qquad \mbox{ where $V=V(t,x)$ satisfies } \alpha V + \frac{1}{\textrm{Re}} > 0.
\end{equation*}
Let us note that in the main results to follow, we actually assume $V$ to be constant, since the considered prototype
model problems result by a localization process.

Finally, in order to provide a full model for weakly singular domains like the tube in Figure~1,
boundary conditions have to be prescribed for the other parts of the boundary as well.
For an inlet like $\Gamma^{\textrm{in}}$ it is reasonable to assume an inflow condition
\begin{equation*}
	\eqnlabel{Inflow-BC}\tag*{$(\textrm{\upshape IF})^{u^{\textrm{in}}}$}
	u = u^{\textrm{in}} \quad \mbox{on} \ J \times \Gamma^{\textrm{in}}
\end{equation*}
with a prescribed velocity profile $u^{\textrm{in}}$.
On a lateral wall like $\Gamma^{\textrm{wall}}$ a Navier type condition
\begin{equation*}
	\eqnlabel{Wall-BC}\tag*{$(\textrm{\upshape W})_{\sigma, \textrm{Re}}$}
	\begin{array}{rclll}
		\sigma P_\Gamma u + \frac{2}{\textrm{Re}} P_\Gamma D \nu & = & 0 & \quad \mbox{on} & J \times \Gamma^{\textrm{wall}}, \\[0.5em]
		                                             u \cdot \nu & = & 0 & \quad \mbox{on} & J \times \Gamma^{\textrm{wall}}
	\end{array}
\end{equation*}
with some friction/slip-length $\sigma \geq 0$ is suitable to describe the frictional flow along a wall.

\section{Main Results}
The remaining part of the paper is devoted to the analysis of the Stokes equations $(\textrm{S})^{f, g, u_0}_{\textrm{\upshape Re}}$
subject to a dynamic outflow boundary condition $(\textrm{BDO})^{v^{\textrm{out}}, h}_{\alpha, \textrm{\upshape Re}}$
with $B \in \{\,T,\,N,\,F\,\}$ in several prototype situations.
Our approach is based on $L_p$-maximal regularity for suitable linearizations of the models.
A generic approach to analyze the Stokes and Navier-Stokes equations subject to a large class of different boundary conditions in this setting
has been developed in \cite{Bothe-Koehne-Pruess:Energy-Preserving-Boundary-Conditions, Koehne:Incompressible-Newtonian-Flows}.
In these sources the focus is set on so-called {\itshape energy preserving boundary conditions} which are of local and non-dynamic nature.
However, this generic approach together with generic results on parabolic problems subject to dynamic boundary conditions
as developed in \cite{Denk-Pruess-Zacher:Dynamic-Boundary-Conditions} may be adapted to the Stokes equations
subject to dynamic outflow boundary conditions $(\textrm{BDO})^{v^{\textrm{out}}, h}_{\alpha, \textrm{\upshape Re}}$ with $B \in \{\,T,\,N,\,F\,\}$.

\subsection{Prototype Models}
Here we focus on two prototype models:
Again, we set $J := (0,\,a)$ with $a > 0$.
We first study the fully inhomogeneous Stokes equations
\begin{equation*}
	\eqnlabel{Interior-Linear}\tag*{$(\textrm{\upshape S})^{f, g, u_0}_{\textrm{\upshape Re}}$}
	\begin{array}{rclll}
		\partial_t u - \frac{1}{\textrm{Re}} \Delta u + \nabla p & = & f   & \quad \mbox{in} & J \times \Omega, \\[0.5em]
		                                           \mbox{div}\,u & = & g   & \quad \mbox{in} & J \times \Omega, \\[0.5em]
		                                                    u(0) & = & u_0 & \quad \mbox{in} & \Omega
	\end{array}
\end{equation*}
in a halfspace $\Omega = \bR^n_+ := \{\,(x,\,y) \in \bR^n\,:\,x \in \bR^{n - 1},\ y > 0\,\}$,
subject to a fully inhomogeneous linear dynamic outflow boundary condition on $\Gamma = \partial \Omega$,
i.\,e.\ we either consider the tangentially dynamic outflow boundary condition
\begin{equation*}
	\eqnlabel{Tangential-NRBC-Linear}\tag*{$(\textrm{TDO})^{v^{\textrm{out}}, h}_{\alpha, \textrm{Re}}$}
	\begin{array}{rclll}
		\alpha P_\Gamma (\partial_t u + (v^{\textrm{out}} \cdot \nabla) u) + \frac{2}{\textrm{Re}} P_\Gamma D \nu & = & P_\Gamma h  & \quad \mbox{on} & J \times \Gamma, \\[0.5em]
		                                                                                              u \cdot \nu & = & h \cdot \nu & \quad \mbox{on} & J \times \Gamma,
	\end{array}
\end{equation*}
or the normally dynamic outflow boundary condition
\begin{equation*}
	\eqnlabel{Normal-NRBC-Linear}\tag*{$(\textrm{NDO})^{v^{\textrm{out}}, h}_{\alpha, \textrm{\upshape Re}}$}
	\begin{array}{rclll}
		                                                                                                     P_\Gamma u & = & P_\Gamma h  & \quad \mbox{on} & J \times \Gamma, \\[0.5em]
		\alpha (\partial_t u + (v^{\textrm{out}} \cdot \nabla) u) \cdot \nu + \frac{2}{\textrm{Re}} D \nu \cdot \nu - p & = & h \cdot \nu & \quad \mbox{on} & J \times \Gamma,
	\end{array}
\end{equation*}
or the fully dynamic outflow boundary condition
\begin{equation*}
	\eqnlabel{Fully-NRBC-Linear}\tag*{$(\textrm{FDO})^{v^{\textrm{out}}, h}_{\alpha, \textrm{\upshape Re}}$}
	\begin{array}{rclll}
		      \alpha P_\Gamma (\partial_t u + (v^{\textrm{out}} \cdot \nabla) u) + \frac{2}{\textrm{Re}} P_\Gamma D \nu & = & P_\Gamma h  & \quad \mbox{on} & J \times \Gamma, \\[0.5em]
		\alpha (\partial_t u + (v^{\textrm{out}} \cdot \nabla) u) \cdot \nu + \frac{2}{\textrm{Re}} D \nu \cdot \nu - p & = & h \cdot \nu & \quad \mbox{on} & J \times \Gamma.
	\end{array}
\end{equation*}
Here, $\nu: \Gamma \longrightarrow \bR^n$ again denotes the outer unit normal at the boundary
while we denote by $P_\Gamma := 1 - \nu \otimes \nu$ the projection onto the tangent bundle at the boundary.
Based on our $L_p$-maximal regularity result \Thmref{Linear-Halfspace} the localization procedure presented in
\cite{Bothe-Koehne-Pruess:Energy-Preserving-Boundary-Conditions, Koehne:Incompressible-Newtonian-Flows} leads to corresponding results
for the fully inhomogeneous linear problem in bounded, smooth domains.
However, the details of this localization procedure shall not be presented here.

As a second prototype problem we study the fully inhomogeneous Stokes equations $(\textrm{S})^{f, g, u_0}_{\textrm{\upshape Re}}$
in a wedge \mbox{$\Omega = \bR^n_{+\!\!\!+} := \{\,(x,\,y,\,z) \in \bR^n\,:\,x \in \bR^{n - 2},\ y > 0,\ z > 0\,\}$}.
This prototype domain has two smooth boundary parts which we denote by
\begin{equation*}
	\partial_y \bR^n_{+\!\!\!+} := \Big\{\,(x,\,y,\,z) \in \bR^n\,:\,x \in \bR^{n - 2},\ y = 0,\ z > 0\,\Big\},
\end{equation*}
and $\partial_z \bR^n_{+\!\!\!+}$, respectively.
In order to be able to study domains like the tube in Figure~1, we consider the situation
$\Gamma^{\textrm{wall}} := \partial_y \bR^n_{+\!\!\!+}$ with a fully inhomogeneous Navier condition
\begin{equation*}
	\eqnlabel{Wall-BC-Linear}\tag*{$(\textrm{\upshape W})^{h^{\textrm{wall}}}_{\sigma, \textrm{\upshape Re}}$}
	\begin{array}{rclll}
		\sigma P_\Gamma u + \frac{2}{\textrm{Re}} P_\Gamma D \nu & = & P_\Gamma h^{\textrm{wall}}  & \quad \mbox{on} & (0,\,a) \times \Gamma^{\textrm{wall}}, \\[0.5em]
		                                             u \cdot \nu & = & h^{\textrm{wall}} \cdot \nu & \quad \mbox{on} & (0,\,a) \times \Gamma^{\textrm{wall}}
	\end{array}
\end{equation*}
in combination with $\Gamma^{\textrm{in}} := \partial_z \bR^n_{+\!\!\!+}$ with the inflow condition $(\textrm{IF})^{u^{\textrm{in}}}$.
Moreover, we consider the situation $\Gamma^{\textrm{wall}} := \partial_y \bR^n_{+\!\!\!+}$ with a fully inhomogeneous
Navier condition $(\textrm{W})^{h^{\textrm{wall}}}_{\sigma, \textrm{\upshape Re}}$ in combination with $\Gamma := \partial_z \bR^n_{+\!\!\!+}$
with one of the fully inhomogeneous dynamic outflow boundary conditions $(\textrm{BDO})^{v^{\textrm{out}}, h}_{\alpha, \textrm{\upshape Re}}$ with $B \in \{\,T,\,N,\,F\,\}$.
Based on our $L_p$-maximal regularity results Theorems~\ref{thm:Linear-Wedge-Inflow} and \ref{thm:Linear-Wedge-Non-Reflecting}, the localization procedure presented in
\cite[Chapter~8]{Koehne:Incompressible-Newtonian-Flows} leads to corresponding results for the fully inhomogeneous linear problem in weakly singular domains like the one shown in Figure~1.
However, the fully general notion of weakly singular domains is not needed in the present work. Moreover, due to
space limitations, the details of the localization procedure are also not given here.

\subsection{Necessary Regularity/Compatibility Conditions}
Our approach leads to $L_p$-maximal regular solutions to $(\textrm{S})^{f, g, u_0}_{\textrm{\upshape Re}}$,
i.\,e.\ we assume $f \in L_p(J \times \Omega)^n$ and obtain
\begin{equation*}
	u \in H^1_p(J,\,L_p(\Omega)^n) \cap L_p(J,\,H^2_p(\Omega)^n), \qquad p \in L_p(J,\,\dot{H}^1_p(\Omega)),
\end{equation*}
where $J = (0,\,a)$, $\Omega \in \{\,\bR^n_+,\,\bR^n_{+\!\!\!+}\,\}$,
and $[\,H^s_p(J,\,\cdot\,),\ H^s_p(\Omega)\,:\,s \geq 0,\ 1 < p < \infty\,]$ denotes the scale of (vector-valued) Bessel-potential spaces.
Moreover, in order to handle the pressure we denote by $[\,\dot{H}^s_p(\Omega)\,:\,s \geq 0,\ 1 < p < \infty\,]$ the scale of homogeneous Bessel-potential spaces.
However, if the pressure does not appear in the boundary condition, then it is only unique up to an additive constant.
Hence, in some situations we obtain a unique pressure $p \in L_p(J,\,\hat{H}^1_p(\Omega))$
within the quotient space $\hat{H}^1_p(\Omega) := H^1_p(\Omega) / \bR$ for $1 < p < \infty$.
Now, standard trace theory leads to velocity traces at time $t = 0$, and on smooth parts $\Sigma \subseteq \partial \Omega$ of the boundary within
the scale $[\,W^s_p(J,\,\cdot\,),\ W^s_p(\Omega),\ W^s_p(\Sigma)\,:\,s \geq 0,\ 1 < p < \infty\,]$ of (vector-valued) Sobolev-Slobodeckij spaces.
This implies regularity conditions for the initial velocity $u_0$, and the right-hand side $h$ of the boundary condition,
while the mapping properties of the operator $\mbox{div}$ imply a regularity condition for the right-hand side $g$ of the divergence equation.

Besides the obvious compatibility conditions between $g$ and $u_0$ as well as between $h$ and $u_0$, there is a hidden compatibility condition between $g$ and $h$.
To formulate this condition we argue as in \cite[Section~2]{Bothe-Koehne-Pruess:Energy-Preserving-Boundary-Conditions}:
For $\Omega = \bR^n_+$, and $\Gamma = \partial \Omega$ we define a linear functional $F(\psi,\,\eta)$ for $\psi \in L_p(\Omega)$, and $\eta \in L_p(\Gamma)$ as
\shrinkdisplayskips
\begin{equation*}
	\langle \phi,\,F(\psi,\,\eta) \rangle := \int_\Gamma [\phi]_\Gamma\,\eta\,\mbox{d}\sigma - \int_\Omega \phi\,\psi\,\mbox{d}x, \qquad \phi \in H^1_{p^\prime}(\Omega),
\end{equation*}
\unshrinkdisplayskips
where $1 < p^\prime < \infty$ with $\frac{1}{p} + \frac{1}{p^\prime} = 1$, and $[\,\cdot\,]_\Gamma$ denotes the trace of a quantity defined in $\Omega$ on the boundary $\Gamma$.
Then we have
\shrinkdisplayskips
\begin{equation*}
	\langle \phi,\,F(\mbox{div}\,u,\,[u]_\Gamma \cdot \nu) \rangle = \int_\Omega \nabla \phi \cdot u\,\mbox{d}x, \qquad \phi \in H^1_{p^\prime}(\Omega),
\end{equation*}
\unshrinkdisplayskips
which implies
\begin{equation*}
	|\langle \phi,\,\partial^m_t F(\mbox{div}\,u,\,[u]_\Gamma \cdot \nu) \rangle| \leq \|\partial^m_t u\|_{L_p(J \times \Omega)^n}\,\|\nabla \phi\|_{L_{p^\prime}(\Omega)^n}, \qquad \phi \in H^1_{p^\prime}(\Omega),
\end{equation*}
for $m = 0,\,1$.
Since a solution $u$ to $(\textrm{S})^{f, g, u_0}_{\textrm{\upshape Re}}$ satisfies $\mbox{div}\,u = g$,
this leads to a compatibility condition between $g$ and $[u]_\Gamma \cdot \nu$, which may be a prescribed quantity depending on the boundary condition.
To be precise, we have
\begin{equation*}
	F(g,\,[u]_\Gamma \cdot \nu) \in H^1_p(J,\,\hat{H}^{-1}_p(\Omega))
\end{equation*}
with $\hat{H}^{-1}_p(\Omega) := (H^1_{p^\prime}(\Omega),\,|\,\!\cdot\,\!|_{\dot{H}^1_{p^\prime}(\Omega)})^\prime$, and
$|\,\!\cdot\,\!|_{\dot{H}^1_p(\Omega)} = \|\nabla\,\!\cdot\,\!\|_{L_p(\Omega)^n}$.
Analogously, for $\Omega = \bR^n_{+\!\!\!+}$, $\Sigma = \partial_y \bR^n_{+\!\!\!+}$, and $\Gamma = \partial_z \bR^n_{+\!\!\!+}$
we define the linear functional $F(\psi,\,\eta_\Sigma,\,\eta_\Gamma)$ for $\psi \in L_p(\Omega)$, $\eta_\Sigma \in L_p(\Sigma)$, and $\eta_\Gamma \in L_p(\Gamma)$ as
\begin{equation*}
	\langle \phi,\,F(\psi,\,\eta_\Sigma,\,\eta_\Gamma) \rangle := \int_\Sigma [\phi]_\Sigma\,\eta_\Sigma\,\mbox{d}\sigma + \int_\Gamma [\phi]_\Gamma\,\eta_\Gamma\,\mbox{d}\sigma - \int_\Omega \phi\,\psi\,\mbox{d}x, \qquad \phi \in H^1_{p^\prime}(\Omega),
\end{equation*}
and obtain
\begin{equation*}
	|\langle \phi,\,\partial^m_t F(\mbox{div}\,u,\,[u]_\Sigma \cdot \nu,\,[u]_\Gamma \cdot \nu) \rangle| \leq \|\partial^m_t u\|_{L_p(J \times \Omega)^n}\,\|\nabla \phi\|_{L_{p^\prime}(\Omega)^n}, \quad \phi \in H^1_{p^\prime}(\Omega),
\end{equation*}
for $m = 0,\,1$.
As above, this leads to a compatibility condition between $g$, $[u]_\Sigma \cdot \nu$ and $[u]_\Gamma \cdot \nu$,
which may be prescribed quantities depending on the boundary condition.
In this case we have
\begin{equation*}
	F(g,\,[u]_\Sigma \cdot \nu,\,[u]_\Gamma \cdot \nu) \in H^1_p(J,\,\hat{H}^{-1}_p(\Omega))
\end{equation*}
with $\hat{H}^{-1}_p(\Omega) := (H^1_{p^\prime}(\Omega),\,|\,\!\cdot\,\!|_{\dot{H}^1_{p^\prime}(\Omega)})^\prime$ as above.

Finally, for the model problems in $\Omega = \bR^n_{+\!\!\!+}$ there are compatibility conditions
on the edge $\cE = \overline{\partial_y \bR^n_{+\!\!\!+}} \cap \overline{\partial_z \bR^n_{+\!\!\!+}}$ which have to be satisfied
by the right-hand sides of the boundary conditions.
First, if we impose a Navier condition $(\textrm{W})^{h^{\textrm{wall}}}_{\sigma, \textrm{\upshape Re}}$ on $\Gamma^{\textrm{wall}} = \partial_y \bR^n_{+\!\!\!+}$
in combination with an inflow condition $(\textrm{IF})^{u^{\textrm{in}}}$ on $\Gamma^{\textrm{in}} = \partial_z \bR^n_{+\!\!\!+}$,
then we necessarily have
\begin{equation*}
	\eqnlabel{Inflow-Wall-Compatibility}\tag*{$(\textrm{\upshape IF/W})^{u^{\textrm{in}}, h^{\textrm{wall}}}_{\sigma, \textrm{\upshape Re}}$}
	\begin{array}{rclll}
		                                 \sigma P_\cE u^{\textrm{in}} + \frac{1}{\textrm{\upshape Re}} \partial_{\nu_{\Gamma^{\textrm{wall}}}} (P_\cE u^{\textrm{in}}) + \frac{1}{\textrm{\upshape Re}} \nabla_\cE (h^{\textrm{wall}} \cdot \nu_{\Gamma^{\textrm{wall}}}) & = & P_\cE h^{\textrm{wall}}                              & \quad \mbox{on} & J \times \cE, \\[0.5em]
		                                                                                                                                                                                                               u^{\textrm{in}} \cdot \nu_{\Gamma^{\textrm{wall}}} & = & h^{\textrm{wall}} \cdot \nu_{\Gamma^{\textrm{wall}}} & \quad \mbox{on} & J \times \cE, \\[0.5em]
		\sigma u^{\textrm{in}} \cdot \nu_\Gamma + \frac{1}{\textrm{\upshape Re}} \partial_{\nu_{\Gamma^{\textrm{wall}}}} (u^{\textrm{in}} \cdot \nu_\Gamma) + \frac{1}{\textrm{\upshape Re}} \partial_{\nu_\Gamma} (h^{\textrm{wall}} \cdot \nu_{\Gamma^{\textrm{wall}}}) & = & h^{\textrm{wall}} \cdot \nu_\Gamma                   & \quad \mbox{on} & J \times \cE,
	\end{array}
\end{equation*}
where we denote by $P_\cE$ the projection onto the tangent bundle of $\cE$,
and by $\nabla_\cE$ the surface gradient.
Note that this is a simplified form of the necessary compatibility conditions which is valid for the simple geometry of the wedge $\bR^n_{+\!\!\!+}$.
For a generic weakly singular domain additional curvature related terms appear in the first and last lines
which stem from tangential derivatives of the normal fields $\nu_\Sigma$, and $\nu_\Gamma$.

Second, if we impose a Navier condition $(\textrm{W})^{h^{\textrm{wall}}}_{\sigma, \textrm{\upshape Re}}$ on $\Gamma^{\textrm{wall}} = \partial_y \bR^n_{+\!\!\!+}$
in combination with a dynamic outflow boundary condition $(\textrm{BDO})^{v^{\textrm{out}}, h}_{\alpha, \textrm{\upshape Re}}$
with $B \in \{\,T,\,N,\,F\,\}$ on $\Gamma = \partial_z \bR^n_{+\!\!\!+}$, then we necessarily have an analogous compatibility condition,
where, however, the velocity profile on $\Gamma$ is (in part) not prescribed.
For $B = T$ this leads to
\begin{equation*}
	\eqnlabel{Non-Reflecting-Wall-Compatibility-Tangential}\tag*{$(\textrm{\upshape TDO/W})^{h, h^{\textrm{wall}}, \xi}$}
	\begin{array}{rclll}
		                             \sigma P_\cE \xi + \frac{1}{\textrm{\upshape Re}} \partial_{\nu_{\Gamma^{\textrm{wall}}}} (P_\cE \xi) + \frac{1}{\textrm{\upshape Re}} \nabla_\cE (h^{\textrm{wall}} \cdot \nu_{\Gamma^{\textrm{wall}}}) & = & P_\cE h^{\textrm{wall}}                              & \quad \mbox{on} & J \times \cE, \\[0.5em]
		                                                                                                                                                                                               \xi \cdot \nu_{\Gamma^{\textrm{wall}}} & = & h^{\textrm{wall}} \cdot \nu_{\Gamma^{\textrm{wall}}} & \quad \mbox{on} & J \times \cE, \\[0.5em]
		\sigma h \cdot \nu_\Gamma + \frac{1}{\textrm{\upshape Re}} \partial_{\nu_{\Gamma^{\textrm{wall}}}} (h \cdot \nu_\Gamma) + \frac{1}{\textrm{\upshape Re}} \partial_{\nu_\Gamma} (h^{\textrm{wall}} \cdot \nu_{\Gamma^{\textrm{wall}}}) & = & h^{\textrm{wall}} \cdot \nu_\Gamma                   & \quad \mbox{on} & J \times \cE, \\[0.5em]
		                                                                          \alpha (\partial_t (h^{\textrm{wall}} \cdot \nu_{\Gamma^{\textrm{wall}}}) + V \partial_{\nu_\Gamma} (h^{\textrm{wall}} \cdot \nu_{\Gamma^{\textrm{wall}}}))                                                                                              \\[0.5em]
		                                                                                                                                                                 + \ h^{\textrm{wall}} \cdot \nu_\Gamma - \sigma (h \cdot \nu_\Gamma) & = & h \cdot \nu_{\Gamma^{\textrm{wall}}}                 & \quad \mbox{on} & J \times \cE
	\end{array}
\end{equation*}
for some function
\begin{equation*}
	\xi \in W^{3/2 - 1/2p}_p(J,\,L_p(\Gamma,\,T\Gamma)) \cap H^1_p(J,\,W^{1 - 1/p}_p(\Gamma,\,T\Gamma)) \cap L_p(J,\,W^{2 - 1/p}_p(\Gamma,\,T\Gamma))
\end{equation*}
that is compatible with $u_0$.
For $B = N$ we obtain
\begin{equation*}
	\eqnlabel{Non-Reflecting-Wall-Compatibility-Normal}\tag*{$(\textrm{\upshape NDO/W})^{h, h^{\textrm{wall}}, \eta}$}
	\begin{array}{rclll}
		   \sigma P_\cE h + \frac{1}{\textrm{\upshape Re}} \partial_{\nu_{\Gamma^{\textrm{wall}}}} (P_\cE h) + \frac{1}{\textrm{\upshape Re}} \nabla_\cE (h^{\textrm{wall}} \cdot \nu_{\Gamma^{\textrm{wall}}}) & = & P_\cE h^{\textrm{wall}}                              & \quad \mbox{on} & J \times \cE, \\[0.5em]
		                                                                                                                                                                   h \cdot \nu_{\Gamma^{\textrm{wall}}} & = & h^{\textrm{wall}} \cdot \nu_{\Gamma^{\textrm{wall}}} & \quad \mbox{on} & J \times \cE, \\[0.5em]
		\sigma \eta + \frac{1}{\textrm{\upshape Re}} \partial_{\nu_{\Gamma^{\textrm{wall}}}} \eta + \frac{1}{\textrm{\upshape Re}} \partial_{\nu_\Gamma} (h^{\textrm{wall}} \cdot \nu_{\Gamma^{\textrm{wall}}}) & = & h^{\textrm{wall}} \cdot \nu_\Gamma                   & \quad \mbox{on} & J \times \cE
	\end{array}
\end{equation*}
for some function
\begin{equation*}
	\eta \in H^1_p(J,\,W^{1 - 1/p}_p(\Gamma)) \cap L_p(J,\,W^{2 - 1/p}_p(\Gamma))
\end{equation*}
that is compatible with $g$, and $u_0$.
For $B = F$ we have
\begin{equation*}
	\eqnlabel{Non-Reflecting-Wall-Compatibility-Full}\tag*{$(\textrm{\upshape FDO/W})^{h, h^{\textrm{wall}}, \xi, \eta}$}
	\begin{array}{rclll}
		\sigma P_\cE \xi + \frac{1}{\textrm{\upshape Re}} \partial_{\nu_{\Gamma^{\textrm{wall}}}} (P_\cE \xi) + \frac{1}{\textrm{\upshape Re}} \nabla_\cE (h^{\textrm{wall}} \cdot \nu_{\Gamma^{\textrm{wall}}}) & = & P_\cE h^{\textrm{wall}}                              & \quad \mbox{on} & J \times \cE, \\[0.5em]
		                                                                                                                                                                  \xi \cdot \nu_{\Gamma^{\textrm{wall}}} & = & h^{\textrm{wall}} \cdot \nu_{\Gamma^{\textrm{wall}}} & \quad \mbox{on} & J \times \cE, \\[0.5em]
		 \sigma \eta + \frac{1}{\textrm{\upshape Re}} \partial_{\nu_{\Gamma^{\textrm{wall}}}} \eta + \frac{1}{\textrm{\upshape Re}} \partial_{\nu_\Gamma} (h^{\textrm{wall}} \cdot \nu_{\Gamma^{\textrm{wall}}}) & = & h^{\textrm{wall}} \cdot \nu_\Gamma                   & \quad \mbox{on} & J \times \cE, \\[0.5em]
		                                             \alpha (\partial_t (h^{\textrm{wall}} \cdot \nu_{\Gamma^{\textrm{wall}}}) + V \partial_{\nu_\Gamma} (h^{\textrm{wall}} \cdot \nu_{\Gamma^{\textrm{wall}}}))                                                                                              \\[0.5em]
		                                                                                                                                                    + \ h^{\textrm{wall}} \cdot \nu_\Gamma - \sigma \eta & = & h \cdot \nu_{\Gamma^{\textrm{wall}}}                 & \quad \mbox{on} & J \times \cE
	\end{array}
\end{equation*}
for some functions $\xi$, and $\eta$ as above.
Again these are simplified forms of the necessary compatibility conditions which are valid for the simple geometry of the wedge $\bR^n_{+\!\!\!+}$
and have to be modified for a generic weakly singular domain by additional curvature related terms.

\subsection{Main Results}
With the above preparations, we now formulate our main results,
the proofs of which are carried out in Sections~\ref{sec:Halfspace} and \ref{sec:Wedge}.
\begin{theorem}
	\thmlabel{Linear-Halfspace}
	Let $a > 0$, let $J := (0,\,a)$ and let $\Omega = \bR^n_+$ with $\Gamma := \partial \Omega$.
	Let $1 < p < \infty$ with $p \neq \frac{3}{2},\,3$.
	Moreover, let $B \in \{\,T,\,N,\,F\,\}$, and let $\alpha,\,\textrm{\upshape Re} > 0$.
	Furthermore, let $v^{\textrm{out}} = V \nu$ with $V > - \frac{1}{\alpha \textrm{\upshape Re}}$, and let
	\begin{itemize}
		\item $f \in L_p(J \times \Omega)^n$,
		\item $g \in H^{1/2}_p(J,\,L_p(\Omega)) \cap L_p(J,\,H^1_p(\Omega))$,
		\item $h \in L_p(J,\,W^{1 - 1/p}_p(\Gamma))^n$,
		\item $u_0 \in W^{2 - 2/p}_p(\Omega)^n$ with $\textrm{\upshape div}\,u_0 = g(0)$ in $\Omega$ for $p \geq 2$.
	\end{itemize}
	If $B = T$, let
	\begin{itemize}
		\item $P_\Gamma h \in W^{1/2 - 1/2p}_p(J,\,L_p(\Gamma))^n$, $P_\Gamma [u_0]_\Gamma \in W^{2 - 2/p}_p(\Gamma)^n$,
		\item $h \cdot \nu \in W^{1 - 1/2p}_p(J,\,L_p(\Gamma)) \cap L_p(J,\,W^{2 - 1/p}_p(\Gamma))$,
		\item $F(g,\,h \cdot \nu) \in H^1_p(J,\,\hat{H}^{-1}_p(\Omega))$,
		\item $[u_0]_\Gamma \cdot \nu = h(0) \cdot \nu$ for $p > \frac{3}{2}$;
	\end{itemize}
	if $B = N$, let
	\begin{itemize}
		\item $P_\Gamma h \in W^{1 - 1/2p}_p(J,\,L_p(\Gamma))^n \cap L_p(J,\,W^{2 - 1/p}_p(\Gamma))^n$,
		\item $F(g,\,\eta) \in H^1_p(J,\,\hat{H}^{-1}_p(\Omega))$ for some \\
			$\eta \in H^1_p(J,\,W^{1 - 1/p}_p(\Gamma)) \cap L_p(J,\,W^{2 - 1/p}_p(\Gamma))$
			with $[u_0]_\Gamma \cdot \nu = \eta(0)$ for $p > \frac{3}{2}$,
		\item $P_\Gamma [u_0]_\Gamma = P_\Gamma h(0) \cdot \nu$ for $p > \frac{3}{2}$;
	\end{itemize}
	if $B = F$, let
	\begin{itemize}
		\item $P_\Gamma h \in W^{1/2 - 1/2p}_p(J,\,L_p(\Gamma))^n$, $P_\Gamma [u_0]_\Gamma \in W^{2 - 2/p}_p(\Gamma)^n$,
		\item $F(g,\,\eta) \in H^1_p(J,\,\hat{H}^{-1}_p(\Omega))$ for some \\
			$\eta \in H^1_p(J,\,W^{1 - 1/p}_p(\Gamma)) \cap L_p(J,\,W^{2 - 1/p}_p(\Gamma))$
			with $[u_0]_\Gamma \cdot \nu = \eta(0)$ for $p > \frac{3}{2}$;
	\end{itemize}
	Then the system \eqnref*{Interior-Linear}, $(\textrm{\upshape BDO})^{v^{\textrm{out}}, h}_{\alpha, \textrm{\upshape Re}}$ admits a unique maximal regular solution
	\begin{itemize}
		\item $u \in H^1_p(J,\,L_p(\Omega))^n \cap L_p(J,\,H^2_p(\Omega))^n$,
		\item $p \in L_p(J,\,\hat{H}^1_p(\Omega))$ for $B = T$, or \\
			$p \in L_p(J,\,\dot{H}^1_p(\Omega))$ with $[p]_\Gamma \in L_p(J,\,W^{1 - 1/p}_p(\Gamma))$ for $B \in \{\,N,\,F\,\}$.
	\end{itemize}
	If $B \in \{\,T,\,F\,\}$, then we additionally have
	\begin{itemize}
		\item $P_\Gamma [u]_\Gamma \in W^{3/2 - 1/2p}_p(J,\,L_p(\Gamma))^n \cap H^1_p(J,\,W^{1 - 1/p}_p(\Gamma))^n \cap L_p(J,\,W^{2 - 1/p}_p(\Gamma))^n$;
	\end{itemize}
	if $B \in \{\,N,\,F\,\}$, then we additionally have
	\begin{itemize}
		\item $[u]_\Gamma \cdot \nu \in H^1_p(J,\,W^{1 - 1/p}_p(\Gamma)) \cap L_p(J,\,W^{2 - 1/p}_p(\Gamma))$.
	\end{itemize}
	The solutions depend continuously on the data in the corresponding spaces.
\end{theorem}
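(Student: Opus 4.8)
The plan is to follow the transform-based $L_p$-maximal regularity strategy of \cite{Bothe-Koehne-Pruess:Energy-Preserving-Boundary-Conditions, Koehne:Incompressible-Newtonian-Flows}, augmented by the theory for dynamic boundary conditions of \cite{Denk-Pruess-Zacher:Dynamic-Boundary-Conditions}. First I would reduce to homogeneous interior data. Solving the full-space momentum equation with right-hand side $f$ (suitably extended) by the classical maximal regularity of the Laplacian, and correcting the divergence by solving $\mathrm{div}\,w = g$ with $w$ in the class $H^1_p(J, L_p) \cap L_p(J, H^2_p)$ admitted by $g \in H^{1/2}_p(J, L_p) \cap L_p(J, H^1_p)$, one subtracts these contributions. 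The compatibility requirement $F(g, h \cdot \nu) \in H^1_p(J, \hat{H}^{-1}_p(\Omega))$ is precisely what permits this reduction while retaining the prescribed normal flux consistently, and the trace identities at $t = 0$ absorb $u_0$. It then suffices to solve $\partial_t u - \frac{1}{\mathrm{Re}} \Delta u + \nabla p = 0$, $\mathrm{div}\,u = 0$, $u(0) = 0$ subject to modified but structurally unchanged boundary data.

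For the reduced problem I would apply the Laplace transform in time, with covariable $\lambda$ in a sector $\Sigma_\theta$, and the Fourier transform in the tangential variable $x \in \bR^{n-1}$, with covariable $\xi$. The equations become a linear system of ordinary differential equations in the normal variable $y > 0$, whose bounded solutions are spanned by $e^{-\omega y}$ with $\omega := \sqrt{\mathrm{Re}\,\lambda + |\xi|^2}$ for the velocity and by $e^{-|\xi| y}$ for the harmonic part of the pressure. Inserting this finite-parameter family into the boundary conditions yields a boundary symbol, i.e.\ a linear map from the unknown coefficients to the transformed data $\hat{h}$, whose invertibility must be established. The dynamic terms contribute, after transformation, the boundary operator $\alpha(\lambda + V \partial_\nu)$, which on the decaying velocity mode $e^{-\omega y}$ reduces to multiplication by $\alpha(\lambda + V\omega)$, coupled to the viscous stress $\frac{2}{\mathrm{Re}} D\nu$ and, for $B \in \{N, F\}$, to the pressure trace.

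The main obstacle is the verification of the Lopatinskii--Shapiro condition, i.e.\ the uniform invertibility of this boundary symbol for all $\lambda \in \Sigma_\theta$ and $\xi \in \bR^{n-1}$ with $(\lambda, \xi) \neq 0$, together with the $\mathcal{R}$-boundedness of the associated solution operators. This is where the hypothesis $\alpha V + \frac{1}{\mathrm{Re}} > 0$, equivalently $V > -\frac{1}{\alpha \mathrm{Re}}$, enters decisively: a direct computation of the symbol determinant shows its modulus is bounded below by a positive multiple of $(\alpha V + \frac{1}{\mathrm{Re}})$ times a homogeneous combination of $|\lambda|^{1/2}$, $\omega$ and $|\xi|$, so that the determinant cannot vanish on the relevant parameter range and the symbol stays boundedly invertible. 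Estimating the entries of the inverse symbol and their $(\lambda, \xi)$-derivatives then furnishes the $\mathcal{R}$-bounds required by the operator-valued Mikhlin multiplier theorem of Weis.

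With the symbol bounds in hand, the multiplier theorem transfers them into maximal regularity for the transformed solution; passing back to the time domain gives the estimate on the half-line, and restriction to the finite interval $J$ yields the asserted regularity $u \in H^1_p(J, L_p) \cap L_p(J, H^2_p)$ together with the stated pressure regularity, including the trace $[p]_\Gamma \in L_p(J, W^{1-1/p}_p(\Gamma))$ for $B \in \{N, F\}$. The additional boundary regularity of $P_\Gamma [u]_\Gamma$ for $B \in \{T, F\}$ and of $[u]_\Gamma \cdot \nu$ for $B \in \{N, F\}$ follows by reading off the trace of the solution formula in the dynamic directions, where the extra time derivative in the boundary condition upgrades the temporal trace regularity by the expected half-order, exactly as in \cite{Denk-Pruess-Zacher:Dynamic-Boundary-Conditions}. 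Uniqueness and continuous dependence are immediate consequences of the a priori estimate, and the exclusion $p \neq \frac{3}{2}, 3$ is needed only to render the trace and compatibility conditions at $t = 0$ well-defined within the Sobolev--Slobodeckij scale.
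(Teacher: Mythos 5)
Your overall strategy (reduction to homogeneous interior data, Laplace--Fourier transform, boundary symbol analysis) points in the right general direction, but it diverges from the paper's proof precisely where the real difficulties lie, and the steps you substitute are not carried out. The paper does \emph{not} verify a Lopatinskii--Shapiro condition for the full coupled Stokes boundary symbol and then invoke an operator-valued Mikhlin theorem. Instead, after a case-dependent reduction (for $B=T$ via a parabolic system with dynamic boundary conditions and a separately constructed pressure gradient; for $B=N$ via the Dirichlet--Stokes problem with the auxiliary function $\eta$; for $B=F$ via the already-settled case $B=T$), it first determines the pressure boundary datum as the image of $h\cdot\nu$ under a scalar operator $\Pi$ (a Neumann datum $-[\partial_y p]_y=\Pi h_w$ for $B=T$, a Dirichlet datum $[p]_\Gamma=\Pi h_w$ together with $\Sigma h_w=h_w-[p]_\Gamma$ for $B\in\{N,F\}$), proves boundedness of $\Pi$ and $\Sigma$ by showing that explicit scalar symbols are bounded and holomorphic on $\Sigma_{\pi-\theta}\times\Sigma_{\theta/2}$ via the joint $\cH^\infty$-calculus of $(\partial_t,\sqrt{-\Delta_{\bR^{n-1}}/\textrm{Re}})$ (Kalton--Weis), then solves a weak elliptic problem for the pressure and finally a parabolic problem with dynamic boundary condition for the velocity (Propositions~A.1 and A.2).

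Two concrete gaps remain in your version. First, your central claim --- that the boundary symbol determinant is bounded below by a positive multiple of $\alpha V+\frac{1}{\textrm{Re}}$ times a homogeneous expression --- is exactly the crux of the whole proof and is asserted rather than computed; in the paper this is replaced by explicit argument estimates showing that the reciprocals $m_j^{-1}$ of the factored symbols stay uniformly away from the origin on the product sector, and this is the only place where $\kappa=\alpha V+\frac{1}{\textrm{Re}}>0$ actually enters. Second, for $B\in\{N,F\}$ the normal datum satisfies only $h\cdot\nu\in L_p(J,\,W^{1-1/p}_p(\Gamma))$ with \emph{no} temporal regularity, so it does not lie in the anisotropic trace class $W^{\kappa/2-1/2p}_p(J,\,L_p)\cap L_p(J,\,W^{\kappa-1/p}_p)$ that the standard multiplier machinery for parabolic boundary value problems requires; your plan offers no mechanism for absorbing such data. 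This is precisely why the paper splits $h_w$ into $\Pi h_w$ (fed to the pressure, which needs only $L_p$ in time) and $\Sigma h_w$ (which gains spatial regularity through the smoothing factor $|\zeta|/(\omega+|\zeta|)$ in its symbol and can therefore enter the low-regularity parabolic result of Proposition~A.2). Without these two ingredients the argument does not close.
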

The proof of \Thmref{Linear-Halfspace}, which is based on a precise analysis of the corresponding boundary symbols, is carried out in \Secref{Halfspace}.
Here, however, some remarks seem to be in order.
\begin{remark}
	\remlabel{Linear-Halfspace}
	There are some immediate corollaries of \Thmref{Linear-Halfspace},
	which we want to mention without elaborate proofs.
	\begin{enumerate}[(a)]
		\item If $B \in \{\,N,\,F\,\}$, then the assumptions on the right-hand side of the boundary condition may be relaxed to
			$h \cdot \nu \in L_p(J,\,\dot{W}^{1 - 1/p}_p(\Gamma))$ to obtain a maximal regular solution
			as in \Thmref{Linear-Halfspace} with $[p]_\Gamma \in L_p(J,\,\dot{W}^{1 - 1/p}_p(\Gamma))$.
			Indeed, one first constructs an auxiliary pressure $q \in L_p(J,\,\dot{H}^1_p(\Omega))$ as a weak solution to
			\begin{equation*}
				\begin{array}{rclll}
					- \Delta q & = & 0             & \qquad \mbox{in} & J \times \Omega, \\[0.5em]
					         q & = & - h \cdot \nu & \qquad \mbox{on} & J \times \Gamma,
				\end{array}
			\end{equation*}
			and then solves $(\textrm{S})^{f^\prime, g, u_0}_{\textrm{\upshape Re}}$, $(\textrm{BDO})^{v^{\textrm{out}}, h^\prime}_{\alpha, \textrm{\upshape Re}}$ via \Thmref{Linear-Halfspace}
			with the adjusted data $f^\prime = f - \nabla q$, $P_\Gamma h^\prime = P_\Gamma h$, and $h^\prime \cdot \nu = 0$ to obtain a solution $(u^\prime,\,p^\prime)$ in the maximal regularity class.
			Then $u = u^\prime$, $p = p^\prime + q$ constitutes the unique maximal regular solution to the model problem with relaxed regularity assumptions.
			Conversely the relaxed version of \Thmref{Linear-Halfspace} obviously implies \Thmref{Linear-Halfspace}, i.\,e.\ both formulations of the theorem are equivalent.
		\item One may assume $v^{\textrm{out}}$ to be given based on
			\begin{equation*}
				V \in W^{1 - 1/2p}_p(J,\,L_p(\Gamma)) \cap L_p(J,\,W^{2 - 1/p}_p(\Gamma))
			\end{equation*}
			such that \eqnref{Parameter-Compatibility} is satisfied.
			Indeed, this problem may be reduced to \Thmref{Linear-Halfspace} via a localization procedure.
	\end{enumerate}
	Of course, Corollaries (a) and (b) are independent of each other and may be applied simultaneously.
\end{remark}
\begin{remark}
	\remlabel{Non-Linear-Halfspace}
	\Thmref{Linear-Halfspace} and its variants in \Remref{Linear-Halfspace} are the cornerstones to obtain corresponding results for bounded, smooth domains $\Omega \subseteq \bR^n$
	via well-known localization procedures as presented e.\,g.\ in \cite{Bothe-Koehne-Pruess:Energy-Preserving-Boundary-Conditions}.
	Based on well-known perturbation arguments, it is then also possible to obtain (local-in-time) strong solutions
	to the corresponding non-linear equations \eqnref*{Interior} with non-linear variants of the dynamic outflow boundary conditions.
\end{remark}
\begin{theorem}
	\thmlabel{Linear-Wedge-Inflow}
	Let $a > 0$, let $J := (0,\,a)$ and let $\Omega = \bR^n_{+\!\!\!+}$
	with $\Gamma^{\textrm{\upshape wall}} := \partial_y \bR^n_{+\!\!\!+}$, and $\Gamma^{\textrm{\upshape in}} := \partial_z \bR^n_{+\!\!\!+}$.
	Let $1 < p < \infty$ with $p \neq \frac{3}{2},\,3$.
	Moreover, let $\sigma \geq 0$, and let
	\begin{itemize}
		\item $f \in L_p(J \times \Omega)^n$,
		\item $g \in H^{1/2}_p(J,\,L_p(\Omega)) \cap L_p(J,\,H^1_p(\Omega))$,
		\item $u^{\textrm{\upshape in}} \in W^{1 - 1/2p}_p(J,\,L_p(\Gamma^{\textrm{\upshape in}}))^n \cap L_p(J,\,W^{2 - 1/p}_p(\Gamma^{\textrm{\upshape in}}))^n$,
		\item $h^{\textrm{\upshape wall}} \in W^{1/2 - 1/2p}_p(J,\,L_p(\Gamma^{\textrm{\upshape wall}}))^n \cap L_p(J,\,W^{1 - 1/p}_p(\Gamma^{\textrm{\upshape wall}}))^n$,
		\item $h^{\textrm{\upshape wall}} \cdot \nu \in W^{1 - 1/2p}_p(J,\,L_p(\Gamma^{\textrm{\upshape wall}})) \cap L_p(J,\,W^{2 - 1/p}_p(\Gamma^{\textrm{\upshape wall}}))$,
		\item $u_0 \in W^{2 - 2/p}_p(\Omega)^n$ with $\textrm{\upshape div}\,u_0 = g(0)$ in $\Omega$ for $p \geq 2$, and \\
			$[u_0]_{\Gamma^{\textrm{\upshape in}}} = u^{\textrm{\upshape in}}(0)$ as well as
			$[u_0]_{\Gamma^{\textrm{\upshape wall}}} \cdot \nu = h^{\textrm{\upshape wall}}(0) \cdot \nu$ for $p > \frac{3}{2}$, and \\
			$\sigma P_\Gamma [u_0]_{\Gamma^{\textrm{\upshape wall}}} + \frac{2}{\textrm{\upshape Re}} P_\Gamma [D_0]_{\Gamma^{\textrm{\upshape wall}}}\,\nu = P_\Gamma h^{\textrm{wall}}(0)$ for $p > 3$.
	\end{itemize}
	Furthermore, let the compatibility condition \eqnref*{Inflow-Wall-Compatibility} be satisfied for $p \geq 2$.
	Then the system \eqnref*{Interior-Linear}, \eqnref*{Inflow-BC}, \eqnref*{Wall-BC-Linear} admits a unique maximal regular solution
	\begin{itemize}
		\item $u \in H^1_p(J,\,L_p(\Omega))^n \cap L_p(J,\,H^2_p(\Omega))^n$,
		\item $p \in L_p(J,\,\hat{H}^1_p(\Omega))$.
	\end{itemize}
	The solutions depend continuously on the data in the corresponding spaces.
\end{theorem}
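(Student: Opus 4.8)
The plan is to reduce the wedge problem to half-space problems whose $L_p$-maximal regularity is already available, exploiting the orthogonality of the two faces. Since both boundary conditions here are classical — a Navier condition \eqnref*{Wall-BC-Linear} on $\Gamma^{\textrm{wall}}=\partial_y\bR^n_{+\!\!\!+}$ and a Dirichlet (inflow) condition \eqnref*{Inflow-BC} on $\Gamma^{\textrm{in}}=\partial_z\bR^n_{+\!\!\!+}$ — I would invoke the known $L_p$-maximal regularity for the Stokes equations with Navier, respectively Dirichlet, data in a half-space from \cite{Bothe-Koehne-Pruess:Energy-Preserving-Boundary-Conditions, Koehne:Incompressible-Newtonian-Flows}, rather than \Thmref{Linear-Halfspace}. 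The decisive device is reflection across the Navier wall $\{y=0\}$: the perfect-slip/impermeability structure of the Navier condition is compatible with the even extension of the tangential velocity components $(u_x,u_z)$ and of the pressure $p$, together with the odd extension of the normal component $u_y$.

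Concretely, I would first remove the interior data and the inhomogeneity of the wall condition. Lifting $f$, $g$ and $u_0$ is done via the solvability of the full-space, resp.\ half-space, Stokes problem, and the inhomogeneous wall datum $h^{\textrm{wall}}$ is removed by subtracting an auxiliary function obtained from the half-space Navier problem on $\{y>0\}$; each reduction modifies $u^{\textrm{in}}$ and the edge data in a controlled way. After these reductions one is left with homogeneous interior and homogeneous (perfect-slip) wall data, and I would reflect the remaining Dirichlet datum $u^{\textrm{in}}$ across $\{y=0\}$ with the parities above, extending $\Gamma^{\textrm{in}}=\{z=0,\ y>0\}$ to the full hyperplane $\{z=0,\ y\in\bR\}$. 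Solving the resulting half-space Stokes--Dirichlet problem on $\{z>0,\ y\in\bR\}$ in the maximal regularity class and invoking uniqueness, the symmetry of the reflected data forces the solution to inherit the same parities; hence $u_y$ is odd and vanishes on $\{y=0\}$, while the even parity of $(u_x,u_z,p)$ yields $\partial_y u_x=\partial_y u_z=0$ and $\partial_x u_y=\partial_z u_y=0$ there, so that $P_\Gamma D\nu=0$ and $u\cdot\nu=0$ on the wall. Thus the restriction to the wedge automatically satisfies the homogeneous perfect-slip condition, and it solves \eqnref*{Interior-Linear}, \eqnref*{Inflow-BC}, \eqnref*{Wall-BC-Linear} with $\sigma=0$.

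The remaining friction term $\sigma P_\Gamma u$ is of zeroth order on the boundary and, for each fixed $\sigma\ge0$, I would reinstate it by a standard perturbation/Neumann-series argument over the finite interval $J=(0,a)$: moving $-\sigma P_\Gamma u|_{\textrm{wall}}$ to the right-hand side places it in the same trace class as $P_\Gamma h^{\textrm{wall}}$, and the corresponding solution operator has norm that can be made small on short subintervals, after which one patches finitely many subintervals to cover $J$. Uniqueness and the continuous dependence on the data then follow from the corresponding properties of the half-space problems together with the symmetry of the reflection, noting that any wedge solution satisfies perfect slip and hence reflects to a half-space solution to which half-space uniqueness applies.

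The main obstacle is the consistency of the reflected and lifted data along the edge $\cE=\overline{\partial_y\bR^n_{+\!\!\!+}}\cap\overline{\partial_z\bR^n_{+\!\!\!+}}$. After reflecting $u^{\textrm{in}}$ across $\{y=0\}$, the extended Dirichlet datum lies in the required trace class $W^{1-1/2p}_p(J,L_p)\cap L_p(J,W^{2-1/p}_p)$ on the whole hyperplane $\{z=0\}$ only if $u^{\textrm{in}}$ and the wall data match to sufficiently high order across $\cE$; this matching is precisely encoded by the compatibility condition \eqnref*{Inflow-Wall-Compatibility} (for $p\ge2$), which also guarantees that the lift of $h^{\textrm{wall}}$ does not destroy the symmetry needed for the reflection. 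In addition, the hidden $g$--$u^{\textrm{in}}$ compatibility enforced through the functional $F$ must be propagated through the reductions so that the reflected right-hand side of the divergence equation stays in $H^1_p(J,\hat{H}^{-1}_p(\Omega))$. Verifying that all reflected and lifted data indeed land in the correct Sobolev--Slobodeckij trace spaces — with no spurious singularity produced along $\cE$ — is the technical heart of the argument and is exactly what the compatibility conditions are designed to ensure.
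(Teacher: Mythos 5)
Your proposal is correct and follows essentially the same route as the paper: reduce to homogeneous data on the wall, reflect across the perfect-slip face $\{y=0\}$ with even parity for the tangential velocity components and the pressure and odd parity for the normal component, solve the resulting half-space Stokes--Dirichlet problem, let the symmetry force the homogeneous Navier condition on the wall, and reinstate the friction term $\sigma P_\Gamma u$ by a lower-order perturbation argument, with the edge compatibility conditions guaranteeing that the reflected data stay in the correct trace classes. The only minor deviation is that the paper performs the preliminary lifts of $f$, $u_0$ and the wall data via decoupled heat equations with $p:=0$ and carries $g$ (evenly extended) into the final reflected Stokes problem, rather than removing $f$, $g$, $u_0$ through auxiliary Stokes solves as you suggest, but this does not change the substance of the argument.
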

The proof of \Thmref{Linear-Wedge-Inflow}, which is based on a reflection technique and \Thmref{Linear-Halfspace}, is carried out in \Secref{Wedge}.
Here, however, we have to compare it with known results. \pagebreak
\begin{remark}
	\remlabel{Linear-Wedge-Inflow}
	\Thmref{Linear-Wedge-Inflow} is contained as a special case in \cite[Theorem~8.24]{Koehne:Incompressible-Newtonian-Flows}.
	However, in order to keep this paper self-contained we give a short proof of \Thmref{Linear-Wedge-Inflow} in \Secref{Wedge},
	which is different (shorter and more descriptive) from that presented in \cite{Koehne:Incompressible-Newtonian-Flows},
	since we restrict our considerations to a special combination of boundary conditions.
\end{remark}
\begin{theorem}
	\thmlabel{Linear-Wedge-Non-Reflecting}
	Let $a > 0$, let $J := (0,\,a)$ and let $\Omega = \bR^n_{+\!\!\!+}$
	with $\Gamma^{\textrm{\upshape wall}} := \partial_y \bR^n_{+\!\!\!+}$, and $\Gamma := \partial_z \bR^n_{+\!\!\!+}$.
	Let $1 < p < \infty$ with $p \neq \frac{3}{2},\,3$.
	Moreover, let $\sigma \geq 0$, let $B \in \{\,T,\,N,\,F\,\}$, and let $\alpha,\,\textrm{\upshape Re} > 0$.
	Furthermore, let $v^{\textrm{out}} = V \nu$ with $V > - \frac{1}{\alpha \textrm{\upshape Re}}$, and let
	\begin{itemize}
		\item $f \in L_p(J \times \Omega)^3$,
		\item $g \in H^{1/2}_p(J,\,L_p(\Omega)) \cap L_p(J,\,H^1_p(\Omega))$,
		\item $h \in L_p(J,\,W^{1 - 1/p}_p(\Gamma))^3$,
		\item $h^{\textrm{\upshape wall}} \in W^{1/2 - 1/2p}_p(J,\,L_p(\Gamma^{\textrm{\upshape wall}}))^n \cap L_p(J,\,W^{1 - 1/p}_p(\Gamma^{\textrm{\upshape wall}}))^n$,
		\item $h^{\textrm{\upshape wall}} \cdot \nu \in W^{1 - 1/2p}_p(J,\,L_p(\Gamma^{\textrm{\upshape wall}})) \cap L_p(J,\,W^{2 - 1/p}_p(\Gamma^{\textrm{\upshape wall}}))$,
		\item $u_0 \in W^{2 - 2/p}_p(\Omega)^n$ with $\textrm{\upshape div}\,u_0 = g(0)$ in $\Omega$ for $p \geq 2$, and \\
			$[u_0]_{\Gamma^{\textrm{\upshape wall}}} \cdot \nu = h^{\textrm{\upshape wall}}(0) \cdot \nu$ for $p > \frac{3}{2}$, and \\
			$\sigma P_\Gamma [u_0]_{\Gamma^{\textrm{\upshape wall}}} + \frac{2}{\textrm{\upshape Re}} P_\Gamma [D_0]_{\Gamma^{\textrm{\upshape wall}}}\,\nu = P_\Gamma h^{\textrm{wall}}(0)$ for $p > 3$.
	\end{itemize}
	If $B = T$, let
	\begin{itemize}
		\item $P_\Gamma h \in W^{1/2 - 1/2p}_p(J,\,L_p(\Gamma))^n$, $P_\Gamma [u_0]_\Gamma \in W^{2 - 2/p}_p(\Gamma)^n$,
		\item $h \cdot \nu \in W^{1 - 1/2p}_p(J,\,L_p(\Gamma)) \cap L_p(J,\,W^{2 - 1/p}_p(\Gamma))$,
		\item $F(g,\,h^{\textrm{\upshape wall}} \cdot \nu,\,h \cdot \nu) \in H^1_p(J,\,\hat{H}^{-1}_p(\Omega))$,
		\item $[u_0]_\Gamma \cdot \nu = h(0) \cdot \nu$ for $p > \frac{3}{2}$;
		\item the compatibility condition \eqnref*{Non-Reflecting-Wall-Compatibility-Tangential} be satisfied for $p \geq 2$ for some \\
			$\xi \in W^{3/2 - 1/2p}_p(J,\,L_p(\Gamma,\,T\Gamma)) \cap H^1_p(J,\,W^{1 - 1/p}_p(\Gamma,\,T\Gamma)) \cap L_p(J,\,W^{2 - 1/p}_p(\Gamma,\,T\Gamma))$ \\
			with $P_\Gamma[u_0]_\Gamma = \xi(0)$ for $p > \frac{3}{2}$;
	\end{itemize}
	if $B = N$, let
	\begin{itemize}
		\item $P_\Gamma h \in W^{1 - 1/2p}_p(J,\,L_p(\Gamma))^n \cap L_p(J,\,W^{2 - 1/p}_p(\Gamma))^n$, $P_\Gamma [u_0]_\Gamma = P_\Gamma h$ for $p > \frac{3}{2}$,
		\item $F(g,\,h^{\textrm{\upshape wall}} \cdot \nu,\,\eta) \in H^1_p(J,\,\hat{H}^{-1}_p(\Omega))$ for some \\
			$\eta \in H^1_p(J,\,W^{1 - 1/p}_p(\Gamma)) \cap L_p(J,\,W^{2 - 1/p}_p(\Gamma))$
			with $[u_0]_\Gamma \cdot \nu = \eta(0)$ for $p > \frac{3}{2}$,
		\item the compatibility condition \eqnref*{Non-Reflecting-Wall-Compatibility-Normal} be satisfied for $p \geq 2$;
	\end{itemize}
	if $B = F$, let
	\begin{itemize}
		\item $P_\Gamma h \in W^{1/2 - 1/2p}_p(J,\,L_p(\Gamma))^n$, $P_\Gamma [u_0]_\Gamma \in W^{2 - 2/p}_p(\Gamma)^n$,
		\item $F(g,\,h^{\textrm{\upshape wall}} \cdot \nu,\,\eta) \in H^1_p(J,\,\hat{H}^{-1}_p(\Omega))$ for some \\
			$\eta \in H^1_p(J,\,W^{1 - 1/p}_p(\Gamma)) \cap L_p(J,\,W^{2 - 1/p}_p(\Gamma))$
			with $[u_0]_\Gamma \cdot \nu = \eta(0)$ for $p > \frac{3}{2}$,
		\item the compatibility condition \eqnref*{Non-Reflecting-Wall-Compatibility-Full} be satisfied for $p \geq 2$ for some \\
			$\xi \in W^{3/2 - 1/2p}_p(J,\,L_p(\Gamma,\,T\Gamma)) \cap H^1_p(J,\,W^{1 - 1/p}_p(\Gamma,\,T\Gamma)) \cap L_p(J,\,W^{2 - 1/p}_p(\Gamma,\,T\Gamma))$ \\
			with $P_\Gamma[u_0]_\Gamma = \xi(0)$ for $p > \frac{3}{2}$;
	\end{itemize}
	Then the system \eqnref*{Interior-Linear}, $(\textrm{\upshape BDO})^{v^{\textrm{out}}, h}_{\alpha, \textrm{\upshape Re}}$, \eqnref*{Wall-BC-Linear} admits a unique maximal regular solution
	\begin{itemize}
		\item $u \in H^1_p(J,\,L_p(\Omega))^n \cap L_p(J,\,H^2_p(\Omega))^n$,
		\item $p \in L_p(J,\,\hat{H}^1_p(\Omega))$ for $B = T$, or \\
			$p \in L_p(J,\,\dot{H}^1_p(\Omega))$ with $[p]_\Gamma \in L_p(J,\,W^{1 - 1/p}_p(\Gamma))$ for $B \in \{\,N,\,F\,\}$.
	\end{itemize}
	If $B \in \{\,T,\,F\,\}$, then we additionally have
	\begin{itemize}
		\item $P_\Gamma [u]_\Gamma \in W^{3/2 - 1/2p}_p(J,\,L_p(\Gamma))^n \cap H^1_p(J,\,W^{1 - 1/p}_p(\Gamma))^n \cap L_p(J,\,W^{2 - 1/p}_p(\Gamma))^n$;
	\end{itemize}
	if $B \in \{\,N,\,F\,\}$, then we additionally have
	\begin{itemize}
		\item $[u]_\Gamma \cdot \nu \in H^1_p(J,\,W^{1 - 1/p}_p(\Gamma)) \cap L_p(J,\,W^{2 - 1/p}_p(\Gamma))$.
	\end{itemize}
	The solutions depend continuously on the data in the corresponding spaces.
\end{theorem}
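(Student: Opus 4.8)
The plan is to reduce the wedge problem to the halfspace result \Thmref{Linear-Halfspace} by reflecting across the Navier wall $\Gamma^{\textrm{wall}} = \partial_y \bR^n_{+\!\!\!+}$, while the friction coefficient $\sigma$ is treated as a lower-order perturbation. We therefore first establish the claim for a perfect-slip wall ($\sigma = 0$) with homogeneous wall data $h^{\textrm{wall}} = 0$, and subsequently recover inhomogeneous $h^{\textrm{wall}}$ and general $\sigma \geq 0$.

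For $\sigma = 0$ and $h^{\textrm{wall}} = 0$ we extend $u$ and the data across $\{\,y = 0\,\}$ by reflecting the components tangential to the wall (those in $x$ and $z$) and the pressure evenly in $y$, and the normal component $u_y$ oddly in $y$; the data $f$ is reflected with the same parities as $u$, while $g$ is reflected evenly and $u_0$ with the velocity parities. A direct computation shows that these parities are consistent with the momentum balance and the divergence equation \eqnref*{Interior-Linear} on the full halfspace $\Omega^\prime := \{\,(x,y,z) : z > 0\,\}$. Moreover, the homogeneous slip condition on $\{\,y=0\,\}$ holds automatically: oddness of $u_y$ gives $u \cdot \nu = 0$ and $\partial_x u_y = \partial_z u_y = 0$ there, while evenness of the tangential components gives $\partial_y u_x = \partial_y u_z = 0$ there, so that $P_\Gamma D\nu = 0$. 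Crucially, since $v^{\textrm{out}} = V\nu$ is normal to $\Gamma = \{\,z = 0\,\}$, the operator $(v^{\textrm{out}} \cdot \nabla) = -V \partial_z$ does not affect the $y$-parity, and one checks that each of the conditions $(\textrm{BDO})^{v^{\textrm{out}}, h}_{\alpha, \textrm{Re}}$ for $B \in \{\,T, N, F\,\}$ is left invariant provided $h$ is reflected with the velocity parities. Applying \Thmref{Linear-Halfspace} on $\Omega^\prime$ to the reflected data produces a unique maximal regular solution; by uniqueness this solution inherits the reflection symmetry of the data, so its restriction to the wedge is the sought solution and satisfies \eqnref*{Wall-BC-Linear} with $\sigma = 0$.

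To reach inhomogeneous $h^{\textrm{wall}}$ we first subtract a lifting $w$ in the maximal regularity class that realizes the inhomogeneous slip data on $\Gamma^{\textrm{wall}}$; the remainder $u - w$ then solves a problem with homogeneous wall data to which the reflection applies, the contributions of $w$ being moved into $f$, $g$, $u_0$ and into the outflow datum $h$ on $\Gamma$. Here the edge compatibility conditions \eqnref*{Non-Reflecting-Wall-Compatibility-Tangential}, \eqnref*{Non-Reflecting-Wall-Compatibility-Normal}, respectively \eqnref*{Non-Reflecting-Wall-Compatibility-Full}, enter decisively: together with the auxiliary functions $\xi$ and $\eta$---which encode the a priori unknown parts of the velocity trace on $\Gamma$---they guarantee that the even/odd extension of the modified outflow datum across the edge $\cE = \{\,y = 0,\ z = 0\,\}$ lands in the reflection-symmetric trace spaces required by \Thmref{Linear-Halfspace}, i.e.\ that no spurious edge singularity is created. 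Finally, for $\sigma > 0$ the term $\sigma P_\Gamma u$ in \eqnref*{Wall-BC-Linear} is of lower order than the principal boundary operator $\frac{2}{\textrm{Re}} P_\Gamma D\nu$, since the trace $P_\Gamma [u]_{\Gamma^{\textrm{wall}}}$ gains one spatial and one half temporal derivative over the trace of $\nabla u$; hence the solution operator for $\sigma = 0$ can be perturbed by a Neumann series on a short time interval, and the result is propagated to all of $J$ by the causal structure of the parabolic problem. The additional trace regularity for $P_\Gamma [u]_\Gamma$ and $[u]_\Gamma \cdot \nu$, as well as the pressure trace $[p]_\Gamma$ for $B \in \{\,N, F\,\}$, is inherited directly from \Thmref{Linear-Halfspace} through the reflection.

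The main obstacle is precisely the edge analysis of the previous paragraph: verifying that the reflected and lifting-corrected outflow data satisfy the exact compatibility conditions at $\cE$ so that their even/odd extensions remain in the correct anisotropic Sobolev--Slobodeckij spaces. This requires a careful bookkeeping of the competing time- and space-regularities of the dynamic outflow datum (which carries a boundary time derivative) against those of the Navier wall datum, and is the step where the exclusions $p \neq \frac{3}{2}, 3$ and the distinction of the ranges $p \geq 2$ and $p > \frac{3}{2}, 3$ in the hypotheses---the thresholds at which the compatibility conditions change form---become essential.
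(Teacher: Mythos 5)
Your overall strategy coincides with the paper's: reduce to $\sigma = 0$ by a lower-order perturbation argument, reduce to homogeneous wall data, and then extend evenly/oddly in $y$ across $\Gamma^{\textrm{wall}}$ (tangential components, $w$ and the pressure even, the wall-normal component $v$ odd) so that \Thmref{Linear-Halfspace} applies on the halfspace $\{\,z > 0\,\}$; the parity bookkeeping for the dynamic outflow conditions and the observation that $(v^{\textrm{out}}\cdot\nabla) = V\partial_z$ preserves the $y$-parity are both correct and match Step~2 of the paper's proof.

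However, there is a genuine gap in your reduction to homogeneous wall data, and you concede as much by calling the edge analysis ``the main obstacle'' without resolving it. You posit ``a lifting $w$ in the maximal regularity class that realizes the inhomogeneous slip data on $\Gamma^{\textrm{wall}}$'' and then assert that the residual outflow datum, after absorbing the traces of this lifting, lands in the reflection-compatible trace spaces. A generic lifting of the wall data will not do this: the residual outflow datum must satisfy $[h_v]_y = 0$ on the edge $\cE$ (so that its odd extension in $y$ retains the $W^{1-1/p}_p$ spatial regularity), and the residual divergence and initial data must simultaneously be removed so that the halfspace theorem is applicable to a problem with $f = g = u_0 = 0$. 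The paper resolves precisely this point by taking as the ``lifting'' the solution of the auxiliary wedge problem with the Navier condition on $\Gamma^{\textrm{wall}}$ and \emph{Dirichlet} data $(\xi_u,\,\xi_v,\,\eta)$ on $\Gamma$, furnished by \Thmref{Linear-Wedge-Inflow}; the compatibility conditions \eqnref*{Non-Reflecting-Wall-Compatibility-Tangential}, \eqnref*{Non-Reflecting-Wall-Compatibility-Normal}, \eqnref*{Non-Reflecting-Wall-Compatibility-Full} are exactly what make that auxiliary problem solvable, and subtracting its solution removes $f$, $g$, $u_0$, $h^{\textrm{wall}}$, $\xi$, $\eta$ in one stroke while reducing the edge condition on the remaining outflow datum to $[h_v]_y = 0$ via \eqnref{Wedge-Compatibility-7}. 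Your proposal never invokes \Thmref{Linear-Wedge-Inflow}, so this step is missing rather than merely sketched. A further detail you omit: after this subtraction the pressure trace of the auxiliary solution only lies in $L_p(J,\,\dot{W}^{1-1/p}_p(\Gamma))$, so the residual normal datum $h_w$ has only homogeneous regularity and one must use the relaxed version of the halfspace theorem from \Remref{Linear-Halfspace}~(a) rather than \Thmref{Linear-Halfspace} itself.
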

The proof of \Thmref{Linear-Wedge-Inflow}, which is based on a reflection technique and Theorems~\ref{thm:Linear-Halfspace} and \ref{thm:Linear-Wedge-Inflow}, is carried out in \Secref{Wedge}.
Here, however, some remarks seem to be in order.
\begin{remark}
	\remlabel{Linear-Wedge-Non-Reflecting}
	Again there are some immediate corollaries of \Thmref{Linear-Wedge-Non-Reflecting},
	which we want to mention without elaborate proofs, cf.~\Remref{Linear-Halfspace}.
	\begin{enumerate}[(a)]
		\item If $B \in \{\,N,\,F\,\}$, then the assumptions on the right-hand side of the boundary condition may be relaxed to
			$h \cdot \nu \in L_p(J,\,\dot{W}^{1 - 1/p}_p(\Gamma))$ to obtain a maximal regular solution
			as in \Thmref{Linear-Wedge-Non-Reflecting} with $[p]_\Gamma \in L_p(J,\,\dot{W}^{1 - 1/p}_p(\Gamma))$.
			The argument here is the same as used in \Remref{Linear-Halfspace}~(a) and both formulations of \Thmref{Linear-Wedge-Non-Reflecting} are again equivalent.
		\item One may assume $v^{\textrm{out}}$ to be given based on
			\begin{equation*}
				V \in W^{1 - 1/2p}_p(J,\,L_p(\Gamma)) \cap L_p(J,\,W^{2 - 1/p}_p(\Gamma))
			\end{equation*}
			such that \eqnref{Parameter-Compatibility} is satisfied.
			Indeed, this problem may be reduced to \Thmref{Linear-Wedge-Non-Reflecting} via a localization procedure.
	\end{enumerate}
	Of course, Corollaries (a) and (b) are independent of each other and may be applied simultaneously.
\end{remark}
\begin{remark}
	\remlabel{Non-Linear-Wedge}
	Theorems~\ref{thm:Linear-Wedge-Inflow} and \ref{thm:Linear-Wedge-Non-Reflecting} and the variants in \Remref{Linear-Wedge-Non-Reflecting}
	are the cornerstones to handle realistic models in weakly singular domains $\Omega \subseteq \bR^n$ like the tube in Figure~1
	via localization procedures as presented e.\,g.\ in \cite[Chapter~8]{Koehne:Incompressible-Newtonian-Flows}.
	Based on well-known perturbation arguments, it is then also possible to obtain (local-in-time) strong solutions
	to the corresponding non-linear equations \eqnref*{Interior} with non-linear variants of the dynamic outflow boundary conditions.
\end{remark}

\section{The Halfspace Case}
\seclabel{Halfspace}
This section is devoted to the first step of the proof of \Thmref{Linear-Halfspace},
where the halfspace $\Omega := \bR^n_+$ is considered with $\Gamma := \partial \Omega$.
We assume $a > 0$, set $J := (0,\,a)$, and assume $1 < p < \infty$ with $p \neq \frac{3}{2},\,3$.
Furthermore, we assume $\alpha, \textrm{Re} > 0$ as well as $v^{\textrm{out}} = V \nu$
with $\sigma := \alpha V + \frac{2}{\textrm{Re}} > \kappa := \alpha V + \frac{1}{\textrm{Re}} > 0$.
We exploit the simple geometry of the halfspace and denote by $(x,\,y) \in \bR^{n - 1} \times \bR_+$ the generic point in $\bR^n_+$,
decomposed in its {\itshape tangential} part $x \in \bR^{n - 1}$ and its {\itshape normal} part $y > 0$.
Moreover, we employ the notation $u = (v,\,w)$ to decompose the unkown velocity field into its tangential part
$v: J \times \bR^n_+ \longrightarrow \bR^{n - 1}$ and its normal part $w: J \times \bR^n_+ \longrightarrow \bR$.
Finally, we denote by $[\,\cdot\,]_y: \bR^n_+ \longrightarrow \partial \bR^n_+$ the trace operator for the halfspace and frequently
employ the identification $\partial \bR^n_+ \simeq \bR^{n - 1}$, whenever this seems to be convenient.
The right hand side of the boundary condition is decomposed as $h = (h_v,\,h_w)$ into a tangential part $h_v$ and a normal part $h_w$.
The same splitting is employed for the initial velocity, where we let $u_0 = (v_0,\,w_0)$.

\subsection{The Condition TDO}
\subseclabel{Linear-Halfspace-Proof-Tangential}
We first consider the Stokes equations subject to a dynamic outflow boundary condition in tangential directions,
i.\,e.\ the system \eqnref*{Interior-Linear}, \eqnref*{Tangential-NRBC} which reads
\begin{equation}
	\eqnlabel{Linear-Halfspace-Tangential}
	\begin{array}{rclll}
		                                                            \partial_t u - \frac{1}{\textrm{Re}} \Delta u + \nabla p & = & f   & \quad \mbox{in} & J \times \bR^n_+,     \\[0.5em]
		                                                                                                       \mbox{div}\,u & = & g   & \quad \mbox{in} & J \times \bR^n_+,     \\[0.5em]
		\alpha \partial_t [v]_y - (\alpha V + \frac{1}{\textrm{Re}}) [\partial_y v]_y - \frac{1}{\textrm{Re}} \nabla_x [w]_y & = & h_v & \quad \mbox{on} & J \times \bR^{n - 1}, \\[0.5em]
		                                                                                                               [w]_y & = & h_w & \quad \mbox{on} & J \times \bR^{n - 1}, \\[0.5em]
		                                                                                                                u(0) & = & u_0 & \quad \mbox{in} & \bR^n_+.
	\end{array}
\end{equation}
Here, we require the data to satisfy the regularity and compatibility conditions as stated in \Thmref{Linear-Halfspace}, i.\,e.\ we have
\begin{itemize}
	\item $f \in L_p(J \times \bR^n_+)^n$,
	\item $g \in H^{1/2}_p(J,\,L_p(\bR^n_+)) \cap L_p(J,\,H^1_p(\bR^n_+))$,
	\item $h_v \in W^{1/2 - 1/2p}_p(J,\,L_p(\bR^{n - 1}))^{n - 1} \cap L_p(J,\,W^{1 - 1/p}_p(\bR^{n - 1}))^{n - 1}$,
	\item $h_w \in W^{1 - 1/2p}_p(J,\,L_p(\bR^{n - 1})) \cap L_p(J,\,W^{2 - 1/p}_p(\bR^{n - 1}))$,
	\item $u_0 \in W^{2 - 2/p}_p(\bR^n_+)^n$ with $\textrm{\upshape div}\,u_0 = g(0)$ in $\bR^n_+$ for $p \geq 2$,
	\item $F(g,\,- h_w) \in H^1_p(J,\,\hat{H}^{-1}_p(\bR^n_+))$,
	\item $[v_0]_y \in W^{2 - 2/p}_p(\bR^{n - 1})^{n - 1}$, and $[w_0]_y = h_w(0)$ for $p > \frac{3}{2}$.
\end{itemize}
The construction of a solution to \eqnref{Linear-Halfspace-Tangential} requires several Steps.

\subsection*{Step 1}
As a first step we show that we may w.\,l.\,o.\,g.\ assume $f = 0$, $g = 0$, $h_v = 0$ and $u_0 = 0$ in the following.
Indeed, based on the compatibility condition between $g$ and $- h_w$,
we may employ \cite[Proposition~3.6]{Bothe-Koehne-Pruess:Energy-Preserving-Boundary-Conditions}
to obtain $q \in L_p(J,\,{}_0 \dot{H}^1_p(\bR^n_+))$ such that $- \mbox{div}\,\nabla q = (\partial_t - \frac{1}{Re} \Delta) g$ in the sense of distributions.
Then we solve the parabolic system with dynamic boundary conditions
\begin{equation*}
	\begin{array}{rclll}
		                                \partial_t u - \frac{1}{\textrm{Re}} \Delta u & = & \cW_p f - \nabla q                         & \quad \mbox{in} & J \times \bR^n_+      \\[0.5em]
		\alpha \partial_t [v]_y - (\alpha V + \frac{1}{\textrm{Re}}) [\partial_y v]_y & = & h_v + \frac{1}{\textrm{Re}} \nabla_x [w]_y & \quad \mbox{on} & J \times \bR^{n - 1}, \\[0.5em]
		                                                             [\partial_y w]_y & = & [g]_y - \mbox{div}_x\,[v]_y                & \quad \mbox{on} & J \times \bR^{n - 1}, \\[0.5em]
		                                                                         u(0) & = & u_0                                        & \quad \mbox{in} & \bR^n_+

	\end{array}
\end{equation*}
to obtain a unique solution $u$ in the desired regularity class via \Propref{Parabolic-DBC-Standard}.
Here, we employ the Weyl projection $\cW: L_p(\bR^n_+)^n \longrightarrow L_p(\bR^n_+)$ that belongs to the topological decomposition
$L_p(\bR^n_+)^n = L_{p, s}(\bR^n_+) \oplus \nabla {}_0 \dot{H}^1_p(\bR^n_+)$ into
\begin{equation*}
	L_{p, s}(\bR^n_+) := \Big\{\,\phi \in L_p(\bR^n_+)^n\,:\,\mbox{div}\,\phi = 0\,\Big\}, \qquad
	{}_0 \dot{H}^1_p(\bR^n_+) := \Big\{\,\psi \in \dot{H}^1_p(\bR^n_+)\,:\,[\psi]_y = 0\,\Big\},
\end{equation*}
see e.\,g.\ \cite[Section~3]{Bothe-Koehne-Pruess:Energy-Preserving-Boundary-Conditions}.
If we then define $p \in L_p(J,\,{}_0 \dot{H}^1_p(\bR^n))$ via $\nabla p = \nabla q + (1 - \cW_p) f$, then
\begin{equation*}
	\begin{array}{rclll}
		                                                            \partial_t u - \frac{1}{\textrm{Re}} \Delta u + \nabla p & = & f   & \quad \mbox{in} & J \times \bR^n_+,     \\[0.5em]
		\alpha \partial_t [v]_y - (\alpha V + \frac{1}{\textrm{Re}}) [\partial_y v]_y - \frac{1}{\textrm{Re}} \nabla_x [w]_y & = & h_v & \quad \mbox{on} & J \times \bR^{n - 1}, \\[0.5em]
		                                                                                                                u(0) & = & u_0 & \quad \mbox{in} & \bR^n_+.
	\end{array}
\end{equation*}
Moreover, we have by construction
\begin{equation*}
	\begin{array}{rclll}
		\partial_t \gamma - \frac{1}{\textrm{Re}} \Delta \gamma & = & 0 & \quad \mbox{in} & J \times \bR^n_+,     \\[0.5em]
		                                             [\gamma]_y & = & 0 & \quad \mbox{on} & J \times \bR^{n - 1}, \\[0.5em]
		                                              \gamma(0) & = & 0 & \quad \mbox{in} & \bR^n_+
	\end{array}
\end{equation*}
for $\gamma = \mbox{div}\,u - g \in BC(J,\,W^{1 - 1/p}_p(\bR^n_+)) \hookrightarrow BC(J,\,L_p(\bR^n_+))$,
which implies $\gamma = 0$ by uniqueness of weak solutions to the diffusion equation with Dirichlet boundary condition,
see also the proof of \cite[Theorem~3.6]{Bothe-Koehne-Pruess:Energy-Preserving-Boundary-Conditions}.
Thus, $\mbox{div}\,u = g$.
Hence, we may assume $f = 0$, $g = 0$, $h_v = 0$ and $u_0 = 0$.
Note that in this case the compatibility condition between the right-hand side of the divergence equation
and the normal boundary condition implies
\begin{equation*}
	h_w \in {}_0 H^1_p(J,\,\dot{W}^{-1/p}_p(\bR^{n - 1})) \cap {}_0 W^{1 - 1/2p}_p(J,\,L_p(\bR^{n - 1})) \cap L_p(J,\,W^{2 - 1/p}_p(\bR^{n - 1})),
\end{equation*}
which we will assume from now on.

\subsection*{Step 2}
In order to solve the remaining problem, we will employ a Laplace transformation in time and a Fourier transformation in the tangential part of the spatial variables.
Since this is only possible for an unbounded time interval, we will from now on consider the shifted problem
\begin{equation}
	\eqnlabel{Linear-Halfspace-Tangential-Reduced}
	\begin{array}{rclll}
		                                                                       \epsilon u + \partial_t u - \frac{1}{\textrm{Re}} \Delta u + \nabla p & = & 0   & \quad \mbox{in} & \bR_+ \times \bR^n_+,     \\[0.5em]
		                                                                                                                               \mbox{div}\,u & = & 0   & \quad \mbox{in} & \bR_+ \times \bR^n_+,     \\[0.5em]
		\alpha \epsilon [v]_y + \alpha \partial_t [v]_y - (\alpha V + \frac{1}{\textrm{Re}}) [\partial_y v]_y - \frac{1}{\textrm{Re}} \nabla_x [w]_y & = & 0   & \quad \mbox{on} & \bR_+ \times \bR^{n - 1}, \\[0.5em]
		                                                                                                                                       [w]_y & = & h_w & \quad \mbox{on} & \bR_+ \times \bR^{n - 1}, \\[0.5em]
		                                                                                                                                        u(0) & = & 0   & \quad \mbox{in} & \bR^n_+.
	\end{array}
\end{equation}
for an arbitrary $\epsilon > 0$.
Note that maximal regularity for this problem is equivalent to maximal regularity of the original problem (i.\,e.\ for $\epsilon = 0$) on finite time intervals $J = (0,\,a)$.
The strategy to construct a solution to \eqnref{Linear-Halfspace-Tangential-Reduced} is as follows:
We compute the pressure derivative $- [\partial_y p]_y = \Pi h_w$ and show that it is given based on a bounded linear operator
\begin{equation}
	\eqnlabel{Halfspace-Mappings-Tangential}
	\begin{array}{l}
		\Pi: {}_0 H^1_p(\bR_+,\,\dot{W}^{-1/p}_p(\bR^{n - 1})) \cap {}_0 W^{1 - 1/2p}_p(\bR_+,\,L_p(\bR^{n - 1})) \cap L_p(\bR_+,\,W^{2 - 1/p}_p(\bR^{n - 1})) \\[0.5em]
			\qquad \qquad \qquad \qquad \longrightarrow L_p(\bR_+,\,\dot{W}^{-1/p}_p(\bR^{n - 1})).
	\end{array}
\end{equation}
Then we obtain the pressure $p \in L_p(\bR_+,\,\hat{H}^1_p(\bR^n_+))$ as a solution to the (weak) elliptic problem
\begin{equation*}
	\begin{array}{rclll}
		        - \Delta p & = & 0       & \quad \mbox{in} & \bR_+ \times \bR^n_+,     \\[0.5em]
		- [\partial_y p]_y & = & \Pi h_w & \quad \mbox{on} & \bR_+ \times \bR^{n - 1},
	\end{array}
\end{equation*}
cf.~\cite[Proposition~3.3]{Bothe-Koehne-Pruess:Energy-Preserving-Boundary-Conditions}.
Finally, we obtain $u$ as a maximal regular solution to the parabolic problem
\begin{equation*}
	\begin{array}{rclll}
		                                           \epsilon u + \partial_t u - \frac{1}{\textrm{Re}} \Delta u & = & - \nabla p                         & \quad \mbox{in} & \bR_+ \times \bR^n_+,     \\[0.5em]
		\alpha \epsilon [v]_y + \alpha \partial_t [v]_y - (\alpha V + \frac{1}{\textrm{Re}}) [\partial_y v]_y & = & \frac{1}{\textrm{Re}} \nabla_x h_w & \quad \mbox{on} & \bR_+ \times \bR^{n - 1}, \\[0.5em]
		                                                                                                [w]_y & = & h_w                                & \quad \mbox{on} & \bR_+ \times \bR^{n - 1}, \\[0.5em]
		                                                                                                 u(0) & = & 0                                  & \quad \mbox{in} & \bR^n_+
	\end{array}
\end{equation*}
via \Propref{Parabolic-DBC-Standard}.

\subsubsection*{Step 2.1}
We compute the Laplace-Fourier symbol of $\Pi$.
The transformed equations \eqnref{Linear-Halfspace-Tangential-Reduced} read:
\begin{equation*}
	\begin{array}{rcllll}
		                                 \omega^2 \hat v - \frac{1}{\textrm{Re}} \partial_y^2 \hat v + i\xi \hat p & = & 0        & \quad \lambda \in \Sigma_{\pi - \theta}, & \xi \in \bR^{n - 1}, & y > 0, \\[0.5em]
		                           \omega^2 \hat w - \frac{1}{\textrm{Re}} \partial_y^2 \hat w + \partial_y \hat p & = & 0        & \quad \lambda \in \Sigma_{\pi - \theta}, & \xi \in \bR^{n - 1}, & y > 0, \\[0.5em]
		                                                                    i \xi \cdot \hat v + \partial_y \hat w & = & 0        & \quad \lambda \in \Sigma_{\pi - \theta}, & \xi \in \bR^{n - 1}, & y > 0, \\[0.5em]
		\alpha \lambda_\epsilon [\hat v]_y - \kappa [\partial_y \hat v]_y - \frac{1}{\textrm{Re}} i \xi [\hat w]_y & = & 0        & \quad \lambda \in \Sigma_{\pi - \theta}, & \xi \in \bR^{n - 1},          \\[0.5em]
		                                                                                                [\hat w]_y & = & \hat h_w & \quad \lambda \in \Sigma_{\pi - \theta}, & \xi \in \bR^{n - 1},
	\end{array}
\end{equation*}
where $\hat v$, $\hat w$, $\hat p$ and $\hat h_w$ denote the transformed quantities, $\lambda \in \Sigma_{\pi - \theta}$ with $0 < \theta < \frac{\pi}{2}$
denotes the Laplace co-variable of $t$, where $\Sigma_\psi := \{\,z \in \bC \setminus \{\,0\,\}\,:\,|\mbox{arg}\,z| < \psi\,\}$ for $0 < \psi < \pi$,
and $\xi \in \bR^{n - 1}$ denotes the Fourier co-variable of $x$.
Moreover, we use the abbreviations
\begin{equation*}
	\lambda_\epsilon := \epsilon + \lambda, \qquad \omega := \sqrt{\lambda_\epsilon + |\zeta|^2}, \qquad \zeta := {\textstyle \frac{1}{\sqrt{\textrm{Re}}}} \xi
\end{equation*}
The first three equations above are ordinary differential equations for $y>0$, whose solutions admit a representation by linear combinations of fundamental solutions as
\begin{subequations}
\begin{equation}
	\eqnlabel{Halfspace-Generic-Solution}
	\left[ \begin{array}{c} \hat v(\lambda,\,\xi,\,y) \\[0.5em] \hat w(\lambda,\,\xi,\,y) \\[0.5em] \hat p(\lambda,\,\xi,\,y) \end{array} \right]
		= \left[ \begin{array}{rr} \omega & - i \zeta \\[0.5em] i \zeta^\sfT & |\zeta| \\[0.5em] 0 & \quad \frac{1}{\sqrt{\textrm{Re}}} \lambda_\epsilon \end{array} \right]
		  \left[ \begin{array}{c} \hat{\tau}_v(\lambda,\,\xi) e^{- \sqrt{\textrm{Re}}\,\omega y} \\[0.5em] \hat{\tau}_w(\lambda,\,\xi) e^{- \sqrt{\textrm{Re}}\,|\zeta| y} \end{array} \right]
\end{equation}
for a function $\tau = (\tau_v,\,\tau_w): \bR_+ \times \bR^{n - 1} \longrightarrow \bR^n$, which has to be determined based on the boundary conditions.
Due to \eqnref{Halfspace-Generic-Solution} we have
\begin{equation}
	\eqnlabel{Halfspace-Generic-Boundary}
	\begin{array}{rclcrcl}
		           [\hat{v}]_y & = & \omega \hat{\tau}_v - i \zeta \hat{\tau}_w,                                                     & \qquad &              [\hat{w}]_y & = & i \zeta^\sfT \hat{\tau}_v + |\zeta| \hat{\tau}_w,                                                             \\[0.5em]
		[\partial_y \hat{v}]_y & = & - \sqrt{\textrm{Re}}\,\omega^2 \hat{\tau}_v + \sqrt{\textrm{Re}}\,|\zeta| i \zeta \hat{\tau}_w, & \qquad &   [\partial_y \hat{w}]_y & = & - \sqrt{\textrm{Re}}\,\omega i \zeta^\sfT \hat{\tau}_v - \sqrt{\textrm{Re}}\,|\zeta|^2 \hat{\tau}_w,          \\[0.5em]
		           [\hat{p}]_y & = & \frac{1}{\sqrt{\textrm{Re}}} \lambda_\epsilon \hat{\tau}_w,                                     & \qquad & \widehat{\nabla_x [w]_y} & = & \sqrt{\textrm{Re}}\,(i \zeta \otimes i \zeta) \hat{\tau}_v + \sqrt{\textrm{Re}}\,|\zeta| i \zeta \hat{\tau}_w
	\end{array}
\end{equation}
and, thus, the boundary conditions read
\begin{equation*}
	\left[ \begin{array}{rr} \underbrace{\alpha \sqrt{\textrm{Re}}\,\lambda_\epsilon \omega + \textrm{Re}\,\kappa \omega^2 - (i \zeta \otimes i \zeta)}_{=: B(\lambda, |\zeta|)} & \quad - \underbrace{(\alpha \sqrt{\textrm{Re}}\,\lambda_\epsilon + \textrm{Re}\,\kappa |\zeta| + |\zeta|)}_{=: \beta(\lambda, |\zeta|)} i \zeta \\[3.0em] i \zeta^\sfT & |\zeta| \end{array} \right]
		\left[ \begin{array}{c} \hat{\tau}_v \\[0.5em] \hat{\tau}_w \end{array} \right]
		= \left[ \begin{array}{c} 0 \\[0.5em] \hat{h}_w \end{array} \right]
\end{equation*}
and we obtain $\hat{\tau}_w = (|\zeta| + \beta i \zeta^\sfT B^{-1} i \zeta)^{-1} \hat{h}_w$.
Now,
\begin{equation}
	\eqnlabel{Halfspace-Generic-Inverse}
	B^{-1}(\lambda,\,|\zeta|) = \frac{1}{\phi(\lambda,\,|\zeta|)} \left( 1 + \frac{i \zeta \otimes i \zeta}{\phi(\lambda,\,|\zeta|) + |\zeta|^2} \right), \qquad
		\phi(\lambda,\,|\zeta|) = \alpha \sqrt{\textrm{Re}}\,\lambda_\epsilon \omega + \textrm{Re}\,\kappa \omega^2,
\end{equation}
\end{subequations}
which implies that
\begin{equation*}
	|\zeta| + \beta i \zeta^\sfT B^{-1} i \zeta
		= |\zeta| + \frac{\beta}{\phi} \left( \frac{|\zeta|^4}{\phi + |\zeta|^2} - |\zeta|^2 \right)
		= |\zeta| - \frac{\beta |\zeta|^2}{\phi + |\zeta|^2}
		= |\zeta| \frac{\phi + |\zeta|^2 - \beta |\zeta|}{\phi + |\zeta|^2}
\end{equation*}
and, hence,
\begin{equation*}
	\begin{array}{rcl}
		\widehat{\Pi h_w}
			& = & \lambda_\epsilon |\zeta| \hat{\tau}_w = \frac{\displaystyle \alpha \sqrt{\textrm{Re}}\,\lambda_\epsilon \omega + \textrm{Re}\,\kappa \omega^2 + |\zeta|^2}{\displaystyle \alpha \sqrt{\textrm{Re}}\,\lambda_\epsilon + \textrm{Re}\,\kappa(\omega + |\zeta|)} (\omega + |\zeta|) \hat{h}_w \\[1.5em]
			& = & \bigg[\,\underbrace{\frac{\displaystyle \alpha \sqrt{\textrm{Re}}\,\lambda_\epsilon}{\displaystyle \alpha \sqrt{\textrm{Re}}\,\lambda_\epsilon + \textrm{Re}\,\kappa(\omega + |\zeta|)}}_{=: m_1(\lambda,\,|\zeta|)} \ + \ \underbrace{\frac{\displaystyle \textrm{Re}\,\kappa \omega}{\displaystyle \alpha \sqrt{\textrm{Re}}\,\lambda_\epsilon + \textrm{Re}\,\kappa(\omega + |\zeta|)}}_{=: m_2(\lambda,\,|\zeta|)}\,\bigg] \omega (\omega + |\zeta|) \hat{h}_w \\[3.5em]
			&   & \qquad \qquad + \ \bigg[\,\underbrace{\frac{\displaystyle \textrm{Re}\,\kappa |\zeta|}{\displaystyle \alpha \sqrt{\textrm{Re}}\,\lambda_\epsilon + \textrm{Re}\,\kappa(\omega + |\zeta|)}}_{=: m_3(\lambda,\,|\zeta|)}\,\bigg] \bigg[ \underbrace{\displaystyle \frac{|\zeta|}{\phantom{\sqrt{\textrm{Re}} (\lambda_\epsilon + |\zeta|)} \hspace*{-6em} \textrm{Re}\,\kappa \omega}}_{=: \mu(\lambda,\,|\zeta|)} \bigg] \omega (\omega + |\zeta|) \hat{h}_w.
	\end{array}
\end{equation*}
Therefore, on a symbolic level we have
\begin{equation*}
	\Pi \sim \big( m_1(\lambda,\,|\zeta|) + m_2(\lambda,\,|\zeta|) + m_3(\lambda,\,|\zeta|) \mu(\lambda,\,|\zeta|) \big) \omega (\omega + |\zeta|) =: M(\lambda,\,|\zeta|),
\end{equation*}
which is the desired representation of $\Pi$.

\subsubsection*{Step 2.2}
Based on the above considerations we have $\Pi = \mathrm{Op}(M)$ and, thus,
the mapping properties of $\Pi$ may be derived by studying its Fourier-Laplace symbol $M$.
First note that
\begin{equation*}
	G := \mathrm{Op}(\lambda) = \partial_t \qquad \textrm{and} \qquad D := \mathrm{Op}(|\zeta|) = \sqrt{- {\textstyle \frac{1}{\textrm{Re}}} \Delta_{\bR^{n - 1}}}
\end{equation*}
each admit an $\cR$-bounded $\cH^\infty$-calculus with $\cR\cH^\infty$-angles $\phi^\infty_G = \frac\pi2$ and $\phi^\infty_D = 0$, respectively,
within the scales ${}_0 \cJ^r_p(\bR_+,\,\cK^s_p(\bR^{n - 1}))$ and
${}_0 \cJ^r_p(\bR_+,\,\dot{\cK}^s_p(\bR^{n - 1}))$ for $\cJ,\,\cK \in \{\,H,\,W\,\}$, $r \geq 0$, and $s \in \bR$
see e.\,g.\ \cite[Corollary~2.10]{Denk-Saal-Seiler:Newton-Polygon}.
This combined with \cite[Theorem~6.1]{Kalton-Weis:Operator-Sums} implies that the pair $(G,\,D)$
admits a joint $\cH^\infty(\Sigma_{\pi - \theta} \times \Sigma_{\theta / 2})$-calculus for every $0 < \theta < \frac{\pi}{2}$.
Now, it has been proved as part of \cite[Theorem~2.3]{Bothe-Koehne-Pruess:Energy-Preserving-Boundary-Conditions}
that the operator $\mathrm{Op}(\omega(\omega + |\zeta|))$ has the mapping properties \eqnref{Halfspace-Mappings-Tangential},
\mbox{cf.\ \cite[Section~4, The Case $\alpha = $ and $\beta = 0$]{Bothe-Koehne-Pruess:Energy-Preserving-Boundary-Conditions}}.
Thus, it remains to prove that the functions
\begin{equation*}
	(\lambda,\,z) \mapsto m_j(\lambda,\,z),\ \mu(\lambda,\,z): \Sigma_{\pi - \theta} \times \Sigma_{\theta / 2} \longrightarrow \bC, \qquad j = 1,\,2,\,3
\end{equation*}
are bounded and holomorphic for some $0 < \theta < \frac{\pi}{2}$;
this implies the operators $\mathrm{Op}(m_j) = m_j(G,\,D)$ for $j = 1,\,2,\,3$ and $\mathrm{Op}(\mu) = \mu(G,\,D)$ to be bounded
within the above scales of function spaces.

Since $\mu$ is clearly bounded and holomorphic in $\Sigma_{\pi - \theta} \times \Sigma_{\theta / 2}$,
we restrict our considerations to the $m_j$ for $j = 1,\,2,\,3$.
It sufficies to prove that the reciprocals
\begin{equation*}
	m^{-1}_1 = 1 + {\textstyle \frac{\textrm{Re}\,\kappa(\omega(z) + z)}{\alpha \sqrt{\textrm{Re}}\,\lambda_\epsilon}}, \qquad
	m^{-1}_2 = 1 + {\textstyle \frac{\alpha \sqrt{\textrm{Re}}\,\lambda_\epsilon + \textrm{Re}\,\kappa z}{\textrm{Re}\,\kappa \omega(z)}}, \qquad
	m^{-1}_3 = 1 + {\textstyle \frac{\alpha \sqrt{\textrm{Re}}\,\lambda_\epsilon + \textrm{Re}\,\kappa \omega(z)}{\textrm{Re}\,\kappa z}},
\end{equation*}
with $\omega(z) = \sqrt{\lambda_\epsilon + z^2}$ are uniformly away from the origin for $(\lambda,\,z) \in \Sigma_{\pi - \theta} \times \Sigma_{\theta / 2}$.
Here we exploit the following elementary but useful fact:
if $|\arg z_1|,\,|\arg z_2|,\,|\arg z_1 - \arg z_2| < \pi$ then
\begin{equation*}
	\min\{\,\arg z_1,\,\arg z_2\,\} \leq \arg(z_1 + z_2) \leq \max\{\,\arg z_1,\,\arg z_2\,\}.
\end{equation*}
Thus, for $(\lambda,\,z) \in \Sigma_{\pi - \theta} \times \Sigma_{\theta / 2}$ with $\arg \lambda \geq 0$ we have
\begin{equation*}
	- \theta < \arg(\lambda_\epsilon + z),\,\arg(\lambda_\epsilon + z^2) < \pi - \theta, \qquad
	- {\textstyle \frac{\theta}{2}} < \arg \omega(z),\,\arg(\omega(z) + z) < {\textstyle \frac{\pi}{2}} - {\textstyle \frac{\theta}{2}},
\end{equation*}
which implies
\begin{equation*}
	\begin{array}{c}
		-\pi + {\textstyle \frac{\theta}{2}} < \arg {\textstyle \frac{\textrm{Re}\,\kappa(\omega(z) + z)}{\alpha \sqrt{\textrm{Re}}\,\lambda_\epsilon}} < {\textstyle \frac{\pi}{2}} - {\textstyle \frac{\theta}{2}}, \qquad
		- {\textstyle \frac{\pi}{2}} - {\textstyle \frac{\theta}{2}} < \arg {\textstyle \frac{\alpha \sqrt{\textrm{Re}}\,\lambda_\epsilon + \textrm{Re}\,\kappa z}{\textrm{Re}\,\kappa \omega(z)}} < \pi - {\textstyle \frac{\theta}{2}}, \\[1.5em]
		- {\textstyle \frac{3 \theta}{2}} < \arg {\textstyle \frac{\alpha \sqrt{\textrm{Re}}\,\lambda_\epsilon + \textrm{Re}\,\kappa \omega(z)}{\textrm{Re}\,\kappa z}} < \pi - {\textstyle \frac{\theta}{2}},
	\end{array}
\end{equation*}
i.\,e.\ $m^{-1}_j$ is indeed uniformly away from the origin for $j = 1,\,2,\,3$.
By symmetry, we obtain the same behavior for $\arg \lambda \leq 0$, which shows that $\Pi$ has the mapping properties \eqnref{Halfspace-Mappings-Tangential}.
This finishes the proof of \Thmref{Linear-Halfspace} for the boundary condition \eqnref*{Tangential-NRBC}.

\subsection{The Condition NDO}
\subseclabel{Linear-Halfspace-Proof-Normal}
Now we consider the Stokes equations subject to a dynamic outflow boundary condition in normal directions,
i.\,e.\ the system \eqnref*{Interior-Linear}, \eqnref*{Normal-NRBC} which reads
\begin{equation}
	\eqnlabel{Linear-Halfspace-Normal}
	\begin{array}{rclll}
		                             \partial_t u - \frac{1}{\textrm{Re}} \Delta u + \nabla p & = & f   & \quad \mbox{in} & J \times \bR^n_+,     \\[0.5em]
		                                                                        \mbox{div}\,u & = & g   & \quad \mbox{in} & J \times \bR^n_+,     \\[0.5em]
		                                                                                [v]_y & = & h_v & \quad \mbox{on} & J \times \bR^{n - 1}, \\[0.5em]
		\alpha \partial_t [w]_y - (\alpha V + \frac{2}{\textrm{Re}}) [\partial_y w]_y + [p]_y & = & h_w & \quad \mbox{on} & J \times \bR^{n - 1}, \\[0.5em]
		                                                                                 u(0) & = & u_0 & \quad \mbox{in} & \bR^n_+.
	\end{array}
\end{equation}
We again require the regularity and compatibility conditions as stated in \Thmref{Linear-Halfspace}, i.\,e.\ we have
\begin{itemize}
	\item $f \in L_p(J \times \bR^n_+)^n$,
	\item $g \in H^{1/2}_p(J,\,L_p(\bR^n_+)) \cap L_p(J,\,H^1_p(\bR^n_+))$,
	\item $h_v \in W^{1 - 1/2p}_p(J,\,L_p(\bR^{n - 1}))^{n - 1} \cap L_p(J,\,W^{2 - 1/p}_p(\bR^{n - 1}))^{n - 1}$,
	\item $h_w \in L_p(J,\,W^{1 - 1/p}_p(\bR^{n - 1}))$,
	\item $u_0 \in W^{2 - 2/p}_p(\bR^n_+)^n$ with $\textrm{\upshape div}\,u_0 = g(0)$ in $\bR^n_+$ for $p \geq 2$,
		\item $F(g,\,\eta) \in H^1_p(J,\,\hat{H}^{-1}_p(\Omega))$ for some \\
			$\eta \in H^1_p(J,\,W^{1 - 1/p}_p(\Gamma)) \cap L_p(J,\,W^{2 - 1/p}_p(\Gamma))$
			with $[w_0]_y = \eta(0)$ for $p > \frac{3}{2}$,
	\item $[v_0]_y = h_v(0)$ for $p > \frac{3}{2}$.
\end{itemize}
The construction of a solution to \eqnref{Linear-Halfspace-Normal} requires several Steps.

\subsection*{Step 1}
As a first step we again show that we may w.\,l.\,o.\,g.\ assume $f = 0$, $g = 0$, $h_v = 0$ and $u_0 = 0$ in the following.
Indeed, we may solve the Stokes equations with Dirichlet boundary conditions
\begin{equation*}
	\begin{array}{rclll}
		\partial_t u - \frac{1}{\textrm{Re}} \Delta u + \nabla p & = & f    & \quad \mbox{in} & J \times \bR^n_+,     \\[0.5em]
		                                           \mbox{div}\,u & = & g    & \quad \mbox{in} & J \times \bR^n_+,     \\[0.5em]
		                                                   [v]_y & = & h_v  & \quad \mbox{on} & J \times \bR^{n - 1}, \\[0.5em]
		                                                   [w]_y & = & \eta & \quad \mbox{on} & J \times \bR^{n - 1}, \\[0.5em]
		                                                    u(0) & = & u_0  & \quad \mbox{in} & \bR^n_+.
	\end{array}
\end{equation*}
to obtain a unique solution in the desired regularity class using well-known results on maximal regularity of the Stokes equations,
see e.\,g.\ \cite[Theorem~2.3]{Bothe-Koehne-Pruess:Energy-Preserving-Boundary-Conditions}.
This immediately leads to the desired reduction.
Note, however, that we now have to assume $h_w \in L_p(J,\,\dot{W}^{1 - 1/p}_p(\bR^{n - 1}))$
to obtain a pressure $p \in L_p(J,\,\dot{H}^1_p(\bR^n_+))$ without additional regularity for $[p]_y$.

\subsection*{Step 2}
In order to solve the remaining problem, we will again employ a Laplace transformation in time and a Fourier transformation in the tangential part of the spatial variables.
This is again only possible for an unbounded time interval, i.\,e.\ we will from now on consider the shifted problem
\begin{equation}
	\eqnlabel{Linear-Halfspace-Normal-Reduced}
	\begin{array}{rclll}
		                                        \epsilon u + \partial_t u - \frac{1}{\textrm{Re}} \Delta u + \nabla p & = & 0   & \quad \mbox{in} & \bR_+ \times \bR^n_+,     \\[0.5em]
		                                                                                                \mbox{div}\,u & = & 0   & \quad \mbox{in} & \bR_+ \times \bR^n_+,     \\[0.5em]
		                                                                                                        [v]_y & = & 0   & \quad \mbox{on} & \bR_+ \times \bR^{n - 1}, \\[0.5em]
		\alpha \epsilon [w]_y + \alpha \partial_t [w]_y - (\alpha V + \frac{2}{\textrm{Re}}) [\partial_y w]_y + [p]_y & = & h_w & \quad \mbox{on} & \bR_+ \times \bR^{n - 1}, \\[0.5em]
		                                                                                                         u(0) & = & 0   & \quad \mbox{in} & \bR^n_+
	\end{array}
\end{equation}
for an arbitrary $\epsilon > 0$.
Note that maximal regularity for this problem is again equivalent to maximal regularity of the original problem (i.\,e.\ for $\epsilon = 0$) on finite time intervals $J = (0,\,a)$.
The strategy to construct a solution to \eqnref{Linear-Halfspace-Normal-Reduced} is as follows:
We compute the pressure trace $[p]_y = \Pi h_w$ as well as $h_w - [p]_y = \Sigma h_w$ and show that these are given based on bounded linear operators
\begin{equation}
	\eqnlabel{Halfspace-Mappings-Normal}
	\begin{array}{rl}
		\Sigma: & L_p(\bR_+,\,\dot{W}^{1 - 1/p}_p(\bR^{n - 1})) \longrightarrow L_p(\bR_+,\,W^{1 - 1/p}_p(\bR^{n - 1})),      \\[0.5em]
		   \Pi: & L_p(\bR_+,\,\dot{W}^{1 - 1/p}_p(\bR^{n - 1})) \longrightarrow L_p(\bR_+,\,\dot{W}^{1 - 1/p}_p(\bR^{n - 1})).
	\end{array}
\end{equation}
Then we obtain the pressure $p \in L_p(\bR_+,\,\dot{H}^1_p(\bR^n_+))$ as a solution to the (weak) elliptic problem
\begin{equation*}
	\begin{array}{rclll}
		- \Delta p & = & 0       & \quad \mbox{in} & \bR_+ \times \bR^n_+,     \\[0.5em]
		     [p]_y & = & \Pi h_w & \quad \mbox{on} & \bR_+ \times \bR^{n - 1},
	\end{array}
\end{equation*}
cf.~\cite[Proposition~3.1]{Bothe-Koehne-Pruess:Energy-Preserving-Boundary-Conditions}.
Finally, we obtain $u$ as a maximal regular solution to the parabolic problem
\begin{equation*}
	\begin{array}{rclll}
		                                           \epsilon u + \partial_t u - \frac{1}{\textrm{Re}} \Delta u & = & - \nabla p & \quad \mbox{in} & \bR_+ \times \bR^n_+,     \\[0.5em]
		                                                                                                [v]_y & = & 0          & \quad \mbox{on} & \bR_+ \times \bR^{n - 1}, \\[0.5em]
		\alpha \epsilon [w]_y + \alpha \partial_t [w]_y - (\alpha V + \frac{2}{\textrm{Re}}) [\partial_y w]_y & = & \Sigma h_w & \quad \mbox{on} & \bR_+ \times \bR^{n - 1}, \\[0.5em]
		                                                                                                 u(0) & = & 0          & \quad \mbox{in} & \bR^n_+
	\end{array}
\end{equation*}
via \Propref{Parabolic-DBC-Low-Regularity}.

\subsubsection*{Step 2.1}
We compute the symbols of $\Sigma$ and $\Pi$.
The transformed equations \eqnref{Linear-Halfspace-Normal-Reduced} read:
\begin{equation*}
	\begin{array}{rcllll}
		      \omega^2 \hat v - \frac{1}{\textrm{Re}} \partial_y^2 \hat v + i\xi \hat p & = & 0         & \quad \lambda \in \Sigma_{\pi - \theta}, & \xi \in \bR^{n - 1}, & y > 0, \\[0.5em]
		\omega^2 \hat w - \frac{1}{\textrm{Re}} \partial_y^2 \hat w + \partial_y \hat p & = & 0         & \quad \lambda \in \Sigma_{\pi - \theta}, & \xi \in \bR^{n - 1}, & y > 0, \\[0.5em]
		                                         i \xi \cdot \hat v + \partial_y \hat w & = & 0         & \quad \lambda \in \Sigma_{\pi - \theta}, & \xi \in \bR^{n - 1}, & y > 0, \\[0.5em]
		                                                                     [\hat v]_y & = & 0         & \quad \lambda \in \Sigma_{\pi - \theta}, & \xi \in \bR^{n - 1},          \\[0.5em]
		 \alpha \lambda_\epsilon [\hat w]_y - \sigma [\partial_y \hat w]_y + [\hat p]_y & = & \hat{h}_w & \quad \lambda \in \Sigma_{\pi - \theta}, & \xi \in \bR^{n - 1},          \\[0.5em]
	\end{array}
\end{equation*}
where we used the same notations as in the \Subsecref{Linear-Halfspace-Proof-Tangential}.
We again employ the ansatz \eqnref{Halfspace-Generic-Solution}
and due to \eqnref{Halfspace-Generic-Boundary} and the divergence equation the boundary conditions read
\begin{equation*}
	\left[ \begin{array}{rr} \omega & - i \zeta \\[0.5em] \alpha \lambda_\epsilon i \zeta^\sfT & \quad \alpha \lambda_\epsilon |\zeta| + \frac{1}{\sqrt{\textrm{Re}}} \lambda_\epsilon \end{array} \right]
		\left[ \begin{array}{c} \hat{\tau}_v \\[0.5em] \hat{\tau}_w \end{array} \right]
		= \left[ \begin{array}{c} 0 \\[0.5em] \hat{h}_w \end{array} \right]
\end{equation*}
and we obtain
\begin{equation*}
	{\textstyle \frac{1}{\sqrt{\textrm{Re}}}} \lambda_\epsilon \hat{\tau}_w = \left( 1 + \sqrt{\textrm{Re}}\,\alpha |\zeta| \left( 1 - \frac{|\zeta|}{\omega} \right) \right)^{-1} \hat{h}_w.
\end{equation*}
This implies
\begin{equation*}
	\widehat{\Sigma h_w} = \frac{\sqrt{\textrm{Re}}\,\alpha |\zeta| \left( 1 - \frac{|\zeta|}{\omega} \right)}{1 + \sqrt{\textrm{Re}}\,\alpha |\zeta| \left( 1 - \frac{|\zeta|}{\omega} \right)} \hat{h}_w, \qquad \qquad
	   \widehat{\Pi h_w} = \frac{1}{1 + \sqrt{\textrm{Re}}\,\alpha |\zeta| \left( 1 - \frac{|\zeta|}{\omega} \right)} \hat{h}_w,
\end{equation*}
which are the desired representations of $\Sigma$ and $\Pi$.

\subsubsection*{Step 2.2}
In order to derive the mapping properties \eqnref{Halfspace-Mappings-Normal} based on the representations obtained above
we employ the same techniques as in Step 2.2 of \Subsecref{Linear-Halfspace-Proof-Tangential}.
By the very same arguments as used there we obtain that the symbol of $\Pi$ is bounded and holomorphic in $\Sigma_{\pi - \theta} \times \Sigma_{\theta / 2}$
for some $0 < \theta < \frac{\pi}{2}$.
This yields the desired mapping properties of $\Pi$.
Moreover, based on its symbol, $\Sigma$ has the same mapping properties as
\begin{equation*}
	\mathrm{Op}({\textstyle \frac{|\zeta|}{1 + |\zeta|}}): L_p(\bR_+,\,\dot{W}^{1 - 1/p}_p(\bR^{n - 1})) \longrightarrow L_p(\bR_+,\,W^{1 - 1/p}_p(\bR^{n - 1})),
\end{equation*}
which yields the mapping properties \eqnref{Halfspace-Mappings-Normal}.
This finishes the proof of \Thmref{Linear-Halfspace} for the boundary condition \eqnref*{Normal-NRBC}.
\begin{remark}
	\remlabel{Normal-Pressure-Effect}
	Note that we have
	\begin{equation*}
		\mathrm{Sym}(\Sigma) \rightarrow 0, \qquad \qquad \mathrm{Sym}(\Pi) \rightarrow 1 \qquad \qquad \mbox{as} \ \alpha \rightarrow 0,
	\end{equation*}
	which are the symbols of the corresponding operators for the boundary condition
	\begin{equation}
		\eqnlabel{Normal-Pressure-Effect}
		\begin{array}{rclll}
		                                             [v]_y & = & 0   & \quad \mbox{on} & J \times \bR^{n - 1}, \\[0.5em]
			- \frac{2}{\textrm{Re}} [\partial_y w]_y + [p]_y & = & h_w & \quad \mbox{on} & J \times \bR^{n - 1},
	\end{array}
\end{equation}
which is one of the energy preserving boundary conditions considered in \cite{Bothe-Koehne-Pruess:Energy-Preserving-Boundary-Conditions}.
Now, one can either employ $[\partial_y w]_y = -\nabla_x \cdot [v]_y = 0$ or the fact that $\Sigma = 0$ and $\Pi = 1$ for this limit case
to recognize that the absence of the pressure trace in \eqnref{Normal-Pressure-Effect} leads to an ill-posed problem,
since one boundary condition would be missing then.
A similar defect applies to the dynamic outflow condition \eqnref*{Normal-NRBC} without the pressure trace,
which would lead to an ill-posed problem.
\end{remark}

\subsection{The Condition FDO}
\subseclabel{Linear-Halfspace-Proof-Full}
Finally, we consider the Stokes equations subject to a fully dynamic outflow boundary condition,
i.\,e. the system \eqnref*{Interior-Linear}, \eqnref*{Fully-NRBC} which reads
\begin{equation}
	\eqnlabel{Linear-Halfspace-Full}
	\begin{array}{rclll}
		                                                            \partial_t u - \frac{1}{\textrm{Re}} \Delta u + \nabla p & = & f   & \quad \mbox{in} & J \times \bR^n_+,     \\[0.5em]
		                                                                                                       \mbox{div}\,u & = & g   & \quad \mbox{in} & J \times \bR^n_+,     \\[0.5em]
		\alpha \partial_t [v]_y - (\alpha V + \frac{1}{\textrm{Re}}) [\partial_y v]_y - \frac{1}{\textrm{Re}} \nabla_x [w]_y & = & h_v & \quad \mbox{on} & J \times \bR^{n - 1}, \\[0.5em]
		                               \alpha \partial_t [w]_y - (\alpha V + \frac{2}{\textrm{Re}}) [\partial_y w]_y + [p]_y & = & h_w & \quad \mbox{on} & J \times \bR^{n - 1}, \\[0.5em]
		                                                                                                                u(0) & = & u_0 & \quad \mbox{in} & \bR^n_+.
	\end{array}
\end{equation}
As in the previous steps we require the regularity and compatibility conditions as stated in \Thmref{Linear-Halfspace}, i.\,e.\ we assume that
\begin{itemize}
	\item $f \in L_p(J \times \bR^n_+)^n$,
	\item $g \in H^{1/2}_p(J,\,L_p(\bR^n_+)) \cap L_p(J,\,H^1_p(\bR^n_+))$,
	\item $h_v \in W^{1/2 - 1/2p}_p(J,\,L_p(\bR^{n - 1}))^{n - 1} \cap L_p(J,\,W^{1 - 1/p}_p(\bR^{n - 1}))^{n - 1}$,
	\item $h_w \in L_p(J,\,W^{1 - 1/p}_p(\bR^{n - 1}))$,
	\item $u_0 \in W^{2 - 2/p}_p(\bR^n_+)^n$ with $\textrm{\upshape div}\,u_0 = g(0)$ in $\bR^n_+$ for $p \geq 2$,
		\item $F(g,\,\eta) \in H^1_p(J,\,\hat{H}^{-1}_p(\Omega))$ for some \\
			$\eta \in H^1_p(J,\,W^{1 - 1/p}_p(\Gamma)) \cap L_p(J,\,W^{2 - 1/p}_p(\Gamma))$
			with $[w_0]_y = \eta(0)$ for $p > \frac{3}{2}$,
	\item $[v_0]_y \in W^{2 - 2/p}_p(\bR^{n - 1})^{n - 1}$.
\end{itemize}
The construction of a solution to \eqnref{Linear-Halfspace-Full} requires several Steps.

\subsection*{Step 1}
As a first step we again show that we may w.\,l.\,o.\,g.\ assume $f = 0$, $g = 0$, $h_v = 0$ and $u_0 = 0$ in the following.
Indeed, since the proof \Thmref{Linear-Halfspace} concerning the boundary condition \eqnref*{Tangential-NRBC} has already been given,
we may now solve the Stokes equations subject to a tangential dynamic outflow boundary condition
\begin{equation*}
	\begin{array}{rclll}
		                                                            \partial_t u - \frac{1}{\textrm{Re}} \Delta u + \nabla p & = & f    & \quad \mbox{in} & J \times \bR^n_+,     \\[0.5em]
		                                                                                                       \mbox{div}\,u & = & g    & \quad \mbox{in} & J \times \bR^n_+,     \\[0.5em]
		\alpha \partial_t [v]_y - (\alpha V + \frac{1}{\textrm{Re}}) [\partial_y v]_y - \frac{1}{\textrm{Re}} \nabla_x [w]_y & = & h_v  & \quad \mbox{on} & J \times \bR^{n - 1}, \\[0.5em]
		                                                                                                               [w]_y & = & \eta & \quad \mbox{on} & J \times \bR^{n - 1}, \\[0.5em]
		                                                                                                                u(0) & = & u_0  & \quad \mbox{in} & \bR^n_+.
	\end{array}
\end{equation*}
to obtain a unique solution in the desired regularity class.
This immediately leads to the desired reduction.
Note, however, that we now have to assume $h_w \in L_p(J,\,\dot{W}^{1 - 1/p}_p(\bR^{n - 1}))$
to obtain a pressure $p \in L_p(J,\,\dot{H}^1_p(\bR^n_+))$ without additional regularity for $[p]_y$.

\subsection*{Step 2}
In order to solve the remaining problem, we will again employ a Laplace transformation in time and a Fourier transformation in the tangential part of the spatial variables.
This is again only possible for an unbounded time interval, i.\,e.\ we will from now on consider the shifted problem
\begin{equation}
	\eqnlabel{Linear-Halfspace-Full-Reduced}
	\begin{array}{rclll}
		                                                                       \epsilon u + \partial_t u - \frac{1}{\textrm{Re}} \Delta u + \nabla p & = & 0   & \quad \mbox{in} & \bR_+ \times \bR^n_+,     \\[0.5em]
		                                                                                                                               \mbox{div}\,u & = & 0   & \quad \mbox{in} & \bR_+ \times \bR^n_+,     \\[0.5em]
		\alpha \epsilon [v]_y + \alpha \partial_t [v]_y - (\alpha V + \frac{1}{\textrm{Re}}) [\partial_y v]_y - \frac{1}{\textrm{Re}} \nabla_x [w]_y & = & 0   & \quad \mbox{on} & \bR_+ \times \bR^{n - 1}, \\[0.5em]
		                               \alpha \epsilon [w]_y + \alpha \partial_t [w]_y - (\alpha V + \frac{2}{\textrm{Re}}) [\partial_y w]_y + [p]_y & = & h_w & \quad \mbox{on} & \bR_+ \times \bR^{n - 1}, \\[0.5em]
		                                                                                                                                        u(0) & = & 0   & \quad \mbox{in} & \bR^n_+
	\end{array}
\end{equation}
for an arbitrary $\epsilon > 0$.
Note that maximal regularity for this problem is again equivalent to maximal regularity of the original problem (i.\,e.\ for $\epsilon = 0$) on finite time intervals $J = (0,\,a)$.
The strategy to construct a solution to \eqnref{Linear-Halfspace-Full-Reduced} is the same as for the problem \eqnref{Linear-Halfspace-Normal-Reduced}:
We compute the pressure trace $[p]_y = \Pi h_w$ as well as $h_w - [p]_y = \Sigma h_w$ and show that these operators are bounded and linear in the setting \eqnref{Halfspace-Mappings-Normal}.
Then we obtain the pressure $p \in L_p(\bR_+,\,\dot{H}^1_p(\bR^n_+))$ as a solution to the (weak) elliptic problem
\begin{equation*}
	\begin{array}{rclll}
		- \Delta p & = & 0       & \quad \mbox{in} & \bR_+ \times \bR^n_+,     \\[0.5em]
		     [p]_y & = & \Pi h_w & \quad \mbox{on} & \bR_+ \times \bR^{n - 1},
	\end{array}
\end{equation*}
cf.~\cite[Proposition~3.1]{Bothe-Koehne-Pruess:Energy-Preserving-Boundary-Conditions}.
Finally, we obtain $u$ as a maximal regular solution to the parabolic problem
\begin{equation*}
	\begin{array}{rclll}
		                                           \epsilon u + \partial_t u - \frac{1}{\textrm{Re}} \Delta u & = & - \nabla p                           & \quad \mbox{in} & \bR_+ \times \bR^n_+,     \\[0.5em]
		\alpha \epsilon [v]_y + \alpha \partial_t [v]_y - (\alpha V + \frac{1}{\textrm{Re}}) [\partial_y v]_y & = & \frac{1}{\textrm{Re}} \nabla_x [w]_y & \quad \mbox{on} & \bR_+ \times \bR^{n - 1}, \\[0.5em]
		\alpha \epsilon [w]_y + \alpha \partial_t [w]_y - (\alpha V + \frac{2}{\textrm{Re}}) [\partial_y w]_y & = & \Sigma h_w                           & \quad \mbox{on} & \bR_+ \times \bR^{n - 1}, \\[0.5em]
		                                                                                                 u(0) & = & 0                                    & \quad \mbox{in} & \bR^n_+
	\end{array}
\end{equation*}
via Propositions~\ref{prop:Parabolic-DBC-Standard}~and~\ref{prop:Parabolic-DBC-Low-Regularity}.

\subsubsection*{Step 2.1}
We compute the symbols of $\Sigma$ and $\Pi$.
The transformed equations \eqnref{Linear-Halfspace-Full-Reduced} read:
\begin{equation*}
	\begin{array}{rcllll}
		                                 \omega^2 \hat v - \frac{1}{\textrm{Re}} \partial_y^2 \hat v + i\xi \hat p & = & 0         & \quad \lambda \in \Sigma_{\pi - \theta}, & \xi \in \bR^{n - 1}, & y > 0, \\[0.5em]
		                           \omega^2 \hat w - \frac{1}{\textrm{Re}} \partial_y^2 \hat w + \partial_y \hat p & = & 0         & \quad \lambda \in \Sigma_{\pi - \theta}, & \xi \in \bR^{n - 1}, & y > 0, \\[0.5em]
		                                                                    i \xi \cdot \hat v + \partial_y \hat w & = & 0         & \quad \lambda \in \Sigma_{\pi - \theta}, & \xi \in \bR^{n - 1}, & y > 0, \\[0.5em]
		\alpha \lambda_\epsilon [\hat v]_y - \kappa [\partial_y \hat v]_y - \frac{1}{\textrm{Re}} i \xi [\hat w]_y & = & 0         & \quad \lambda \in \Sigma_{\pi - \theta}, & \xi \in \bR^{n - 1},          \\[0.5em]
		                            \alpha \lambda_\epsilon [\hat w]_y - \sigma [\partial_y \hat w]_y + [\hat p]_y & = & \hat{h}_w & \quad \lambda \in \Sigma_{\pi - \theta}, & \xi \in \bR^{n - 1},          \\[0.5em]

	\end{array}
\end{equation*}
where we used the same notations as in the previous subsection.
We again employ the ansatz \eqnref{Halfspace-Generic-Solution}
and due to \eqnref{Halfspace-Generic-Boundary} the boundary conditions read
\begin{equation*}
	\raisebox{1em}{\BigE[} \begin{array}{rr} \underbrace{\alpha \sqrt{\textrm{Re}}\,\lambda_\epsilon \omega + \textrm{Re}\,\kappa \omega^2 - (i \zeta \otimes i \zeta)}_{=: B(\lambda, |\zeta|)} & \quad - \underbrace{(\alpha \sqrt{\textrm{Re}}\,\lambda_\epsilon + \textrm{Re}\,\kappa |\zeta| + |\zeta|)}_{=: \beta_v(\lambda, |\zeta|)} i \zeta \\[3.0em] \underbrace{(\alpha \sqrt{\textrm{Re}}\,\lambda_\epsilon + \textrm{Re}\,\sigma \omega)}_{=: \beta_w(\lambda, |\zeta|)} i \zeta^\sfT & \quad \underbrace{\alpha \sqrt{\textrm{Re}}\,\lambda_\epsilon |\zeta| + \textrm{Re}\,\sigma |\zeta|^2 + \lambda_\epsilon}_{=: \beta(\lambda, |\zeta|)} \end{array} \raisebox{1em}{\BigE]}
		\raisebox{1em}{$\left[ \begin{array}{c} \hat{\tau}_v \\[0.5em] \hat{\tau}_w \end{array} \right] = \left[ \begin{array}{c} 0 \\[0.5em] \sqrt{\textrm{Re}}\,\hat{h}_w \end{array} \right]$}
\end{equation*}
and we obtain $[\hat{p}]_y = \frac{1}{\sqrt{\textrm{Re}}} \lambda_\epsilon \hat{\tau}_w = \lambda_\epsilon\,(\beta + \beta_v \beta_w i \zeta^\sfT B^{-1} i \zeta)^{-1} \hat{h}_w$.
Now, using \eqnref{Halfspace-Generic-Inverse} we have
\begin{equation*}
	\beta + \beta_v \beta_w i \zeta^\sfT B^{-1} i \zeta
		= \beta + \frac{\beta_v \beta_w}{\phi} \left( \frac{|\zeta|^4}{\phi + |\zeta|^2} - |\zeta|^2 \right)
		= \beta - \frac{\beta_v \beta_w |\zeta|^2}{\phi + |\zeta|^2}
		= \frac{\beta (\phi + |\zeta|^2) - \beta_v \beta_w |\zeta|^2}{\phi + |\zeta|^2}
\end{equation*}
and $1 + \textrm{Re}\,\kappa = \textrm{Re}\,\sigma$ together with
\begin{equation*}
	\beta_v(\lambda,\,|\zeta|) = \alpha \sqrt{\textrm{Re}}\,\lambda_\epsilon + \textrm{Re}\,\sigma |\zeta|, \qquad \qquad
	  \beta(\lambda,\,|\zeta|) = \lambda_\epsilon + \beta_v(\lambda,\,|\zeta|) |\zeta|
\end{equation*}
implies
\begin{equation*}
	(\beta + \beta_v \beta_w i \zeta^\sfT B^{-1} i \zeta)^{-1} = \frac{\phi + |\zeta|^2}{\lambda_\epsilon(\phi + |\zeta|^2) + \beta_v (\phi + |\zeta|^2) |\zeta| - \beta_v \beta_w |\zeta|^2}.
\end{equation*}
In order to obtain a suitable representation of the symbols of $\Sigma$ and $\Pi$ we first observe that
\begin{equation*}
	\phi + |\zeta|^2 = \alpha \sqrt{\textrm{Re}}\,\lambda_\epsilon \omega + \textrm{Re}\,\kappa \omega^2 + |\zeta|^2
		= \alpha \sqrt{\textrm{Re}}\,\lambda_\epsilon \omega + \textrm{Re}\,\kappa \lambda_\epsilon + \textrm{Re}\,\sigma |\zeta|^2
\end{equation*}
while
\begin{equation*}
	\begin{array}{l}
		\textrm{Re}\,\sigma \lambda_\epsilon |\zeta|^2 + \beta_v (\textrm{Re}\,\kappa \lambda_\epsilon |\zeta| + \textrm{Re}\,\sigma |\zeta|^3 - \beta_w |\zeta|^2) \\[0.5em]
			\qquad = \textrm{Re}\,\sigma \lambda_\epsilon |\zeta|^2 + \beta_v (\textrm{Re}\,\kappa \lambda_\epsilon |\zeta| + \textrm{Re}\,\sigma (|\zeta| - \omega) |\zeta|^2 - \alpha \sqrt{\textrm{Re}}\,\lambda_\epsilon |\zeta|^2) \\[0.5em]
			\qquad = \textrm{Re}\,\sigma \lambda_\epsilon |\zeta|^2 + \textrm{Re}\,\kappa\,\textrm{Re}\,\sigma \lambda_\epsilon |\zeta|^2 + \textrm{Re}\,\sigma (|\zeta| - \omega) \textrm{Re}\,\sigma |\zeta|^3 \\[0.5em]
			\qquad \qquad \qquad \qquad + \ \alpha \sqrt{\textrm{Re}}\,\lambda_\epsilon (\textrm{Re}\,\kappa \lambda_\epsilon + \textrm{Re}\,\sigma (|\zeta| - \omega) |\zeta| - \beta_v |\zeta|) |\zeta| \\[1.0em]
			\qquad = {\displaystyle \frac{\textrm{Re}\,\sigma \lambda_\epsilon \textrm{Re}\,\sigma (\omega + |\zeta|) |\zeta|^2 - \textrm{Re}\,\sigma \lambda_\epsilon \textrm{Re}\,\sigma |\zeta|^3}{\omega + |\zeta|}} \\[1.0em]
			\qquad \qquad \qquad \qquad + \ \alpha \sqrt{\textrm{Re}}\,\lambda_\epsilon (\textrm{Re}\,\kappa \lambda_\epsilon + \textrm{Re}\,\sigma (|\zeta| - \omega) |\zeta| - \beta_v |\zeta|) |\zeta| \\[1.0em]
			\qquad = \textrm{Re}\,\sigma \lambda_\epsilon {\displaystyle \frac{\omega}{\omega + |\zeta|}} \textrm{Re}\,\sigma |\zeta|^2 + \alpha \sqrt{\textrm{Re}}\,\lambda_\epsilon (\textrm{Re}\,\kappa \lambda_\epsilon + \textrm{Re}\,\sigma (|\zeta| - \omega) |\zeta| - \beta_v |\zeta|) |\zeta|
	\end{array}
\end{equation*}
and
\begin{equation*}
	\begin{array}{l}
		\beta_v \alpha \sqrt{\textrm{Re}}\,\lambda_\epsilon \omega |\zeta| + \alpha \sqrt{\textrm{Re}}\,\lambda_\epsilon (\textrm{Re}\,\kappa \lambda_\epsilon + \textrm{Re}\,\sigma (|\zeta| - \omega) |\zeta| - \beta_v |\zeta|) |\zeta| \\[0.5em]
			\qquad = \alpha \sqrt{\textrm{Re}}\,\lambda_\epsilon (\beta_v (\omega - |\zeta|) + \textrm{Re}\,\kappa \lambda_\epsilon + \textrm{Re}\,\sigma (|\zeta| - \omega) |\zeta|) |\zeta| \\[1.5em]
			\qquad = \alpha \sqrt{\textrm{Re}}\,\lambda_\epsilon \left[ {\displaystyle \frac{\beta_v \lambda_\epsilon - \textrm{Re}\,\sigma \lambda_\epsilon |\zeta|}{\omega + |\zeta|}} + \textrm{Re}\,\kappa \lambda_\epsilon \right] |\zeta| \\[1.5em]
			\qquad = \alpha \sqrt{\textrm{Re}}\,\lambda_\epsilon \left[ {\displaystyle \frac{\alpha \sqrt{\textrm{Re}}\,\lambda^2_\epsilon}{\omega + |\zeta|}} + \textrm{Re}\,\kappa \lambda_\epsilon \right] |\zeta|,
	\end{array}
\end{equation*}
which implies
\begin{equation*}
	\widehat{\Pi h_w}
		= \frac{\alpha \sqrt{\textrm{Re}}\,\lambda_\epsilon \omega + \textrm{Re}\,\kappa \lambda_\epsilon + \textrm{Re}\,\sigma |\zeta|^2}{\alpha \sqrt{\textrm{Re}}\,\lambda_\epsilon \omega + \textrm{Re}\,\kappa \lambda_\epsilon + \alpha \sqrt{\textrm{Re}}\,\lambda_\epsilon \!\! \left[ \alpha \sqrt{\textrm{Re}}\,\lambda_\epsilon + \textrm{Re}\,\kappa (\omega + |\zeta|) \right] \!\! \frac{|\zeta|}{\omega + |\zeta|} + \textrm{Re}\,\sigma {\frac{\omega}{\omega + |\zeta|}} \textrm{Re}\,\sigma |\zeta|^2} \hat{h}_w.
\end{equation*}
Based on this representation of $\Pi$ we also obtain
\begin{equation*}
	\widehat{\Sigma h_w}
		= \frac{\alpha \sqrt{\textrm{Re}}\,\lambda_\epsilon \!\! \left[ \alpha \sqrt{\textrm{Re}}\,\lambda_\epsilon + \textrm{Re}\,\kappa (\omega + |\zeta|) \right] \!\! \frac{|\zeta|}{\omega + |\zeta|} + (\textrm{Re}\,\kappa \omega - |\zeta|) \textrm{Re}\,\sigma |\zeta| \frac{|\zeta|}{\omega + |\zeta|}}{\alpha \sqrt{\textrm{Re}}\,\lambda_\epsilon \omega + \textrm{Re}\,\kappa \lambda_\epsilon + \alpha \sqrt{\textrm{Re}}\,\lambda_\epsilon \!\! \left[ \alpha \sqrt{\textrm{Re}}\,\lambda_\epsilon + \textrm{Re}\,\kappa (\omega + |\zeta|) \right] \!\! \frac{|\zeta|}{\omega + |\zeta|} + \textrm{Re}\,\sigma {\frac{\omega}{\omega + |\zeta|}} \textrm{Re}\,\sigma |\zeta|^2} \hat{h}_w.
\end{equation*}
These are the desired representations of $\Sigma$ and $\Pi$.

\subsubsection*{Step 2.2}
In order to derive the mapping properties \eqnref{Halfspace-Mappings-Normal} based on the representations obtained above
we employ the same techniques as in Step 2.2 of \Subsecref{Linear-Halfspace-Proof-Tangential}.
By the very same arguments as used there we obtain that the symbol of $\Pi$ is bounded and holomorphic in $\Sigma_{\pi - \theta} \times \Sigma_{\theta / 2}$
for some $0 < \theta < \frac{\pi}{2}$.
This yields the desired mapping properties of $\Pi$.
Moreover, based on its symbol, $\Sigma$ has the same mapping properties as
\begin{equation*}
	\mathrm{Op}({\textstyle \frac{|\zeta|}{\omega + |\zeta|}}): L_p(\bR_+,\,\dot{W}^{1 - 1/p}_p(\bR^{n - 1})) \longrightarrow L_p(\bR_+,\,W^{1 - 1/p}_p(\bR^{n - 1})),
\end{equation*}
which yields the mapping properties \eqnref{Halfspace-Mappings-Normal}.
This finishes the proof of \Thmref{Linear-Halfspace} for the boundary condition \eqnref*{Fully-NRBC}.

\section{The Wedge Case}
\seclabel{Wedge}
This section is devoted to the proofs of Theorems~\ref{thm:Linear-Wedge-Inflow}, and \ref{thm:Linear-Wedge-Non-Reflecting}.
Here, we first note that we can always assume $\sigma = 0$, since the corresponding term is of lower order and may be added using a standard perturbation argument.
Moreover, we assume $a > 0$, set $J := (0,\,a)$, and assume $1 < p < \infty$ with $p \neq \frac{3}{2},\,3$.
Furthermore, we assume $\alpha,\,\mbox{Re} > 0$.
Since we solve the Stokes equations in the wedge $\Omega := \bR^n_{+\!\!\!+}$,
it is convenient to denote the velocity field as $(u,\,v,\,w): J \times \Omega \longrightarrow \bR^n$,
i.\,e.\ we employ a decomposition into a {\itshape purely tangential part} $u: J \times \Omega \longrightarrow \bR^{n - 2}$,
and two {\itshape normal parts} $v,\,w: J \times \Omega \longrightarrow \bR$.
The spatial coordinates are denoted by $(x,\,y,\,z) \in \bR^n_{+\!\!\!+}$ with $x \in \bR^{n - 2}$, and $y,\,z > 0$.
Finally, $\cE = \overline{\partial_y \bR^n_{+\!\!\!+}} \cap \overline{\partial_z \bR^n_{+\!\!\!+}}$ and
$[\,\cdot\,]_y$, and $[\,\cdot\,]_z$ denote the trace of a quantity defined in $\bR^n_{+\!\!\!+}$
on the boundaries $\partial_y \bR^n_{+\!\!\!+}$ and $\partial_z \bR^n_{+\!\!\!+}$, respectively.

\subsection{Combination of Inflow/Navier Conditions}
In order to prove \Thmref{Linear-Wedge-Inflow} we have to study the model problem
\begin{equation}
	\eqnlabel{Wedge-Problem}
	\begin{array}{rclll}
		                     \partial_t u - \frac{1}{\textrm{Re}} \Delta u +   \nabla_x p & = & f_u                          & \quad \mbox{in} & J \times \bR^n_{+\!\!\!+},            \\[0.5em]
		                     \partial_t v - \frac{1}{\textrm{Re}} \Delta v + \partial_y p & = & f_v                          & \quad \mbox{in} & J \times \bR^n_{+\!\!\!+},            \\[0.5em]
		                     \partial_t w - \frac{1}{\textrm{Re}} \Delta w + \partial_z p & = & f_w                          & \quad \mbox{in} & J \times \bR^n_{+\!\!\!+},            \\[0.5em]
		                                    \mbox{div}_x\,u + \partial_y v + \partial_z w & = & g                            & \quad \mbox{in} & J \times \bR^n_{+\!\!\!+},            \\[0.5em]
		  - \frac{1}{\textrm{Re}} [\partial_y u]_y - \frac{1}{\textrm{Re}} \nabla_x [v]_y & = & h^{\textrm{\upshape wall}}_u & \quad \mbox{on} & J \times \partial_y \bR^n_{+\!\!\!+}, \\[0.5em]
		                                                                            [v]_y & = & h^{\textrm{\upshape wall}}_v & \quad \mbox{on} & J \times \partial_y \bR^n_{+\!\!\!+}, \\[0.5em]
		- \frac{1}{\textrm{Re}} [\partial_y w]_y - \frac{1}{\textrm{Re}} \partial_z [v]_y & = & h^{\textrm{\upshape wall}}_w & \quad \mbox{on} & J \times \partial_y \bR^n_{+\!\!\!+}, \\[0.5em]
		                                                                            [u]_z & = & u^{\textrm{\upshape in}}_u   & \quad \mbox{on} & J \times \partial_z \bR^n_{+\!\!\!+}, \\[0.5em]
		                                                                            [v]_z & = & u^{\textrm{\upshape in}}_v   & \quad \mbox{on} & J \times \partial_z \bR^n_{+\!\!\!+}, \\[0.5em]
		                                                                            [w]_z & = & u^{\textrm{\upshape in}}_w   & \quad \mbox{on} & J \times \partial_z \bR^n_{+\!\!\!+}, \\[0.5em]
		                                         u(0) = u_0, \quad v(0) = v_0, \quad w(0) & = & w_0                          & \quad \mbox{in} & \bR^n_{+\!\!\!+},
	\end{array}
\end{equation}
where the data $f = (f_u,\,f_v,\,f_w)$, $g$,
$u^{\textrm{\upshape in}} = (u^{\textrm{\upshape in}}_u,\,u^{\textrm{\upshape in}}_v,\,u^{\textrm{\upshape in}}_w)$,
$h^{\textrm{\upshape wall}} = (h^{\textrm{\upshape wall}}_u,\,h^{\textrm{\upshape wall}}_v,\,h^{\textrm{\upshape wall}}_w)$, and the initial data
$(u_0,\,v_0,\,w_0)$ are subject to the regularity/compatibility conditions stated in \Thmref{Linear-Wedge-Inflow}, i.\,e.
\begin{itemize}
	\item $f \in L_p(J \times \bR^n_{+\!\!\!+})^n$,
	\item $g \in H^{1/2}_p(J,\,L_p(\bR^n_{+\!\!\!+})) \cap L_p(J,\,H^1_p(\bR^n_{+\!\!\!+}))$,
	\item $u^{\textrm{\upshape in}} \in W^{1 - 1/2p}_p(J,\,L_p(\partial_z \bR^n_{+\!\!\!+}))^n \cap L_p(J,\,W^{2 - 1/p}_p(\partial_z \bR^n_{+\!\!\!+}))^n$,
	\item $(h^{\textrm{\upshape wall}}_u,\,h^{\textrm{\upshape wall}}_w) \in W^{1/2 - 1/2p}_p(J,\,L_p(\partial_y \bR^n_{+\!\!\!+}))^{n - 1} \cap L_p(J,\,W^{1 - 1/p}_p(\partial_y \bR^n_{+\!\!\!+}))^{n - 1}$,
	\item $h^{\textrm{\upshape wall}}_v \in W^{1 - 1/2p}_p(J,\,L_p(\partial_y \bR^n_{+\!\!\!+})) \cap L_p(J,\,W^{2 - 1/p}_p(\partial_y \bR^n_{+\!\!\!+}))$,
	\item $F(g,\,- h^{\textrm{\upshape wall}}_v,\,- u^{\textrm{\upshape in}}_w) \in H^1_p(J,\,\hat{H}^{-1}_p(\bR^n_{+\!\!\!+}))$,
	\item $(u_0,\,v_0,\,w_0) \in W^{2 - 2/p}_p(J \times \bR^n_{+\!\!\!+})^n$
\end{itemize}
with
\begin{subequations}
\begin{equation}
	\eqnlabel{Wedge-Compatibility-1}
	\mbox{div}_x\,u_0 + \partial_y v_0 + \partial_z w_0 = g(0) \quad \mbox{in} \ \bR^n_{+\!\!\!+}, \quad \mbox{if} \ p \geq 2
\end{equation}
as well as
\begin{equation}
	\eqnlabel{Wedge-Compatibility-2a}
	\begin{array}{rclllll}
		  - \frac{1}{\textrm{Re}} [\partial_y u_0]_y - \frac{1}{\textrm{Re}} \nabla_x [v_0]_y & = & h^{\textrm{\upshape wall}}_u(0) & \quad \mbox{on} & \partial_y \bR^n_{+\!\!\!+}, & \quad \mbox{if} & p > 3,           \\[0.5em]
		                                                                              [v_0]_y & = & h^{\textrm{\upshape wall}}_v(0) & \quad \mbox{on} & \partial_y \bR^n_{+\!\!\!+}, & \quad \mbox{if} & p > \frac{3}{2}, \\[0.5em]
		- \frac{1}{\textrm{Re}} [\partial_y w_0]_y - \frac{1}{\textrm{Re}} \partial_z [v_0]_y & = & h^{\textrm{\upshape wall}}_w(0) & \quad \mbox{on} & \partial_y \bR^n_{+\!\!\!+}, & \quad \mbox{if} & p > 3,           \\[0.5em]
	\end{array}
\end{equation}
together with
\begin{equation}
	\eqnlabel{Wedge-Compatibility-2b}
	\begin{array}{rclllll}
		[u_0]_z & = & u^{\textrm{\upshape in}}_u(0)   & \quad \mbox{on} & \partial_z \bR^n_{+\!\!\!+}, & \quad \mbox{if} & p > \frac{3}{2}, \\[0.5em]
		[v_0]_z & = & u^{\textrm{\upshape in}}_v(0)   & \quad \mbox{on} & \partial_z \bR^n_{+\!\!\!+}, & \quad \mbox{if} & p > \frac{3}{2}, \\[0.5em]
		[w_0]_z & = & u^{\textrm{\upshape in}}_w(0)   & \quad \mbox{on} & \partial_z \bR^n_{+\!\!\!+}, & \quad \mbox{if} & p > \frac{3}{2},
	\end{array}
\end{equation}
and, due to \eqnref*{Inflow-Wall-Compatibility}, with
\begin{equation}
	\eqnlabel{Wedge-Compatibility-3}
	\begin{array}{rclll}
		  - \frac{1}{\textrm{Re}} [\partial_y u^{\textrm{\upshape in}}_u]_y - \frac{1}{\textrm{Re}} \nabla_x [h^{\textrm{\upshape wall}}_v]_z & = & [h^{\textrm{\upshape wall}}_u]_z & \quad \mbox{on} & J \times \cE, \\[0.5em]
		                                                                                                       [u^{\textrm{\upshape in}}_v]_y & = & [h^{\textrm{\upshape wall}}_v]_z & \quad \mbox{on} & J \times \cE, \\[0.5em]
		- \frac{1}{\textrm{Re}} [\partial_y u^{\textrm{\upshape in}}_w]_y - \frac{1}{\textrm{Re}} [\partial_z h^{\textrm{\upshape wall}}_v]_z & = & [h^{\textrm{\upshape wall}}_w]_z & \quad \mbox{on} & J \times \cE
	\end{array}
\end{equation}
\end{subequations}
for $p \geq 2$.
The construction of a solution to \eqnref{Wedge-Problem} requires several steps.

\subsection*{Step 1}
We first show that we can w.\,l.\,o.\,g.\ assume $f = 0$, as well as $g(0) = 0$, if $p \geq 2$, and
$h^{\textrm{\upshape wall}}_v(0) = 0$, if $p > \frac{3}{2}$, and $h^{\textrm{\upshape wall}}_u(0) = h^{\textrm{\upshape wall}}_w(0) = 0$, if $p > 3$,
as well as $u^{\textrm{\upshape in}}(0) = 0$, if $p > \frac{3}{2}$,
together with $u_0 = v_0 = w_0 = 0$.
Indeed, we may choose
\begin{equation*}
	\hat{f} \in L_p(J \times \bR^n), \quad \hat{u}_0 \in W^{2 - 2/p}_p(\bR^n)^{n - 2}, \quad \hat{v}_0,\,\hat{w}_0 \in W^{2 - 2/p}_p(\bR^n)
\end{equation*}
as extentions of $f$, $u_0$, $v_0$, and $w_0$, respectively.
Note that such extensions may be constructed using a linear extension operator as provided e.\,g.\ by \cite[Theorem~4.32]{Adams-Fournier:Sobolev-Spaces}.
Then the problems
\begin{equation*}
	\begin{array}{rclll}
		             \partial_t \hat{u} - \frac{1}{\textrm{Re}} \Delta \hat{u} & = & \hat{f}_u & \quad \mbox{in} & J \times \bR^n, \\[0.5em]
		             \partial_t \hat{v} - \frac{1}{\textrm{Re}} \Delta \hat{v} & = & \hat{f}_v & \quad \mbox{in} & J \times \bR^n, \\[0.5em]
		             \partial_t \hat{w} - \frac{1}{\textrm{Re}} \Delta \hat{w} & = & \hat{f}_w & \quad \mbox{in} & J \times \bR^n, \\[0.5em]
		\hat{u}(0) = \hat{u}_0, \quad \hat{v}(0) = \hat{v}_0, \quad \hat{w}(0) & = & \hat{w}_0 & \quad \mbox{in} & \bR^n
	\end{array}
\end{equation*}
admit unique solutions
\begin{equation*}
	\begin{array}{rclcl}
		          \hat{u} & \in & H^1_p(J,\,\bR^n))^{n - 2} & \cap & L_p(J,\,H^2_p(\bR^n))^{n - 2}, \\[0.5em]
		\hat{v},\,\hat{w} & \in & H^1_p(J,\,\bR^n))         & \cap & L_p(J,\,H^2_p(\bR^n)).
	\end{array}
\end{equation*}
Now, if we define $u$, $v$, and $w$ to be the restrictions of $\hat{u}$, $\hat{v}$, and $\hat{w}$ to $\bR^n_{+\!\!\!+}$,
then $(u,\,v,\,w)$ together with $p := 0$ belong to the desired regularity class and solve
\begin{equation*}
	\begin{array}{rclll}
		\partial_t u - \frac{1}{\textrm{Re}} \Delta u +   \nabla_x p & = & f_u & \quad \mbox{in} & J \times \bR^n_{+\!\!\!+}, \\[0.5em]
		\partial_t v - \frac{1}{\textrm{Re}} \Delta v + \partial_y p & = & f_v & \quad \mbox{in} & J \times \bR^n_{+\!\!\!+}, \\[0.5em]
		\partial_t w - \frac{1}{\textrm{Re}} \Delta w + \partial_z p & = & f_w & \quad \mbox{in} & J \times \bR^n_{+\!\!\!+}, \\[0.5em]
		                    u(0) = u_0, \quad v(0) = v_0, \quad w(0) & = & w_0 & \quad \mbox{in} & \bR^n_{+\!\!\!+}.
	\end{array}
\end{equation*}
Hence, we may assume $f = u_0 = v_0 = w_0 = 0$ together with the assumptions on $g(0)$, $h^{\textrm{\upshape wall}}(0)$, and $u^{\textrm{\upshape in}}(0)$ stated above.
Note that this reduction of the problem does not affect the regularity and compatibility assumptions of \Thmref{Linear-Wedge-Inflow}, i.\,e.\ we may still assume
\begin{itemize}
	\item $g \in H^{1/2}_p(J,\,L_p(\bR^n_{+\!\!\!+})) \cap L_p(J,\,H^1_p(\bR^n_{+\!\!\!+}))$,
	\item $u^{\textrm{\upshape in}} \in W^{1 - 1/2p}_p(J,\,L_p(\partial_z \bR^n_{+\!\!\!+}))^n \cap L_p(J,\,W^{2 - 1/p}_p(\partial_z \bR^n_{+\!\!\!+}))^n$,
	\item $(h^{\textrm{\upshape wall}}_u,\,h^{\textrm{\upshape wall}}_w) \in W^{1/2 - 1/2p}_p(J,\,L_p(\partial_y \bR^n_{+\!\!\!+}))^{n - 1} \cap L_p(J,\,W^{1 - 1/p}_p(\partial_y \bR^n_{+\!\!\!+}))^{n - 1}$,
	\item $h^{\textrm{\upshape wall}}_v \in W^{1 - 1/2p}_p(J,\,L_p(\partial_y \bR^n_{+\!\!\!+})) \cap L_p(J,\,W^{2 - 1/p}_p(\partial_y \bR^n_{+\!\!\!+}))$,
	\item $F(g,\,- h^{\textrm{\upshape wall}}_v,\,- u^{\textrm{\upshape in}}_w) \in H^1_p(J,\,\hat{H}^{-1}_p(\bR^n_{+\!\!\!+}))$
\end{itemize}
as well as compatibility condition \eqnref{Wedge-Compatibility-3} in the remaining part of the proof.

\subsection*{Step 2}
We show that we may w.\,l.\,o.\,g.\ assume $g \in H^1_p(J,\,{}_0 \dot{H}^{-1}_p(\bR^n_{+\!\!\!+}))$,
where we employ the notation ${}_0 \dot{H}^{-1}_p(\bR^n_{+\!\!\!+}) := \dot{H}^1_{p^\prime}(\bR^n_{+\!\!\!+})^\prime$ for $\frac{1}{p} + \frac{1}{p^\prime} = 1$,
as well as $h^{\textrm{\upshape wall}}_u = h^{\textrm{\upshape wall}}_v = u^{\textrm{\upshape in}}_w = 0$.
Indeed, we may choose
\begin{equation*}
	\begin{array}{rclcl}
		\hat{h}^{\textrm{\upshape wall}}_u & \in &                   W^{1/2 - 1/2p}_p(J,\,L_p(\partial \bR^n_{[y > 0]}))^{n - 2} & \cap & L_p(J,\,W^{1 - 1/p}_p(\partial \bR^n_{[y > 0]}))^{n - 2}, \\[0.5em]
		\hat{h}^{\textrm{\upshape wall}}_v & \in & {}^{\phantom{/2}} W^{1 - 1/2p}_p(J,\,L_p(\partial \bR^n_{[y > 0]}))           & \cap & L_p(J,\,W^{2 - 1/p}_p(\partial \bR^n_{[y > 0]})),         \\[0.5em]
		  \hat{u}^{\textrm{\upshape in}}_w & \in & {}^{\phantom{/2}} W^{1 - 1/2p}_p(J,\,L_p(\partial \bR^n_{[z > 0]}))           & \cap & L_p(J,\,W^{2 - 1/p}_p(\partial \bR^n_{[z > 0]}))
	\end{array}
\end{equation*}
as extensions of $h^{\textrm{\upshape wall}}_u$, $h^{\textrm{\upshape wall}}_v$, and $u^{\textrm{\upshape in}}_w$, respectively,
where we denote by
\begin{equation*}
	\bR^n_{[y > 0]} := \Big\{\,(x,\,y,\,z) \in \bR^n\,:\,x \in \bR^{n - 2},\ y > 0,\ z \in \bR\,\Big\},
\end{equation*}
and $\bR^n_{[z > 0]}$, which is defined analogously, the two halfspaces,
whose intersection is given by $\bR^n_{+\!\!\!+}$.
Note that such extensions may be constructed using a linear extension operator as provided e.\,g.\ by \cite[Theorem~4.26]{Adams-Fournier:Sobolev-Spaces}.
Then the problems
\begin{equation*}
	\begin{array}{rclll}
		\partial_t \hat{u} - \frac{1}{\textrm{Re}} \Delta \hat{u} & = & 0                                                                                                      & \quad \mbox{in} & J \times \bR^n_{[y > 0]},          \\[0.5em]
		\partial_t \hat{v} - \frac{1}{\textrm{Re}} \Delta \hat{v} & = & 0                                                                                                      & \quad \mbox{in} & J \times \bR^n_{[y > 0]},          \\[0.5em]
		\partial_t \hat{w} - \frac{1}{\textrm{Re}} \Delta \hat{w} & = & 0                                                                                                      & \quad \mbox{in} & J \times \bR^n_{[z > 0]},          \\[0.5em]
		           - \frac{1}{\textrm{Re}} [\partial_y \hat{u}]_y & = & \hat{h}^{\textrm{\upshape wall}}_u + \frac{1}{\textrm{Re}} \nabla_x \hat{h}^{\textrm{\upshape wall}}_v & \quad \mbox{on} & J \times \partial \bR^n_{[y > 0]}, \\[0.5em]
		                                              [\hat{v}]_y & = & \hat{h}^{\textrm{\upshape wall}}_v                                                                     & \quad \mbox{on} & J \times \partial \bR^n_{[y > 0]}, \\[0.5em]
		                                              [\hat{w}]_z & = & \hat{u}^{\textrm{\upshape in}}_w                                                                       & \quad \mbox{on} & J \times \partial \bR^n_{[z > 0]}, \\[0.5em]
		                         \hat{u}(0) = 0, \quad \hat{v}(0) & = & 0                                                                                                      & \quad \mbox{in} & \bR^n_{[y > 0]},                   \\[0.5em]
		                                               \hat{w}(0) & = & 0                                                                                                      & \quad \mbox{in} & \bR^n_{[z > 0]}
	\end{array}
\end{equation*}
admit unique solutions
\begin{equation*}
	\begin{array}{rclcl}
		\hat{u} & \in & H^1_p(J,\,L_p(\bR^n_{[y > 0]}))^{n - 2} & \cap & L_p(J,\,H^2_p(\bR^n_{[y > 0]}))^{n - 2}, \\[0.5em]
		\hat{v} & \in & H^1_p(J,\,L_p(\bR^n_{[y > 0]}))         & \cap & L_p(J,\,H^2_p(\bR^n_{[y > 0]})),         \\[0.5em]
		\hat{w} & \in & H^1_p(J,\,L_p(\bR^n_{[z > 0]}))         & \cap & L_p(J,\,H^2_p(\bR^n_{[z > 0]})).
	\end{array}	
\end{equation*}
Now, if we define $u$, $v$, and $w$ to be the restrictions of $\hat{u}$, $\hat{v}$, and $\hat{w}$ to $\bR^n_{+\!\!\!+}$,
then $(u,\,v,\,w)$ together with $p := 0$ belong to the desired regularity class and solve
\begin{equation*}
	\begin{array}{rclll}
		                   \partial_t u - \frac{1}{\textrm{Re}} \Delta u +   \nabla_x p & = & 0                            & \quad \mbox{in} & J \times \bR^n_{+\!\!\!+},            \\[0.5em]
		                   \partial_t v - \frac{1}{\textrm{Re}} \Delta v + \partial_y p & = & 0                            & \quad \mbox{in} & J \times \bR^n_{+\!\!\!+},            \\[0.5em]
		                   \partial_t w - \frac{1}{\textrm{Re}} \Delta w + \partial_z p & = & 0                            & \quad \mbox{in} & J \times \bR^n_{+\!\!\!+},            \\[0.5em]
		- \frac{1}{\textrm{Re}} [\partial_y u]_y - \frac{1}{\textrm{Re}} \nabla_x [v]_y & = & h^{\textrm{\upshape wall}}_u & \quad \mbox{on} & J \times \partial_y \bR^n_{+\!\!\!+}, \\[0.5em]
		                                                                          [v]_y & = & h^{\textrm{\upshape wall}}_v & \quad \mbox{on} & J \times \partial_y \bR^n_{+\!\!\!+}, \\[0.5em]
		                                                                          [w]_z & = & u^{\textrm{\upshape in}}_w   & \quad \mbox{on} & J \times \partial_z \bR^n_{+\!\!\!+}, \\[0.5em]
		                                       u(0) = u_0, \quad v(0) = v_0, \quad w(0) & = & 0                            & \quad \mbox{in} & \bR^n_{+\!\!\!+}.
	\end{array}
\end{equation*}
Hence, we may assume $h^{\textrm{\upshape wall}}_u = h^{\textrm{\upshape wall}}_v = u^{\textrm{\upshape in}}_w = 0$ together with the assumption on $g$ stated above.
Note that this reduction of the problem neither affects the regularity and compatibility assumptions of \Thmref{Linear-Wedge-Inflow},
nor the simplifications obtained in Step~1, i.\,e.\ we may still assume
\begin{itemize}
	\item $g \in H^{1/2}_p(J,\,L_p(\bR^n_{+\!\!\!+})) \cap L_p(J,\,H^1_p(\bR^n_{+\!\!\!+}))$,
	\item $(u^{\textrm{\upshape in}}_u,\,u^{\textrm{\upshape in}}_v) \in W^{1 - 1/2p}_p(J,\,L_p(\partial_z \bR^n_{+\!\!\!+}))^{n - 1} \cap L_p(J,\,W^{2 - 1/p}_p(\partial_z \bR^n_{+\!\!\!+}))^{n - 1}$,
	\item $h^{\textrm{\upshape wall}}_w \in W^{1/2 - 1/2p}_p(J,\,L_p(\partial_y \bR^n_{+\!\!\!+})) \cap L_p(J,\,W^{1 - 1/p}_p(\partial_y \bR^n_{+\!\!\!+}))$
\end{itemize}
as well as the compatibility condition
\begin{equation}
	\eqnlabel{Wedge-Compatibility-4}
	\begin{array}{rclll}
		- \frac{1}{\textrm{Re}} [\partial_y u^{\textrm{\upshape in}}_u]_y & = & 0 & \quad \mbox{on} & J \times \cE, \\[0.5em]
		                                   [u^{\textrm{\upshape in}}_v]_y & = & 0 & \quad \mbox{on} & J \times \cE, \\[0.5em]
		                                 [h^{\textrm{\upshape wall}}_w]_z & = & 0 & \quad \mbox{on} & J \times \cE,
	\end{array}
\end{equation}
which stems from \eqnref{Wedge-Compatibility-3},
in the remaining part of the proof.

\subsection*{Step 3}
We show that we may w.\,l.\,o.\,g.\ assume $h^{\textrm{\upshape wall}}_w = 0$.
To accomplish this we define
\begin{equation*}
	\hat{h}^{\textrm{\upshape wall}}_w := E^-_z h^{\textrm{\upshape wall}}_w \in W^{1/2 - 1/2p}_p(J,\,L_p(\partial \bR^n_{[y > 0]})) \cap L_p(J,\,W^{1 - 1/p}_p(\partial \bR^n_{[y > 0]})),
\end{equation*}
where we denote by $E^-_z$ the odd extension operator w.\,r.\,t.\ $z$.
Note that $[h^{\textrm{\upshape wall}}_w]_z = 0$ thanks to the compatibility condition \eqnref{Wedge-Compatibility-4}.
This ensures that the odd extension of $h^{\textrm{\upshape wall}}_w$ w.\,r.\,t.\ $z$ has the desired spatial regularity.
Now, the problem
\begin{equation*}
	\begin{array}{rclll}
		\partial_t \hat{w} - \frac{1}{\textrm{Re}} \Delta \hat{w} & = & 0                                  & \quad \mbox{in} & J \times \bR^n_{[y > 0]},          \\[0.5em]
		           - \frac{1}{\textrm{Re}} [\partial_y \hat{w}]_y & = & \hat{h}^{\textrm{\upshape wall}}_w & \quad \mbox{on} & J \times \partial \bR^n_{[y > 0]}, \\[0.5em]
		                                               \hat{w}(0) & = & 0                                  & \quad \mbox{in} & \bR^n_{[y > 0]}
	\end{array}
\end{equation*}
admits a unique solution
\begin{equation*}
	\hat{w} \in H^1_p(J,\,L_p(\bR^n_{[y > 0]})) \cap L_p(J,\,H^2_p(\bR^n_{[y > 0]})),
\end{equation*}
which is odd w.\,r.\,t.\ $z$ by construction.
Hence, if we set $u = v = 0$ and define $w$ to be the restriction of $\hat{w}$ to $\bR^n_{+\!\!\!+}$,
then $(u,\,v,\,w)$ together with $p := 0$ belong to the desired regularity class and solve
\begin{equation*}
	\begin{array}{rclll}
		                     \partial_t u - \frac{1}{\textrm{Re}} \Delta u +   \nabla_x p & = & 0                            & \quad \mbox{in} & J \times \bR^n_{+\!\!\!+},            \\[0.5em]
		                     \partial_t v - \frac{1}{\textrm{Re}} \Delta v + \partial_y p & = & 0                            & \quad \mbox{in} & J \times \bR^n_{+\!\!\!+},            \\[0.5em]
		                     \partial_t w - \frac{1}{\textrm{Re}} \Delta w + \partial_z p & = & 0                            & \quad \mbox{in} & J \times \bR^n_{+\!\!\!+},            \\[0.5em]
		  - \frac{1}{\textrm{Re}} [\partial_y u]_y - \frac{1}{\textrm{Re}} \nabla_x [v]_y & = & 0                            & \quad \mbox{on} & J \times \partial_y \bR^n_{+\!\!\!+}, \\[0.5em]
		                                                                            [v]_y & = & 0                            & \quad \mbox{on} & J \times \partial_y \bR^n_{+\!\!\!+}, \\[0.5em]
		- \frac{1}{\textrm{Re}} [\partial_y w]_y - \frac{1}{\textrm{Re}} \partial_z [v]_y & = & h^{\textrm{\upshape wall}}_w & \quad \mbox{on} & J \times \partial_y \bR^n_{+\!\!\!+}, \\[0.5em]
		                                                                            [w]_z & = & 0                            & \quad \mbox{on} & J \times \partial_z \bR^n_{+\!\!\!+}, \\[0.5em]
		                                         u(0) = u_0, \quad v(0) = v_0, \quad w(0) & = & 0                            & \quad \mbox{in} & \bR^n_{+\!\!\!+}.
	\end{array}
\end{equation*}
Hence, we may assume $h^{\textrm{\upshape wall}}_w = 0$.
Note that this reduction of the problem neither affects the regularity and compatibility assumptions of \Thmref{Linear-Wedge-Inflow},
nor the simplifications obtained in Steps~1, and 2, i.\,e.\ we may still assume
\begin{itemize}
	\item $g \in H^1_p(J,\,{}_0 \dot{H}^{-1}_p(\bR^n_{+\!\!\!+})) \cap H^{1/2}_p(J,\,L_p(\bR^n_{+\!\!\!+})) \cap L_p(J,\,H^1_p(\bR^n_{+\!\!\!+}))$,
	\item $(u^{\textrm{\upshape in}}_u,\,u^{\textrm{\upshape in}}_v) \in W^{1 - 1/2p}_p(J,\,L_p(\partial_z \bR^n_{+\!\!\!+}))^{n - 1} \cap L_p(J,\,W^{2 - 1/p}_p(\partial_z \bR^n_{+\!\!\!+}))^{n - 1}$
\end{itemize}
as well as the compatibility condition
\begin{equation}
	\eqnlabel{Wedge-Compatibility-5}
	\begin{array}{rclll}
		- \frac{1}{\textrm{Re}} [\partial_y u^{\textrm{\upshape in}}_u]_y & = & 0 & \quad \mbox{on} & J \times \cE, \\[0.5em]
		                                   [u^{\textrm{\upshape in}}_v]_y & = & 0 & \quad \mbox{on} & J \times \cE,
	\end{array}
\end{equation}
which stems from \eqnref{Wedge-Compatibility-4},
in the remaining part of the proof.

\subsection*{Step 4}
Finally, we solve the reduced problem as obtained by Steps~1, 2, and 3.
To accomplish this we define
\begin{equation*}
	\hat{g} := E^+_y g \in H^1_p(J,\,{}_0 \dot{H}^{-1}_p(\bR^n_{[z > 0]})) \cap H^{1/2}_p(J,\,L_p(\bR^n_{[z > 0]})) \cap L_p(J,\,H^1_p(\bR^n_{[z > 0]}))
\end{equation*}
as well as
\begin{equation*}
	\begin{array}{rcrclcl}
		\hat{u}^{\textrm{\upshape in}}_u & := & E^+_y u^{\textrm{\upshape in}}_u & \in & W^{1 - 1/2p}_p(J,\,L_p(\partial \bR^n_{[z > 0]}))^{n - 2} & \cap & L_p(J,\,W^{2 - 1/p}_p(\partial \bR^n_{[z > 0]}))^{n - 2}, \\[0.5em]
		\hat{u}^{\textrm{\upshape in}}_v & := & E^-_y u^{\textrm{\upshape in}}_v & \in & W^{1 - 1/2p}_p(J,\,L_p(\partial \bR^n_{[z > 0]}))         & \cap & L_p(J,\,W^{2 - 1/p}_p(\partial \bR^n_{[z > 0]})),
	\end{array}
\end{equation*}
where we denote by $E^\pm_y$ the even, and odd extension operator w.\,r.\,t.\ $y$, respectively.
Note that
\begin{equation*}
	\langle \phi,\,F(\hat{g},\,0) \rangle = \!\!\! \int_{\bR^n_{[z > 0]}} \!\!\!\! \phi\,\hat{g}\,\mbox{d}V = \int_{\bR^n_{+\!\!\!+}} (1 + R^+_y) \phi\,g\,\mbox{d}V = \langle [(1 + R^+_y) \phi]|_{\bR^n_{+\!\!\!+}},\,F(g,\,0,\,0) \rangle
\end{equation*}
for $\phi \in H^1_{p^\prime}(\bR^n_{[z > 0]})$,
where $R^+_y$ denotes the even reflection operator w.\,r.\,t.\ $y$.
This implies that $\hat{g}$ has the desired temporal and spatial regularity.
Also note that $[\partial_y u^{\textrm{\upshape in}}_u]_y = [u^{\textrm{\upshape in}}_v]_y = 0$ thanks to the compatibility condition \eqnref{Wedge-Compatibility-5}.
This ensures that the even extension of $u^{\textrm{\upshape in}}_u$,
and the odd extension of $u^{\textrm{\upshape in}}_v$ w.\,r.\,t.\ $y$ have the desired spatial regularity.
Now, the Stokes equations in the halfspace
\begin{equation*}
	\begin{array}{rclll}
		\partial_t \hat{u} - \frac{1}{\textrm{Re}} \Delta \hat{u} +   \nabla_x \hat{p} & = & 0                                & \quad \mbox{in} & J \times \bR^n_{[z > 0]},            \\[0.5em]
		\partial_t \hat{v} - \frac{1}{\textrm{Re}} \Delta \hat{v} + \partial_y \hat{p} & = & 0                                & \quad \mbox{in} & J \times \bR^n_{[z > 0]},            \\[0.5em]
		\partial_t \hat{w} - \frac{1}{\textrm{Re}} \Delta \hat{w} + \partial_z \hat{p} & = & 0                                & \quad \mbox{in} & J \times \bR^n_{[z > 0]},            \\[0.5em]
		               \mbox{div}_x\,\hat{u} + \partial_y \hat{v} + \partial_z \hat{w} & = & \hat{g}                          & \quad \mbox{in} & J \times \bR^n_{[z > 0]},            \\[0.5em]
		                                                                   [\hat{u}]_z & = & \hat{u}^{\textrm{\upshape in}}_u & \quad \mbox{on} & J \times \partial_z \bR^n_{[z > 0]}, \\[0.5em]
		                                                                   [\hat{v}]_z & = & \hat{u}^{\textrm{\upshape in}}_v & \quad \mbox{on} & J \times \partial_z \bR^n_{[z > 0]}, \\[0.5em]
		                                                                   [\hat{w}]_z & = & 0                                & \quad \mbox{on} & J \times \partial_z \bR^n_{[z > 0]}, \\[0.5em]
		                        \hat{u}(0) = 0, \quad \hat{v}(0) = 0, \quad \hat{w}(0) & = & 0                                & \quad \mbox{in} & \bR^n_{[z > 0]},
	\end{array}
\end{equation*}
admit a unique solution
\begin{equation*}
	\begin{array}{rcl}
		(\hat{u},\,\hat{v},\,\hat{w}) & \in & H^1_p(J,\,L_p(\bR^n_{[z > 0]}))^n \cap L_p(J,\,H^2_p(\bR^n_{[z > 0]}))^n, \\[0.5em]
		                      \hat{p} & \in & L_p(J,\,\hat{H}^1_p(\bR^n_{[z > 0]}))
	\end{array}
\end{equation*}
thanks to \cite[Theorem~2.3]{Bothe-Koehne-Pruess:Energy-Preserving-Boundary-Conditions},
where $\hat{u}$, $\hat{w}$, and $\hat{p}$ are even while $\hat{v}$ is odd w.\,r.\,t.\ $y$ by construction.
Hence, the restrictions $u$, $v$, $w$, and $p$ of $\hat{u}$, $\hat{v}$, $\hat{w}$, and $\hat{p}$ to $\bR^n_{+\!\!\!+}$
belong to the desired regularity class and solve
\begin{equation*}
	\begin{array}{rclll}
		                     \partial_t u - \frac{1}{\textrm{Re}} \Delta u +   \nabla_x p & = & 0                          & \quad \mbox{in} & J \times \bR^n_{+\!\!\!+},            \\[0.5em]
		                     \partial_t v - \frac{1}{\textrm{Re}} \Delta v + \partial_y p & = & 0                          & \quad \mbox{in} & J \times \bR^n_{+\!\!\!+},            \\[0.5em]
		                     \partial_t w - \frac{1}{\textrm{Re}} \Delta w + \partial_z p & = & 0                          & \quad \mbox{in} & J \times \bR^n_{+\!\!\!+},            \\[0.5em]
		                                    \mbox{div}_x\,u + \partial_y v + \partial_z w & = & g                          & \quad \mbox{in} & J \times \bR^n_{+\!\!\!+},            \\[0.5em]
		  - \frac{1}{\textrm{Re}} [\partial_y u]_y - \frac{1}{\textrm{Re}} \nabla_x [v]_y & = & 0                          & \quad \mbox{on} & J \times \partial_y \bR^n_{+\!\!\!+}, \\[0.5em]
		                                                                            [v]_y & = & 0                          & \quad \mbox{on} & J \times \partial_y \bR^n_{+\!\!\!+}, \\[0.5em]
		- \frac{1}{\textrm{Re}} [\partial_y w]_y - \frac{1}{\textrm{Re}} \partial_z [v]_y & = & 0                          & \quad \mbox{on} & J \times \partial_y \bR^n_{+\!\!\!+}, \\[0.5em]
		                                                                            [u]_z & = & u^{\textrm{\upshape in}}_u & \quad \mbox{on} & J \times \partial_z \bR^n_{+\!\!\!+}, \\[0.5em]
		                                                                            [v]_z & = & u^{\textrm{\upshape in}}_v & \quad \mbox{on} & J \times \partial_z \bR^n_{+\!\!\!+}, \\[0.5em]
		                                                                            [w]_z & = & 0                          & \quad \mbox{on} & J \times \partial_z \bR^n_{+\!\!\!+}, \\[0.5em]
		                                             u(0) = 0, \quad v(0) = 0, \quad w(0) & = & 0                          & \quad \mbox{in} & \bR^n_{+\!\!\!+},
	\end{array}
\end{equation*}
which is the reduced form of problem \eqnref{Wedge-Problem} after Steps~1, 2, and 3.
This completes the proof of \Thmref{Linear-Wedge-Inflow}.

\subsection{Combination of Dynamic Outflow/Navier Conditions}
In order to prove \Thmref{Linear-Wedge-Non-Reflecting},
in addition to the assumptions and the notation introduced at the beginning of this section,
we assume $v^{\textrm{out}} = V \nu$ with $\alpha V + \frac{2}{\textrm{Re}} > \alpha V + \frac{1}{\textrm{Re}} > 0$.
We have to study the model problem
\begin{equation}
	\eqnlabel{Wedge-Problem-NR}
	\begin{array}{rclll}
		                     \partial_t u - \frac{1}{\textrm{Re}} \Delta u +   \nabla_x p & = & f_u                          & \quad \mbox{in} & J \times \bR^n_{+\!\!\!+},            \\[0.5em]
		                     \partial_t v - \frac{1}{\textrm{Re}} \Delta v + \partial_y p & = & f_v                          & \quad \mbox{in} & J \times \bR^n_{+\!\!\!+},            \\[0.5em]
		                     \partial_t w - \frac{1}{\textrm{Re}} \Delta w + \partial_z p & = & f_w                          & \quad \mbox{in} & J \times \bR^n_{+\!\!\!+},            \\[0.5em]
		                                    \mbox{div}_x\,u + \partial_y v + \partial_z w & = & g                            & \quad \mbox{in} & J \times \bR^n_{+\!\!\!+},            \\[0.5em]
		  - \frac{1}{\textrm{Re}} [\partial_y u]_y - \frac{1}{\textrm{Re}} \nabla_x [v]_y & = & h^{\textrm{\upshape wall}}_u & \quad \mbox{on} & J \times \partial_y \bR^n_{+\!\!\!+}, \\[0.5em]
		                                                                            [v]_y & = & h^{\textrm{\upshape wall}}_v & \quad \mbox{on} & J \times \partial_y \bR^n_{+\!\!\!+}, \\[0.5em]
		- \frac{1}{\textrm{Re}} [\partial_y w]_y - \frac{1}{\textrm{Re}} \partial_z [v]_y & = & h^{\textrm{\upshape wall}}_w & \quad \mbox{on} & J \times \partial_y \bR^n_{+\!\!\!+}, \\[0.5em]
		                                         u(0) = u_0, \quad v(0) = v_0, \quad w(0) & = & w_0                          & \quad \mbox{in} & \bR^n_{+\!\!\!+}
	\end{array}
\end{equation}
together with a dynamic outflow boundary condition in tangential directions
\begin{subequations}
\begin{equation}
	\eqnlabel{Wedge-Problem-TNR}
	\begin{array}{rclll}
		  \alpha \partial_t [u]_z - (\alpha V + \frac{1}{\textrm{Re}}) [\partial_z u]_z - \frac{1}{\textrm{Re}} \nabla_x [w]_z & = & h_u & \quad \mbox{on} & J \times \partial_z \bR^n_{+\!\!\!+}, \\[0.5em]
		\alpha \partial_t [v]_z - (\alpha V + \frac{1}{\textrm{Re}}) [\partial_z v]_z - \frac{1}{\textrm{Re}} \partial_y [w]_z & = & h_v & \quad \mbox{on} & J \times \partial_z \bR^n_{+\!\!\!+}, \\[0.5em]
		                                                                                                                 [w]_z & = & h_w & \quad \mbox{on} & J \times \partial_z \bR^n_{+\!\!\!+},
	\end{array}
\end{equation}
or a dynamic outflow boundary condition in normal direction
\begin{equation}
	\eqnlabel{Wedge-Problem-NNR}
	\begin{array}{rclll}
		                                                                                [u]_z & = & h_u & \quad \mbox{on} & J \times \partial_z \bR^n_{+\!\!\!+}, \\[0.5em]
		                                                                                [v]_z & = & h_v & \quad \mbox{on} & J \times \partial_z \bR^n_{+\!\!\!+}, \\[0.5em]
		\alpha \partial_t [w]_z - (\alpha V + \frac{2}{\textrm{Re}}) [\partial_z w]_z + [p]_z & = & h_w & \quad \mbox{on} & J \times \partial_z \bR^n_{+\!\!\!+},
	\end{array}
\end{equation}
or a full dynamic outflow boundary condition
\begin{equation}
	\eqnlabel{Wedge-Problem-FNR}
	\begin{array}{rclll}
		  \alpha \partial_t [u]_z - (\alpha V + \frac{1}{\textrm{Re}}) [\partial_z u]_z - \frac{1}{\textrm{Re}} \nabla_x [w]_z & = & h_u & \quad \mbox{on} & J \times \partial_z \bR^n_{+\!\!\!+}, \\[0.5em]
		\alpha \partial_t [v]_z - (\alpha V + \frac{1}{\textrm{Re}}) [\partial_z v]_z - \frac{1}{\textrm{Re}} \partial_y [w]_z & = & h_v & \quad \mbox{on} & J \times \partial_z \bR^n_{+\!\!\!+}, \\[0.5em]
		                                 \alpha \partial_t [w]_z - (\alpha V + \frac{2}{\textrm{Re}}) [\partial_z w]_z + [p]_z & = & h_w & \quad \mbox{on} & J \times \partial_z \bR^n_{+\!\!\!+},
	\end{array}
\end{equation}
\end{subequations}
where the data $f = (f_u,\,f_v,\,f_w)$, $g$, $h = (h_u,\,h_v,\,h_w)$,
$h^{\textrm{\upshape wall}} = (h^{\textrm{\upshape wall}}_u,\,h^{\textrm{\upshape wall}}_v,\,h^{\textrm{\upshape wall}}_w)$, and
$(u_0,\,v_0,\,w_0)$ are subject to the regularity/compatibility conditions stated in \Thmref{Linear-Wedge-Non-Reflecting}, i.\,e.
\begin{itemize}
	\item $f \in L_p(J \times \bR^n_{+\!\!\!+})^n$,
	\item $g \in H^{1/2}_p(J,\,L_p(\bR^n_{+\!\!\!+})) \cap L_p(J,\,H^1_p(\bR^n_{+\!\!\!+}))$,
	\item $(h_u,\,h_v) \in W^{\kappa/2 - 1/2p}_p(J,\,L_p(\partial_z \bR^n_{+\!\!\!+}))^{n - 1} \cap L_p(J,\,W^{\kappa - 1/p}_p(\partial_z \bR^n_{+\!\!\!+}))^{n - 1}$,
	\item $h_w \in L_p(J,\,W^{1 - 1/p}_p(\partial_z \bR^n_{+\!\!\!+}))$,
	\item $(h^{\textrm{\upshape wall}}_u,\,h^{\textrm{\upshape wall}}_w) \in W^{1/2 - 1/2p}_p(J,\,L_p(\partial_y \bR^n_{+\!\!\!+}))^{n - 1} \cap L_p(J,\,W^{1 - 1/p}_p(\partial_y \bR^n_{+\!\!\!+}))^{n - 1}$,
	\item $h^{\textrm{\upshape wall}}_v \in W^{1 - 1/2p}_p(J,\,L_p(\partial_y \bR^n_{+\!\!\!+})) \cap L_p(J,\,W^{2 - 1/p}_p(\partial_y \bR^n_{+\!\!\!+}))$,
	\item $F(g,\,- h^{\textrm{\upshape wall}}_v,\,- \eta) \in H^1_p(J,\,\hat{H}^{-1}_p(\bR^n_{+\!\!\!+}))$,
	\item $(u_0,\,v_0,\,w_0) \in W^{2 - 2/p}_p(\bR^n_{+\!\!\!+})^n$,
	\item $([u_0]_z,\,[v_0]_z) \in W^{2 - (\kappa + 1)/p}_p(\partial_z \bR^n_{+\!\!\!+})^n$
\end{itemize}
with $\kappa = 1$ for conditions \eqnref{Wedge-Problem-TNR}, and \eqnref{Wedge-Problem-FNR}, while $\kappa = 2$ for condition \eqnref{Wedge-Problem-NNR}.
Furthermore, $\eta = h_w \in W^{1 - 1/2p}_p(J,\,L_p(\partial_z \bR^n_{+\!\!\!+})) \cap L_p(J,\,W^{2 - 1/p}_p(\partial_z \bR^n_{+\!\!\!+}))$ for condition \eqnref{Wedge-Problem-TNR},
while otherwise $\eta \in H^1_p(J,\,W^{1 - 1/p}_p(\partial_z \bR^n_{+\!\!\!+})) \cap L_p(J,\,W^{2 - 1/p}_p(\partial_z \bR^n_{+\!\!\!+}))$ is given by assumption.
In all cases the compatibility conditions \eqnref{Wedge-Compatibility-1}, \eqnref{Wedge-Compatibility-2a} are satisfied,
and, due to \eqnref*{Non-Reflecting-Wall-Compatibility-Tangential}, \eqnref*{Non-Reflecting-Wall-Compatibility-Normal}, or \eqnref*{Non-Reflecting-Wall-Compatibility-Full} we have
\begin{subequations}
\begin{equation}
	\eqnlabel{Wedge-Compatibility-6}
	\begin{array}{rclll}
		 - \frac{1}{\textrm{Re}} [\partial_y \xi_u]_y - \frac{1}{\textrm{Re}} \nabla_x [h^{\textrm{\upshape wall}}_v]_z & = & [h^{\textrm{\upshape wall}}_u]_z & \quad \mbox{on} & J \times \cE, \\[0.5em]
		                                                                                                      [\xi_v]_y & = & [h^{\textrm{\upshape wall}}_v]_z & \quad \mbox{on} & J \times \cE, \\[0.5em]
		- \frac{1}{\textrm{Re}} [\partial_y \eta]_y - \frac{1}{\textrm{Re}} [\partial_z h^{\textrm{\upshape wall}}_v]_z & = & [h^{\textrm{\upshape wall}}_w]_z & \quad \mbox{on} & J \times \cE
	\end{array}
\end{equation}
for $p \geq 2$ with $(\xi_u,\,\xi_v) = (h_u,\,h_v)$ for condition \eqnref{Wedge-Problem-NNR} while otherwise
\begin{equation*}
	(\xi_u,\,\xi_v) \in W^{3/2 - 1/2p}_p(J,\,L_p(\partial_z \bR^n_{+\!\!\!+}))^{n - 1} \cap H^1_p(J,\,W^{1 - 1/p}_p(\partial_z \bR^n_{+\!\!\!+}))^{n - 1} \cap L_p(J,\,W^{2 - 2/p}_p(\partial_z \bR^n_{+\!\!\!+}))^{n - 1}
\end{equation*}
is given by assumption such that we always have
\begin{equation}
	\eqnlabel{Wedge-Compatibility-2c}
	\begin{array}{rclllll}
		[u_0]_z & = & \xi_u(0) & \quad \mbox{on} & \partial_z \bR^n_{+\!\!\!+}, & \quad \mbox{if} & p > \frac{3}{2}, \\[0.5em]
		[v_0]_z & = & \xi_v(0) & \quad \mbox{on} & \partial_z \bR^n_{+\!\!\!+}, & \quad \mbox{if} & p > \frac{3}{2}, \\[0.5em]
		[w_0]_z & = & \eta(0)  & \quad \mbox{on} & \partial_z \bR^n_{+\!\!\!+}, & \quad \mbox{if} & p > \frac{3}{2}.
	\end{array}
\end{equation}
In case of the boundary conditions \eqnref{Wedge-Problem-TNR} and \eqnref{Wedge-Problem-FNR} the compatibility conditions
\eqnref*{Non-Reflecting-Wall-Compatibility-Tangential} and \eqnref*{Non-Reflecting-Wall-Compatibility-Full} imply
\begin{equation}
	\eqnlabel{Wedge-Compatibility-7}
	\alpha \partial_t [h^{\textrm{\upshape wall}}_v]_z - \alpha V [\partial_z h^{\textrm{\upshape wall}}_v]_z + [h^{\textrm{\upshape wall}}_w]_z = [h_v]_y \qquad \mbox{on} \ J \times \cE.
\end{equation}
\end{subequations}
The construction of solutions to this problems requires two steps.

\subsection*{Step 1}
We first show that we can w.\,l.\,o.\,g.\ assume $f = g = h^{\textrm{\upshape wall}} = \xi = \eta = 0$
together with $u_0 = v_0 = w_0 = 0$ as well as $h_u = h_v = 0$ in case condition \eqnref{Wedge-Problem-NNR} is applied,
and $h_w = 0$ in case condition \eqnref{Wedge-Problem-TNR} is applied.
Indeed, thanks to \eqnref{Wedge-Compatibility-1}, \eqnref{Wedge-Compatibility-2a}, \eqnref{Wedge-Compatibility-6} and \eqnref{Wedge-Compatibility-2c}
all necessary regularity and compatibility conditions are satisfied in order to apply \Thmref{Linear-Wedge-Inflow} to solve
\begin{equation*}
	\begin{array}{rclll}
		                     \partial_t u - \frac{1}{\textrm{Re}} \Delta u +   \nabla_x p & = & f_u                          & \quad \mbox{in} & J \times \bR^n_{+\!\!\!+},            \\[0.5em]
		                     \partial_t v - \frac{1}{\textrm{Re}} \Delta v + \partial_y p & = & f_v                          & \quad \mbox{in} & J \times \bR^n_{+\!\!\!+},            \\[0.5em]
		                     \partial_t w - \frac{1}{\textrm{Re}} \Delta w + \partial_z p & = & f_w                          & \quad \mbox{in} & J \times \bR^n_{+\!\!\!+},            \\[0.5em]
		                                    \mbox{div}_x\,u + \partial_y v + \partial_z w & = & g                            & \quad \mbox{in} & J \times \bR^n_{+\!\!\!+},            \\[0.5em]
		  - \frac{1}{\textrm{Re}} [\partial_y u]_y - \frac{1}{\textrm{Re}} \nabla_x [v]_y & = & h^{\textrm{\upshape wall}}_u & \quad \mbox{on} & J \times \partial_y \bR^n_{+\!\!\!+}, \\[0.5em]
		                                                                            [v]_y & = & h^{\textrm{\upshape wall}}_v & \quad \mbox{on} & J \times \partial_y \bR^n_{+\!\!\!+}, \\[0.5em]
		- \frac{1}{\textrm{Re}} [\partial_y w]_y - \frac{1}{\textrm{Re}} \partial_z [v]_y & = & h^{\textrm{\upshape wall}}_w & \quad \mbox{on} & J \times \partial_y \bR^n_{+\!\!\!+}, \\[0.5em]
		                                                                            [u]_z & = & \xi_u                        & \quad \mbox{on} & J \times \partial_z \bR^n_{+\!\!\!+}, \\[0.5em]
		                                                                            [v]_z & = & \xi_v                        & \quad \mbox{on} & J \times \partial_z \bR^n_{+\!\!\!+}, \\[0.5em]
		                                                                            [w]_z & = & \eta                         & \quad \mbox{on} & J \times \partial_z \bR^n_{+\!\!\!+}, \\[0.5em]
		                                         u(0) = u_0, \quad v(0) = v_0, \quad w(0) & = & w_0                          & \quad \mbox{in} & \bR^n_{+\!\!\!+}.
	\end{array}
\end{equation*}
Then $(u,\,v,\,w)$, and $p$ belong to the desired regularity class,
except that the pressure trace only satisfies $[p]_z \in L_p(J,\,\dot{W}^1_p(\partial_z \bR^n_{+\!\!\!+}))$.
This shows that we can assume $f = g = h^{\textrm{\upshape wall}} = \xi = \eta = 0$
together with $u_0 = v_0 = w_0 = 0$ as well as $h_u = h_v = 0$ in case condition \eqnref{Wedge-Problem-NNR} is applied,
and $h_w = 0$ in case condition \eqnref{Wedge-Problem-TNR} is applied.
Note that this reduction of the problem does not affect the regularity and compatibility assumptions of \Thmref{Linear-Wedge-Non-Reflecting},
except for a potential lower regularity of $h_w$ that stems from the potential lower regularity of $[p]_z$, i.\,e.\ we may now assume
\begin{itemize}
	\item $(h_u,\,h_v) \in W^{1/2 - 1/2p}_p(J,\,L_p(\partial_z \bR^n_{+\!\!\!+}))^{n - 1} \cap L_p(J,\,W^{1 - 1/p}_p(\partial_z \bR^n_{+\!\!\!+}))^{n - 1}$,
	\item $h_w \in L_p(J,\,\dot{W}^{1 - 1/p}_p(\partial_z \bR^n_{+\!\!\!+}))$
\end{itemize}
as well as the compatibility condition
\begin{equation}
	\eqnlabel{Wedge-Compatibility-8}
	[h_v]_y = 0 \qquad \mbox{on} \ J \times \cE,
\end{equation}
which stems from \eqnref{Wedge-Compatibility-7},
with $h_u = h_v = 0$ in case of condition \eqnref{Wedge-Problem-NNR}, and $h_w = 0$ in case of condition \eqnref{Wedge-Problem-TNR}.

\subsection*{Step 2}
Finally, we solve the reduced problem as obtained by Step~1.
To accomplish this we define
\begin{equation*}
	\begin{array}{rcrclcl}
		\hat{h}_u & := & E^+_y h_u & \in & W^{1/2 - 1/2p}_p(J,\,L_p(\partial \bR^n_{[z > 0]}))^{n - 2} & \cap & L_p(J,\,W^{1 - 1/p}_p(\partial \bR^n_{[z > 0]}))^{n - 2}, \\[0.5em]
		\hat{h}_v & := & E^-_y h_v & \in & W^{1/2 - 1/2p}_p(J,\,L_p(\partial \bR^n_{[z > 0]}))         & \cap & L_p(J,\,W^{1 - 1/p}_p(\partial \bR^n_{[z > 0]})),
	\end{array}
\end{equation*}
in case condition \eqnref{Wedge-Problem-TNR} or \eqnref{Wedge-Problem-FNR} is applied as well as
\begin{equation*}
	\hat{h}_w := E^+_y h_w \in L_p(J,\,\dot{W}^{1 - 1/p}_p(\partial \bR^n_{[z > 0]}))
\end{equation*}
in case condition \eqnref{Wedge-Problem-NNR} or \eqnref{Wedge-Problem-FNR} is applied.
Note that $[h_v]_y = 0$ thanks to the compatibility condition \eqnref{Wedge-Compatibility-8}.
This ensures that the odd extension of $h_v$ w.\,r.\,t.\ $y$ have the desired spatial regularity.
Now, the Stokes equations in the halfspace
\begin{equation*}
	\begin{array}{rclll}
		\partial_t \hat{u} - \frac{1}{\textrm{Re}} \Delta \hat{u} +   \nabla_x \hat{p} & = & 0 & \quad \mbox{in} & J \times \bR^n_{[z > 0]}, \\[0.5em]
		\partial_t \hat{v} - \frac{1}{\textrm{Re}} \Delta \hat{v} + \partial_y \hat{p} & = & 0 & \quad \mbox{in} & J \times \bR^n_{[z > 0]}, \\[0.5em]
		\partial_t \hat{w} - \frac{1}{\textrm{Re}} \Delta \hat{w} + \partial_z \hat{p} & = & 0 & \quad \mbox{in} & J \times \bR^n_{[z > 0]}, \\[0.5em]
		               \mbox{div}_x\,\hat{u} + \partial_y \hat{v} + \partial_z \hat{w} & = & 0 & \quad \mbox{in} & J \times \bR^n_{[z > 0]}, \\[0.5em]
		                        \hat{u}(0) = 0, \quad \hat{v}(0) = 0, \quad \hat{w}(0) & = & 0 & \quad \mbox{in} & \bR^n_{[z > 0]},
	\end{array}
\end{equation*}
together with the dynamic outflow boundary condition in tangential directions
\begin{equation*}
	\begin{array}{rclll}
		  \alpha \partial_t [\hat{u}]_z - (\alpha V + \frac{1}{\textrm{Re}}) [\partial_z \hat{u}]_z - \frac{1}{\textrm{Re}} \nabla_x [\hat{w}]_z & = & \hat{h}_u & \quad \mbox{on} & J \times \partial \bR^n_{[z > 0]}, \\[0.5em]
		\alpha \partial_t [\hat{v}]_z - (\alpha V + \frac{1}{\textrm{Re}}) [\partial_z \hat{v}]_z - \frac{1}{\textrm{Re}} \partial_y [\hat{w}]_z & = & \hat{h}_v & \quad \mbox{on} & J \times \partial \bR^n_{[z > 0]}, \\[0.5em]
		                                                                                                                             [\hat{w}]_z & = & 0         & \quad \mbox{on} & J \times \partial \bR^n_{[z > 0]},
	\end{array}
\end{equation*}
the dynamic outflow boundary condition in normal direction
\begin{equation*}
	\begin{array}{rclll}
		                                                                                            [\hat{u}]_z & = & 0         & \quad \mbox{on} & J \times \partial \bR^n_{[z > 0]}, \\[0.5em]
		                                                                                            [\hat{v}]_z & = & 0         & \quad \mbox{on} & J \times \partial \bR^n_{[z > 0]}, \\[0.5em]
		\alpha \partial_t [\hat{w}]_z - (\alpha V + \frac{2}{\textrm{Re}}) [\partial_z \hat{w}]_z + [\hat{p}]_z & = & \hat{h}_w & \quad \mbox{on} & J \times \partial \bR^n_{[z > 0]},
	\end{array}
\end{equation*}
the full dynamic outflow boundary condition
\begin{equation*}
	\begin{array}{rclll}
		  \alpha \partial_t [\hat{u}]_z - (\alpha V + \frac{1}{\textrm{Re}}) [\partial_z \hat{u}]_z - \frac{1}{\textrm{Re}} \nabla_x [\hat{w}]_z & = & \hat{h}_u & \quad \mbox{on} & J \times \partial \bR^n_{[z > 0]}, \\[0.5em]
		\alpha \partial_t [\hat{v}]_z - (\alpha V + \frac{1}{\textrm{Re}}) [\partial_z \hat{v}]_z - \frac{1}{\textrm{Re}} \partial_y [\hat{w}]_z & = & \hat{h}_v & \quad \mbox{on} & J \times \partial \bR^n_{[z > 0]}, \\[0.5em]
		                                 \alpha \partial_t [\hat{w}]_z - (\alpha V + \frac{2}{\textrm{Re}}) [\partial_z \hat{w}]_z + [\hat{p}]_z & = & \hat{h}_w & \quad \mbox{on} & J \times \partial \bR^n_{[z > 0]},
	\end{array}
\end{equation*}
respectively, admit a unique solution
\begin{equation*}
	\begin{array}{rcl}
		(\hat{u},\,\hat{v},\,\hat{w}) & \in & H^1_p(J,\,L_p(\bR^n_{[z > 0]}))^n \cap L_p(J,\,H^2_p(\bR^n_{[z > 0]}))^n, \\[0.5em]
		                      \hat{p} & \in & L_p(J,\,\hat{H}^1_p(\bR^n_{[z > 0]}))
	\end{array}
\end{equation*}
with increased regularity of $[\hat{u}]_z$, $[\hat{v}]_z$, and $[\hat{w}]_z$ according to the dynamic boundary condition
thanks to \Thmref{Linear-Halfspace} and \Remref{Linear-Halfspace}~(a),
where $\hat{u}$, $\hat{w}$, and $\hat{p}$ are even while $\hat{v}$ is odd w.\,r.\,t.\ $y$ by construction.
Hence, the restrictions $u$, $v$, $w$, and $p$ of $\hat{u}$, $\hat{v}$, $\hat{w}$, and $\hat{p}$ to $\bR^n_{+\!\!\!+}$
belong to the desired regularity class and solve
\begin{equation*}
	\begin{array}{rclll}
		                     \partial_t u - \frac{1}{\textrm{Re}} \Delta u +   \nabla_x p & = & 0 & \quad \mbox{in} & J \times \bR^n_{+\!\!\!+},            \\[0.5em]
		                     \partial_t v - \frac{1}{\textrm{Re}} \Delta v + \partial_y p & = & 0 & \quad \mbox{in} & J \times \bR^n_{+\!\!\!+},            \\[0.5em]
		                     \partial_t w - \frac{1}{\textrm{Re}} \Delta w + \partial_z p & = & 0 & \quad \mbox{in} & J \times \bR^n_{+\!\!\!+},            \\[0.5em]
		                                    \mbox{div}_x\,u + \partial_y v + \partial_z w & = & 0 & \quad \mbox{in} & J \times \bR^n_{+\!\!\!+},            \\[0.5em]
		  - \frac{1}{\textrm{Re}} [\partial_y u]_y - \frac{1}{\textrm{Re}} \nabla_x [v]_y & = & 0 & \quad \mbox{on} & J \times \partial_y \bR^n_{+\!\!\!+}, \\[0.5em]
		                                                                            [v]_y & = & 0 & \quad \mbox{on} & J \times \partial_y \bR^n_{+\!\!\!+}, \\[0.5em]
		- \frac{1}{\textrm{Re}} [\partial_y w]_y - \frac{1}{\textrm{Re}} \partial_z [v]_y & = & 0 & \quad \mbox{on} & J \times \partial_y \bR^n_{+\!\!\!+}, \\[0.5em]
		                                             u(0) = 0, \quad v(0) = 0, \quad w(0) & = & 0 & \quad \mbox{in} & \bR^n_{+\!\!\!+},
	\end{array}
\end{equation*}
together with the dynamic outflow boundary condition in tangential directions
\begin{equation*}
	\begin{array}{rclll}
		  \alpha \partial_t [u]_z - (\alpha V + \frac{1}{\textrm{Re}}) [\partial_z u]_z - \frac{1}{\textrm{Re}} \nabla_x [w]_z & = & h_u & \quad \mbox{on} & J \times \partial_z \bR^n_{+\!\!\!+}, \\[0.5em]
		\alpha \partial_t [v]_z - (\alpha V + \frac{1}{\textrm{Re}}) [\partial_z v]_z - \frac{1}{\textrm{Re}} \partial_y [w]_z & = & h_v & \quad \mbox{on} & J \times \partial_z \bR^n_{+\!\!\!+}, \\[0.5em]
		                                                                                                                 [w]_z & = & 0   & \quad \mbox{on} & J \times \partial_z \bR^n_{+\!\!\!+},
	\end{array}
\end{equation*}
the dynamic outflow boundary condition in normal direction
\begin{equation*}
	\begin{array}{rclll}
		                                                                                [u]_z & = & 0   & \quad \mbox{on} & J \times \partial_z \bR^n_{+\!\!\!+}, \\[0.5em]
		                                                                                [v]_z & = & 0   & \quad \mbox{on} & J \times \partial_z \bR^n_{+\!\!\!+}, \\[0.5em]
		\alpha \partial_t [w]_z - (\alpha V + \frac{2}{\textrm{Re}}) [\partial_z w]_z + [p]_z & = & h_w & \quad \mbox{on} & J \times \partial_z \bR^n_{+\!\!\!+},
	\end{array}
\end{equation*}
the full dynamic outflow boundary condition
\begin{equation*}
	\begin{array}{rclll}
		  \alpha \partial_t [u]_z - (\alpha V + \frac{1}{\textrm{Re}}) [\partial_z u]_z - \frac{1}{\textrm{Re}} \nabla_x [w]_z & = & h_u & \quad \mbox{on} & J \times \partial_z \bR^n_{+\!\!\!+}, \\[0.5em]
		\alpha \partial_t [v]_z - (\alpha V + \frac{1}{\textrm{Re}}) [\partial_z v]_z - \frac{1}{\textrm{Re}} \partial_y [w]_z & = & h_v & \quad \mbox{on} & J \times \partial_z \bR^n_{+\!\!\!+}, \\[0.5em]
		                                 \alpha \partial_t [w]_z - (\alpha V + \frac{2}{\textrm{Re}}) [\partial_z w]_z + [p]_z & = & h_w & \quad \mbox{on} & J \times \partial_z \bR^n_{+\!\!\!+},
	\end{array}
\end{equation*}
respectively, which is the reduced form of problem \eqnref{Wedge-Problem-NR}
together with \eqnref{Wedge-Problem-TNR}, \eqnref{Wedge-Problem-NNR}, \eqnref{Wedge-Problem-FNR}, respectively, after Step~1.
This completes the proof of \Thmref{Linear-Wedge-Non-Reflecting}.

\appendix
\section*{Appendix A \\ Parabolic Equations subject to Dynamic Boundary conditions}
\seclabel{Appendix-Parabolic}\renewcommand{\thesection}{A}
In this appendix we collect some useful results on parabolic equations subject to dynamic boundary conditions.
The first result is essentially contained in \cite{Denk-Pruess-Zacher:Dynamic-Boundary-Conditions}.
\begin{proposition}
\proplabel{Parabolic-DBC-Standard}
Let $0 < a \leq \infty$, let $J := (0,\,a)$, let $\epsilon \geq 0$ with $\epsilon > 0$, if $a = \infty$.
Let $\Omega := \bR^n_+$ with $\Gamma = \partial \Omega$.
Let $1 < p < \infty$ with $p \neq \frac{3}{2},\,3$ and let $\alpha,\,\beta,\,\mu > 0$.
Then for every
\begin{equation*}
	f \in L_p(J \times \Omega), \quad h \in W^{1/2 - 1/2p}_p(J,\,L_p(\Gamma)) \cap L_p(J,\,W^{1 - 1/p}_p(\Gamma)), \quad u_0 \in W^{2 - 2/p}_p(\Omega)
\end{equation*}
with $[u_0]_\Gamma \in W^{2 - 2/p}_p(\Gamma)$ the parabolic problem
\begin{equation*}
	\begin{array}{rclll}
		                      \epsilon u + \partial_t u - \mu \Delta u & = & f   & \quad \mbox{in} & J \times \Omega, \\[0.5em]
		\alpha \epsilon u + \alpha \partial_t u + \beta \partial_\nu u & = & h   & \quad \mbox{on} & J \times \Gamma, \\[0.5em]
		                                                          u(0) & = & u_0 & \quad \mbox{in} & \Omega
	\end{array}
\end{equation*}
admits a unique maximal regular solution
\begin{equation*}
	\begin{array}{rcl}
		         u & \in & H^1_p(J,\,L_p(\Omega)) \cap L_p(J,\,H^2_p(\Omega)), \\[0.5em]
		[u]_\Gamma & \in & W^{3/2 - 1/2p}_p(J,\,L_p(\Omega)) \cap H^1_p(J,\,W^{1 - 1/p}_p(\Gamma)) \cap L_p(J,\,W^{2 - 1/p}_p(\Gamma)).
	\end{array}
\end{equation*}
The solutions depend continuously on the data.
\end{proposition}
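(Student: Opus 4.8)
The plan is to identify the stated problem as a special instance of the general $L_p$-maximal regularity theory for parabolic equations subject to dynamic boundary conditions developed in \cite{Denk-Pruess-Zacher:Dynamic-Boundary-Conditions}, and to verify its two structural hypotheses---parabolicity of the interior operator and the Lopatinskii--Shapiro condition for the dynamic boundary operator---for the concrete symbols at hand. Since $\Omega = \bR^n_+$ is already a halfspace, no geometric localization is required, and the analysis reduces entirely to constant-coefficient symbols obtained via a Laplace transform in time, with co-variable $\lambda \in \Sigma_{\pi - \theta}$ for $0 < \theta < \frac{\pi}{2}$, and a Fourier transform in the tangential variable $x \in \bR^{n - 1}$, with co-variable $\xi$. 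First I would check parabolicity of the interior operator $\epsilon + \partial_t - \mu \Delta$: writing $\lambda_\epsilon := \epsilon + \lambda$, its full symbol is $\lambda_\epsilon + \mu |\xi|^2$, which for $\mu > 0$ remains in a proper subsector of the right halfplane and hence admits the required bounded $\cH^\infty$-calculus, exactly as for the operators $G$ and $D$ appearing in the halfspace computations above. The bounded, exponentially decaying solution of the transformed interior ODE is $\hat u(\lambda, \xi, y) = \hat\tau(\lambda, \xi)\,e^{- \omega y}$ with $\omega := \sqrt{\lambda_\epsilon / \mu + |\xi|^2}$ and $\operatorname{Re} \omega > 0$, so that $[\partial_\nu \hat u]_y = \omega \hat\tau$ (recall $\partial_\nu = - \partial_y$ on $\Gamma = \partial \bR^n_+$).

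The heart of the matter is the Lopatinskii--Shapiro condition. Inserting this fundamental solution into the transformed boundary condition $\alpha \lambda_\epsilon [\hat u]_y + \beta [\partial_\nu \hat u]_y = \hat h$ produces the scalar boundary symbol
\[
	\ell(\lambda, |\xi|) := \alpha \lambda_\epsilon + \beta \omega,
\]
together with the solution operator $\hat h \mapsto \hat\tau = \ell^{-1} \hat h$. I would then show that $\ell$ is uniformly bounded away from the origin, with the natural parabolic homogeneity $|\ell| \gtrsim |\lambda_\epsilon| + |\omega|$, on the relevant range of co-variables. Indeed, since $\alpha, \beta, \mu > 0$, the argument of $\lambda_\epsilon$ lies in $(- (\pi - \theta), \pi - \theta)$ while $\operatorname{Re} \omega > 0$ forces $\arg \omega \in (- \frac{\pi}{2}, \frac{\pi}{2})$; the difference of these arguments is strictly below $\pi$, so the same elementary argument-addition estimate used in Step~2.2 of \Subsecref{Linear-Halfspace-Proof-Tangential} shows that $\arg \ell$ stays strictly inside $(- \pi, \pi)$, uniformly. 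This is precisely the Lopatinskii--Shapiro condition required by \cite{Denk-Pruess-Zacher:Dynamic-Boundary-Conditions}.

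With parabolicity and the Lopatinskii--Shapiro condition verified, the general theorem of \cite{Denk-Pruess-Zacher:Dynamic-Boundary-Conditions} applies and yields a unique solution in the asserted maximal regularity class: the joint $\cH^\infty$-calculus of $(\partial_t, \sqrt{- \Delta})$ combined with the bounded holomorphic symbol $\ell^{-1}$ gives, through the operator-valued Fourier multiplier theorem, both the interior regularity $u \in H^1_p(J, L_p(\Omega)) \cap L_p(J, H^2_p(\Omega))$ and the improved boundary regularity of $[u]_\Gamma$, the latter being the regularity gain characteristic of a first-order-in-time dynamic condition. The requirement $[u_0]_\Gamma \in W^{2 - 2/p}_p(\Gamma)$ is exactly the temporal trace space at $t = 0$ of the boundary class $W^{3/2 - 1/2p}_p(J, L_p(\Gamma)) \cap L_p(J, W^{2 - 1/p}_p(\Gamma))$, which renders the initial data consistent; continuous dependence is immediate from the boundedness of the solution operator. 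The passage between the shifted problem with $\epsilon > 0$ on $J = \bR_+$ and the unshifted problem with $\epsilon = 0$ on a finite interval is standard.

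I expect the Lopatinskii--Shapiro verification to be the only genuine obstacle; the remaining work is bookkeeping to match the symbol and trace classes to the hypotheses of the cited theorem. The one subtlety is to confirm that $\arg \ell$ stays bounded away from $\pm \pi$ \emph{uniformly} in the regime where $\lambda_\epsilon$ has argument close to $\pm(\pi - \theta)$: here one uses that $\omega$ contributes an argument of the same sign but of roughly half the magnitude, so that the convex-combination bound on $\arg \ell$ cannot approach $\pm \pi$---the same careful argument-tracking as in the tangential halfspace case.
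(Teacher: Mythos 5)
Your proposal is correct and follows essentially the same route as the paper, which likewise disposes of this proposition by observing that it is exactly the halfspace model problem treated in Section~4 of \cite{Denk-Pruess-Zacher:Dynamic-Boundary-Conditions} (cf.\ Theorem~2.1 and Example~3.1 there). The only difference is one of detail: the paper simply cites that reference, whereas you explicitly carry out the symbol computation $\ell(\lambda,|\xi|) = \alpha\lambda_\epsilon + \beta\omega$ and the Lopatinskii--Shapiro verification via the argument-addition estimate --- correctly, including the key observation that $\arg\lambda_\epsilon$ and $\arg\omega$ share the same sign so their difference stays below $\pi$ --- which is precisely the content of the cited model-problem analysis and mirrors the paper's own explicit computation in the proof of the companion low-regularity result.
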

Concerning the proof, first note that the problem fits into the framework of \cite[Theorem~2.1]{Denk-Pruess-Zacher:Dynamic-Boundary-Conditions};
cf.~also~\cite[Example~3.1]{Denk-Pruess-Zacher:Dynamic-Boundary-Conditions}.
Strictly speaking, \cite[Theorem~2.1]{Denk-Pruess-Zacher:Dynamic-Boundary-Conditions} is formulated
for the case that $\Gamma$ is a sufficiently smooth, compact manifold, $a < \infty$, and $\epsilon = 0$.
However, the proof given in \cite{Denk-Pruess-Zacher:Dynamic-Boundary-Conditions} employs a localization procedure,
where the problem in the halfspace $\Omega = \bR^n_+$ with $a = \infty$, and $\epsilon > 0$ is a model problem,
which is dealt with in \cite[Section~4]{Denk-Pruess-Zacher:Dynamic-Boundary-Conditions}.

Now, in order to obtain maximal regular solutions for the Stokes equations subject to a dynamic boundary condition involving the pressure
it is necessary to have a result at hand that requires a lower regularity for the right-hand side of the boundary condition.
\begin{proposition}
\proplabel{Parabolic-DBC-Low-Regularity}
Let $0 < a \leq \infty$, let $J := (0,\,a)$, let $\epsilon \geq 0$ with $\epsilon > 0$, if $a = \infty$.
Let $\Omega := \bR^n_+$ with $\Gamma = \partial \Omega$.
Let $1 < p < \infty$ with $p \neq \frac{3}{2},\,3$ and let $\alpha,\,\beta,\,\mu > 0$.
Then for every
\begin{equation*}
	f \in L_p(J \times \Omega), \qquad h \in L_p(J,\,W^{1 - 1/p}_p(\Gamma)), \qquad u_0 \in W^{2 - 2/p}_p(\Omega)
\end{equation*}
with $[u_0]_\Gamma \in W^{2 - 2/p}_p(\Gamma)$ the parabolic problem
\begin{equation*}
	\begin{array}{rclll}
		                      \epsilon u + \partial_t u - \mu \Delta u & = & f   & \quad \mbox{in} & J \times \Omega, \\[0.5em]
		\alpha \epsilon u + \alpha \partial_t u + \beta \partial_\nu u & = & h   & \quad \mbox{on} & J \times \Gamma, \\[0.5em]
		                                                          u(0) & = & u_0 & \quad \mbox{in} & \Omega
	\end{array}
\end{equation*}
admits a unique maximal regular solution
\begin{equation*}
	\begin{array}{rcl}
		         u & \in & H^1_p(J,\,L_p(\Omega)) \cap L_p(J,\,H^2_p(\Omega)), \\[0.5em]
		[u]_\Gamma & \in & H^1_p(J,\,W^{1 - 1/p}_p(\Gamma)) \cap L_p(J,\,W^{2 - 1/p}_p(\Gamma)).
	\end{array}
\end{equation*}
The solutions depend continuously on the data.
\end{proposition}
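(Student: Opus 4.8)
The plan is to deduce \Propref{Parabolic-DBC-Low-Regularity} from \Propref{Parabolic-DBC-Standard} together with the Laplace--Fourier symbol analysis already employed in \Secref{Halfspace}. First I would reduce to the case $f = 0$ and $u_0 = 0$: since $0$ trivially lies in the right-hand side class of \Propref{Parabolic-DBC-Standard}, that proposition yields a maximal regular $u_1$ solving the problem with the given $f,u_0$ but with homogeneous dynamic boundary condition. Then $u_2 := u - u_1$ must solve the same parabolic equation with $f = 0$, $u_0 = 0$ and with the \emph{unchanged} right-hand side $h$ in the boundary condition, because $u_1$ makes the boundary operator vanish. As in \Secref{Halfspace} I would next pass to the shifted problem on $\bR_+$ (admissible since $\epsilon > 0$ there, and maximal regularity of the shifted problem on $\bR_+$ is equivalent to maximal regularity on $J = (0,a)$), extend $h$ by zero, and apply the Laplace transform in $t$ (co-variable $\lambda$, $\lambda_\epsilon := \epsilon + \lambda$) and the Fourier transform in the tangential variable (co-variable $\xi$).

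The transformed interior equation is an ODE in the normal variable $y$ whose decaying fundamental solution is $e^{-\omega y}$ with $\omega = \sqrt{\lambda_\epsilon/\mu + |\xi|^2}$, so that the bounded solution satisfies $\widehat{[u]_\Gamma} = \hat b$ and $\widehat{\partial_\nu u}|_\Gamma = \omega\hat b$. Inserting this into the dynamic boundary condition $\alpha\lambda_\epsilon\,\hat b + \beta\omega\,\hat b = \hat h$ gives the boundary symbol
\[
	\hat b = \frac{1}{\alpha\lambda_\epsilon + \beta\omega}\,\hat h,
\]
i.\,e.\ $[u]_\Gamma = \Lambda h$ with $\Lambda := \mathrm{Op}\big((\alpha\lambda_\epsilon+\beta\omega)^{-1}\big)$. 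The crux of the proof is then to show that
\[
	\Lambda : L_p(\bR_+,\,W^{1-1/p}_p(\Gamma)) \longrightarrow H^1_p(\bR_+,\,W^{1-1/p}_p(\Gamma)) \cap L_p(\bR_+,\,W^{2-1/p}_p(\Gamma))
\]
is bounded, which is precisely the asserted gain of one full time derivative together with one spatial order.

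To establish this I would argue exactly as in Step~2.2 of \Subsecref{Linear-Halfspace-Proof-Tangential}, using that $G := \mathrm{Op}(\lambda) = \partial_t$ and $D := \mathrm{Op}(|\xi|) = \sqrt{-\mu\Delta_\Gamma}$ admit a joint $\cH^\infty(\Sigma_{\pi-\theta}\times\Sigma_{\theta/2})$-calculus by \cite[Corollary~2.10]{Denk-Saal-Seiler:Newton-Polygon} and \cite[Theorem~6.1]{Kalton-Weis:Operator-Sums}. The time gain follows from the fact that the symbol $\lambda/(\alpha\lambda_\epsilon+\beta\omega)$ is bounded and holomorphic on $\Sigma_{\pi-\theta}\times\Sigma_{\theta/2}$ (its reciprocal $(\alpha\lambda_\epsilon+\beta\omega)/\lambda$ stays uniformly away from the origin there, by the elementary argument-estimate used repeatedly in \Secref{Halfspace}), so that $\partial_t\Lambda$ is bounded on $L_p(\bR_+,W^{1-1/p}_p(\Gamma))$; this yields the $H^1_p$-regularity in time. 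For the spatial gain I would factor
\[
	\frac{1}{\alpha\lambda_\epsilon+\beta\omega} = \underbrace{\frac{\omega+|\xi|}{\alpha\lambda_\epsilon+\beta\omega}}_{\text{bounded, holomorphic}}\cdot\frac{1}{\omega+|\xi|},
\]
where the first factor is again a bounded holomorphic symbol of $(G,D)$ and the reference operator $\mathrm{Op}\big((\omega+|\xi|)^{-1}\big)$ maps $L_p(\bR_+,W^{1-1/p}_p(\Gamma))$ into $L_p(\bR_+,W^{2-1/p}_p(\Gamma))$ by the very mapping properties established in \cite[Section~4]{Bothe-Koehne-Pruess:Energy-Preserving-Boundary-Conditions}. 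I expect the verification of holomorphy and boundedness of these scalar symbols on the bisector product---in particular controlling the unbalanced interplay between the purely spatial ground norm and the parabolic scaling---to be the main technical obstacle, but it is handled by the same sector bookkeeping as in \Secref{Halfspace}.

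Finally I would reconstruct the full solution. The embedding $H^1_p(\bR_+,W^{1-1/p}_p(\Gamma)) \cap L_p(\bR_+,W^{2-1/p}_p(\Gamma)) \hookrightarrow W^{1-1/2p}_p(\bR_+,L_p(\Gamma)) \cap L_p(\bR_+,W^{2-1/p}_p(\Gamma))$ (mixed-derivative theorem: with interpolation parameter $1/2p$ one has $[W^{1-1/p}_p,W^{2-1/p}_p]_{1/2p} = W^{1-1/2p}_p \hookrightarrow L_p$) shows that $b = [u]_\Gamma$ lies in the Dirichlet trace space of the maximal regularity class. Solving the parabolic Dirichlet problem with data $b$, $f = 0$, $u_0 = 0$ then produces $u \in H^1_p(\bR_+,L_p(\Omega)) \cap L_p(\bR_+,H^2_p(\Omega))$; by construction this $u$ is exactly the decaying solution whose Neumann trace equals $\mathrm{Op}(\omega)\,b$, so the dynamic boundary condition holds. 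Causality of the Laplace transform gives $b(0) = 0$, hence compatibility with $u_0 = 0$, and uniqueness follows since $\alpha\lambda_\epsilon+\beta\omega$ is invertible on the relevant sector. The continuous dependence on the data is read off from the boundedness of all operators involved, after transporting everything back to the finite interval $J = (0,a)$.
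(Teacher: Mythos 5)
Your proposal follows essentially the same route as the paper's proof: reduction to $f=u_0=0$ via \Propref{Parabolic-DBC-Standard}, Laplace--Fourier transformation of the shifted problem, derivation of the boundary symbol $(\alpha\lambda_\epsilon+\beta\omega)^{-1}$, and verification that the symbols $\lambda\,(\alpha\lambda_\epsilon+\beta\omega)^{-1}$ and $|\xi|\,(\alpha\lambda_\epsilon+\beta\omega)^{-1}$ are bounded and holomorphic on $\Sigma_{\pi-\theta}\times\Sigma_{\theta/2}$ so that the joint $\cH^\infty$-calculus of $(\partial_t,\sqrt{-\Delta_\Gamma})$ yields the trace regularity. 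Your factorization through $(\omega+|\xi|)^{-1}$ and the explicit reconstruction of $u$ from its Dirichlet trace are only minor elaborations of what the paper states more tersely, so the argument is correct and matches the paper's.
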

\begin{proof}
It is sufficient to consider the case $a = \infty$, and $\epsilon > 0$.
Moreover, using \Propref{Parabolic-DBC-Standard} we may assume $f = u_0 = 0$ in the following.
A Laplace transformation w.\,r.\,t.\ time and a Fourier transformation w.\,r.\,t.\ the tangential spatial variables leads to
\begin{equation*}
	\begin{array}{rcllll}
		                       \omega^2 \hat u - \mu \partial_y^2 \hat u & = & 0,       & \quad \lambda \in \Sigma_{\pi - \theta}, & \xi \in \bR^{n - 1}, & y > 0, \\[0.5em]
		\alpha \lambda_\epsilon [\hat u]_y - \beta [\partial_y \hat u]_y & = & \hat{h}, & \quad \lambda \in \Sigma_{\pi - \theta}, & \xi \in \bR^{n - 1},
	\end{array}
\end{equation*}
where we employ the notation from \Subsecref{Linear-Halfspace-Proof-Tangential} with $\omega = \sqrt{\lambda_\epsilon + \mu |\xi|^2}$.
We immediately obtain $\hat{u}(\lambda,\,\xi,\,y) = \hat{\tau} e^{-(\omega / \sqrt{\mu}) y}$ for an unkown boundary value $\tau: \bR_+ \times \bR^{n - 1} \longrightarrow \bR$,
which has to be determined based on the boundary condition
\begin{equation*}
	\alpha \lambda_\epsilon [\hat u]_y - \beta [\partial_y \hat u]_y = \left( \alpha \lambda_\epsilon + \beta \frac{\omega}{\sqrt{\mu}} \right) \hat{\tau} = \hat{h}.
\end{equation*}
This implies that
\begin{equation*}
	[\hat{u}]_y = \hat{\tau} = \frac{\sqrt{\mu}}{\alpha \sqrt{\mu} \lambda_\epsilon + \beta \omega} \hat{h}
\end{equation*}
and since the symbols
\begin{equation*}
	(\lambda,\,z) \mapsto \frac{\sqrt{\mu} \lambda}{\alpha \sqrt{\mu} \lambda_\epsilon + \beta \omega(z)},\ \frac{\sqrt{\mu} z}{\alpha \sqrt{\mu} \lambda_\epsilon + \beta \omega(z)}:
		\Sigma_{\pi - \theta} \times \Sigma_{\theta / 2} \longrightarrow \bC
\end{equation*}
are bounded and holomorphic for $0 < \theta < \frac{\pi}{2}$,
we obtain the desired regularity of $[u]_y$ by the bounded $\cH^\infty$-calculus of the operators $\partial_t$ and $\sqrt{- \Delta_\Gamma}$,
cf.~\Subsecref{Linear-Halfspace-Proof-Tangential}.
\end{proof}

\section*{Acknowledgements}
T.\,K.~gratefully acknowledges financial support by the Deutsche For\-schungs\-gemeinschaft within the International Research Training Group ``Mathematical Fluid Dynamics'' (IRTG 1529).

\bibliographystyle{plain}
\bibliography{references}
\end{document}